\newtheorem{theorem}{Theorem}[section]
\newtheorem*{theorem-no-number}{Theorem}
\newtheorem{proposition}[theorem]{Proposition}
\newtheorem{lemma}[theorem]{Lemma}
\newtheorem{fact}[theorem]{Fact}
\newtheorem{problem}[theorem]{Problem}
\newtheorem{question}[theorem]{Question}
\newtheorem{conjecture}[theorem]{Conjecture}
\theoremstyle{definition}
\newtheorem{definition}[theorem]{Definition}
\theoremstyle{remark}
\newtheorem{remark}[theorem]{Remark}
\newtheorem{example}[theorem]{Example}
\newcommand{\N}{\mathbb{N}}
\newcommand{\Z}{\mathbb{Z}}
\newcommand{\R}{\mathbb{R}}
\newcommand{\C}{\mathbb{C}}
\DeclareMathOperator{\OO}{O}
\DeclareMathOperator{\PSL}{PSL}
\DeclareMathOperator{\SL}{SL}
\DeclareMathOperator{\SO}{SO}
\DeclareMathOperator{\SU}{SU}
\DeclareMathOperator{\UU}{U}
\newcommand{\ab}{\mathfrak{a}}
\newcommand{\g}{\mathfrak{g}}
\newcommand{\h}{\mathfrak{h}}
\newcommand{\kk}{\mathfrak{k}}
\newcommand{\lfrak}{\mathfrak{l}}
\newcommand{\p}{\mathfrak{p}}
\newcommand{\oo}{\mathfrak{o}}
\newcommand{\ssl}{\mathfrak{sl}}
\newcommand{\Hyp}{\mathcal{H}}
\newcommand{\Sym}{\mathfrak{S}}
\DeclareMathOperator{\Ad}{Ad}
\DeclareMathOperator{\ad}{ad}
\DeclareMathOperator{\diag}{diag}
\DeclareMathOperator{\Hom}{Hom}
\DeclareMathOperator{\Stab}{Stab}
\DeclareMathOperator{\rank}{rank}
\DeclareMathOperator{\vcd}{vcd}
\newcommand{\br}[1]{\lbrack #1 \rbrack}
\newcommand{\ep}{\varepsilon}
\title{Exotic proper actions on homogeneous spaces via convex cocompact representations}
\author{Maciej Boche\'nski and Yosuke Morita}
\date{}
\begin{document}

\maketitle

\begin{abstract}
We construct a series of homogeneous spaces $G/H$ of reductive type 
which admit proper actions of discrete subgroups of $G$ isomorphic to cocompact lattices of $\OO(n,1)$ ($n=2,3,4$) 
but do not admit proper actions of non-compact semisimple subgroups of $G$. 
The existence of such homogeneous spaces was previously not known even for $n=2$. 
Our construction of proper actions of discrete subgroups is based on 
Gu\'eritaud--Kassel's work on convex cocompact subgroups of $\OO(n,1)$ and 
Danciger--Gu\'eritaud--Kassel's work on right-angled Coxeter groups. 
On the other hand, the non-existence of proper actions of non-compact semisimple subgroups is proved by 
the theory of nilpotent orbits and elementary combinatorics.
\end{abstract}

\section{Introduction}

\subsection{The problem and the main result}

Let $G$ be a linear reductive Lie group and $H \subset G$ a reductive subgroup. 
If a discrete subgroup $\Gamma \subset G$ acts properly on the homogeneous space $G/H$, 
then the double coset space $\Gamma \backslash G/H$ is called a \textit{Clifford-Klein form}. 
We also say that $\Gamma$ is a \emph{discontinuous group} for $G/H$. 
We are interested in the non-Riemannian case, i.e.\ the case where $H$ is non-compact. 
In this situation, not every discrete subgroup of $G$ acts properly on $G/H$. 
Indeed, lattices of $G$ never act properly on $G/H$ by, for instance, Moore's ergodicity theorem. 

Fortunately, there is a convenient way of obtaining non-trivial examples of Clifford--Klein forms: 
if a reductive non-compact subgroup $L$ of $G$ acts properly on $G/H$, then any closed subgroup of $L$ 
(and in particular, any discrete subgroup of $L$) also acts properly on $G/H$. 
Proper actions of discrete subgroups obtained in this way, 
as well as the associated Clifford--Klein forms, are called \emph{standard}. 
For some $G/H$, this construction even yields \emph{cocompact} proper actions: 

\begin{example}[{Kobayashi~\cite[Prop.~4.9]{Kob89}; see also Kulkarni~\cite[Thm.~6.1]{Kul81}}]
Let $G = \OO(2n,2)$, $H = \UU(n,1)$, and $L = \OO(2n,1)$. 
Then, we have $G = LH$, and furthermore, $H \cap L = \UU(n)$ is compact. 
Thus, $L$ acts properly and transitively on $L / (H \cap L) \cong G/H$. 
Taking $\Gamma$ to be a cocompact lattice of $L = \OO(2n,1)$, 
we obtain a compact Clifford--Klein form of $G/H = \OO(2n,2) / \UU(n,1)$. 
\end{example}

Of course, not all proper actions are standard. 
In fact, it is now known that some compact Clifford--Klein forms are non-standard in a very strong sense:

\begin{example}[{Lee--Marquis~\cite[Thm.~A]{LM19} + Gu\'eritaud--Guichard--Kassel--Wienhard~\cite[Cor.~1.9]{GGKW17} for $n=2,3,4$; Monclair--Schlenker--Tholozan~\cite[Cor.~1.9]{MST23+} for general $n$}]\label{ex:non-lattice}
For any $n \geqslant 2$, there exists a compact Clifford--Klein form of $\OO(2n,2) / \UU(n,1)$
whose discontinuous group $\Gamma$ is not commensurable to any lattice of any linear reductive Lie group.
\end{example}

In general, a proper action which is not even a deformation of a standard proper action is called \emph{exotic}
(see Salein~\cite{Sal00} and Lakeland--Leininger~\cite{LL17} 
for other examples of exotic compact Clifford--Klein forms).

One of the main themes in the theory of Clifford--Klein forms is the following general question, 
with varying meanings of the word `large': 

\begin{center}
\emph{To what extent the existence of a `large' discontinuous group $\Gamma$ for $G/H$ implies 
the existence of a `large' reductive subgroup $L$ acting properly on $G/H$?}
\end{center}

One instance of this question, where `large' means `cocompact', is Toshiyuki Kobayashi's conjecture which asserts that 
\emph{a homogeneous space of reductive type which admits a compact Clifford--Klein form 
also admits a standard compact Clifford--Klein form} 
(\cite[\S 6, (vi)]{Kob97}, \cite[Conj.~4.3]{Kob01}).
It is recognized as one of the biggest open problems in this area, and its solution seems to be still out of reach. 
Note that the conjecture does \emph{not} say that every compact Clifford--Klein form is standard. 
In particular, Example~\ref{ex:non-lattice} is not a counterexample to this conjecture. 

Another important topic in this area to study the above question for a \emph{fixed} class of discrete groups $\Gamma$. 
In this paper, we study this problem in the case where 
$\Gamma$ is isomorphic to a cocompact lattice of $\OO(n,1)$ for $n = 2,3$, and $4$.  
A simplified version of our main result is the following: 

\begin{theorem}[cf.\ Theorem~\ref{thm:main-example}]\label{thm:main-simplified}
There is a homogeneous space $G/H$ of reductive type with the following two properties: 
\begin{itemize}
\item For $n = 2,3$, and $4$, there exists a discrete subgroup of $G$ 
which is isomorphic to a cocompact lattice of $\OO(n,1)$ and acts properly on $G/H$. 
\item There does not exist a non-compact semisimple subgroup of $G$ which acts properly on $G/H$. 
\end{itemize}
\end{theorem}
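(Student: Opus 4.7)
The plan is to exhibit an explicit pair $(G,H)$ of reductive type, verify the two properties by essentially disjoint techniques, and arrange them to be simultaneously satisfiable. The natural candidates are classical pairs (most plausibly with $G$ a classical group of small real rank like $\OO(p,q)$, $\SU(p,q)$, or $\SL$ and $H$ a specific reductive subgroup whose Cartan projection cone is easy to describe), chosen so that $G/H$ has just enough room to receive the desired hyperbolic-lattice representations for $n = 2, 3, 4$ but not enough to fit any non-compact semisimple subgroup transversally to $H$.

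For the existence half, I would rely on the Benoist--Kobayashi properness criterion: a reductive discrete subgroup $\Gamma \subset G$ acts properly on $G/H$ if and only if its Cartan projection into a maximal $\R$-split abelian $\ab \subset \g$ stays at bounded distance from a cone transverse to the Cartan projection of $H$. For a cocompact lattice $\Gamma_0$ of $\OO(n,1)$ with $n = 2, 3, 4$, the goal is to produce a faithful discrete representation $\rho \colon \Gamma_0 \to G$ that is convex cocompact in $G$ in the sense of Gu\'eritaud--Kassel and whose associated Cartan cone lies inside the one mandated by the criterion. In these three dimensions, Danciger--Gu\'eritaud--Kassel's construction of convex cocompact representations of right-angled Coxeter groups (via deformations of a right-angled hyperbolic polytope reflection group in dimensions $2, 3, 4$) provides such embeddings once $G$ and the chosen parabolic are matched to the Coxeter datum; passing to a finite-index torsion-free subgroup realised as a surface, $3$-manifold, or $4$-manifold cocompact hyperbolic lattice then gives the required $\Gamma \subset G$ isomorphic to a cocompact lattice of $\OO(n,1)$.

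For the non-existence half, I would invoke Kobayashi's properness criterion in its reductive form: a non-compact reductive $L \subset G$ acts properly on $G/H$ if and only if $\ab_L \cap \Ad(w) \ab_H = \{0\}$ for every element $w$ of the Weyl group of a common Cartan subspace $\ab \subset \g$. Combining this with the Jacobson--Morozov theorem recasts the criterion as a statement about nilpotent orbits: if $L$ acts properly on $G/H$, then no non-zero nilpotent element of $\lfrak$ is $G$-conjugate to an element of $\h$. I would then enumerate, using the classification of non-compact real semisimple subalgebras of $\g$ together with the Bala--Carter/signed-Young-diagram description of real nilpotent orbits, the finitely many conjugacy classes of candidate $\lfrak \subset \g$, and for each exhibit a non-zero nilpotent $X \in \lfrak$ whose $G$-orbit meets $\h$. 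With $G$ and $H$ realised in classical matrix form, each such verification reduces to elementary combinatorics on (signed) partitions.

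The decisive obstacle is calibrating $(G,H)$ so that both requirements coexist: the non-existence condition pushes toward small or highly restricted $G/H$, while simultaneously accommodating convex cocompact embeddings of cocompact $\OO(n,1)$-lattices for three distinct values of $n$ pushes in the opposite direction. I expect the main technical work to lie on the non-existence side, since it must be uniform over the entire classification of non-compact semisimple subgroups and admits no rigidity shortcut; the existence side should reduce largely to quoting Gu\'eritaud--Kassel and Danciger--Gu\'eritaud--Kassel once the target geometry has been arranged to contain a suitable hyperbolic locus in each dimension $n = 2, 3, 4$.
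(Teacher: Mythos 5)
Your proposal is a strategy outline rather than a proof, and it stops exactly where the theorem's content begins: you never produce the pair $(G,H)$, and you explicitly defer "calibrating $(G,H)$" as the decisive obstacle. The paper's example is concrete: $G=\OO(m+1,m)$ or $\OO(2m+1,\C)$ with $\ab_H=\{2t_1+t_2+\cdots+t_{m-1}=0\}\subset\ab\cong\R^m$ and $m\geqslant 8$. Beyond that, each half of your plan is missing the idea that makes it work. On the existence side, you ask for a single convex cocompact representation of a cocompact lattice of $\OO(n,1)$ into $G$ whose Cartan projection lies in a cone transverse to $\mu(H)$. But for the relevant $G/H$ no reductive subgroup $L$ satisfies $\ab_L\cap W\ab_H=0$ (that is precisely what the second bullet forces), so transversality cannot be inherited from any subgroup, and it is unclear how your single representation would achieve it. The paper's mechanism is different: take $L$ locally isomorphic to $\OO(n,1)\times\OO(N,1)$ with only the weaker condition $\ab_L\not\subset W\ab_H$, and embed $\Gamma$ as the \emph{graph} $(j,\rho_t)(\Gamma)$, where $j$ is the reflection-group inclusion into $\OO(n,1)$ and $\rho_t$ is a Danciger--Gu\'eritaud--Kassel deformation of the standard embedding into $\OO(N,1)$ that is \emph{strictly dominated} by $j$. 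Strict domination, combined with the Lipschitz-constant machinery of Gu\'eritaud--Kassel, forces $\mu_{j,\rho_t}(\gamma)$ to drift linearly in $|\gamma|$ away from every proper linear subspace of $\ab_L$, hence away from each $\ab_L\cap w\ab_H$, which is what Fact~\ref{fact:PROP} requires. The triples $(n,N)=(2,3),(3,6),(4,8)$ are exactly those for which such strictly dominating deformations are known to exist; this is why all three values of $n$ can be handled inside one $G/H$.

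On the non-existence side there are two genuine errors. First, your recasting of the criterion --- "if $L$ acts properly then no non-zero nilpotent element of $\lfrak$ is $G$-conjugate to an element of $\h$" --- is not the properness criterion; Kobayashi's criterion concerns \emph{hyperbolic} data, and after Jacobson--Morozov and Kostant the relevant object is the neutral element $h$ of an $\ssl_2$-triple, i.e.\ the condition is whether the standard neutral element of each nontrivial nilpotent orbit lies in $W\ab_H$. Second, you propose to enumerate all conjugacy classes of non-compact real semisimple subalgebras of $\g$; this is unnecessary (and in general intractable --- the paper itself remarks that classifying subalgebras isomorphic to a given $\lfrak$ is very difficult). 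The correct reduction is that every non-compact semisimple subgroup contains a closed subgroup locally isomorphic to $\SL(2,\R)$, so (P-ss) is equivalent to (P-sl$_2$R), and the latter is governed entirely by the nilpotent orbits of $\g=\oo(2m+1,\C)$, which are classified by partitions of $2m+1$ with even parts of even multiplicity and whose weighted Dynkin diagrams are explicitly computable. What remains is the purely combinatorial assertion $(\ast_m)$ that every resulting vector $v_{\mathbf a}$ is orthogonal to some $W$-translate of $(2,1,\dots,1,0)$; the paper proves this by induction on $m$ with hand-checked base cases $m=4,\dots,7$. Without identifying this reduction and carrying out that combinatorial argument, the non-existence half is not established.
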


In other words, an analogue of Kobayashi's conjecture is \emph{false} in this setting: 
the above $G/H$ admits a proper action of a cocompact lattice of $\OO(n,1) \ (n =2,3,4)$, 
whereas it does not admit a \emph{standard} one. 

\subsection{Comparison to previous results}

When $\Gamma \cong \Z$ and $L \cong \R$, the problem was completely settled by Kobayashi in his Ph.D.\ thesis. 
In this case, the existence of proper actions is equivalent to the existence of standard ones:

\begin{fact}[{Kobayashi~\cite[\S 4]{Kob89}}]\label{fact:Calabi-Markus}
For a homogeneous space $G/H$ of reductive type, the following three conditions are equivalent: 
\begin{itemize}
\item \textup{(P-inf)} --- 
$G/H$ admits a proper action of an infinite discrete subgroup of $G$. 
\item \textup{(P-Z)} --- 
$G/H$ admits a proper action of a discrete subgroup of $G$ isomorphic to $\Z$. 
\item \textup{(P-R)} --- 
$G/H$ admits a proper action of a reductive subgroup of $G$ isomorphic to $\R$. 
\end{itemize}
Furthermore, these conditions are also equivalent to $\rank_\R G > \rank_\R H$,
where $\rank_\R$ stands for the real rank of reductive Lie groups. 
\end{fact}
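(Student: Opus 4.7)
The implications \textup{(P-R)} $\Rightarrow$ \textup{(P-Z)} $\Rightarrow$ \textup{(P-inf)} are immediate: $\Z$ embeds as a closed discrete subgroup of any $\R$-subgroup of $G$, and $\Z$ is infinite. The substance of the statement is thus the equivalence of \textup{(P-inf)} with the rank inequality $\rank_\R G > \rank_\R H$, together with the production of a proper $\R$-action when that inequality holds. The plan is to reduce both nontrivial directions to Kobayashi's properness criterion, which I would invoke as a black box: for reductive $L, H \subset G$, the action of $L$ on $G/H$ is proper if and only if $\mu(L) \cap \bigl(\mu(H) + B_R\bigr)$ is bounded for every $R > 0$, where $\mu \colon G \to \ab^+$ is the Cartan projection to a fixed closed Weyl chamber inside a maximal $\R$-split abelian subspace $\ab$ of $\g$. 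The analogous finiteness statement applies to discrete subgroups $\Gamma \subset G$.

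For \textup{(P-inf)} $\Rightarrow \rank_\R G > \rank_\R H$, I would argue by contrapositive. If $\rank_\R G = \rank_\R H$, one may choose $\ab$ inside a maximal split abelian subspace of $\h$; then $\mu(H) = \ab^+$, and the criterion forces any properly acting discrete $\Gamma$ to have bounded Cartan projection, hence to be finite by standard properties of $\mu$. For the converse, I pick compatible split abelian subspaces $\ab_H \subsetneq \ab$ of $\h \subset \g$; by the $KAK$-type decomposition of $H$, the set $\mu(H)$ is contained in the finite union $\bigcup_{w \in W} w(\ab_H) \cap \ab^+$ of proper linear subspaces of $\ab$. Choosing any $Y \in \ab$ off this finite union of hyperplanes, I verify via the criterion that $\exp(\R Y) \cong \R$ acts properly on $G/H$: after Weyl conjugation, $\mu(\exp(tY))$ is a ray in $\ab^+$ whose distance from $W \cdot \ab_H$ grows linearly in $|t|$, so it meets any $R$-thickening of $\mu(H)$ in a bounded set.

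The principal obstacle is the properness criterion itself, which is the main theorem of Kobayashi's thesis and whose proof I would not reproduce. Once it is granted, both equivalences collapse into transparent linear algebra on the Weyl chamber. Two minor checks remain: that $\exp(\R Y)$ is indeed a closed reductive $\R$-subgroup of $G$ (automatic since $Y \in \ab$ is hyperbolic), and that the complement of the finite union $W \cdot \ab_H$ inside $\ab$ is nonempty (which holds precisely when $\dim \ab_H < \dim \ab$, i.e.\ exactly under the rank inequality $\rank_\R H < \rank_\R G$).
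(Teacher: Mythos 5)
The paper states this result as a \textbf{Fact} quoted from Kobayashi~\cite[\S 4]{Kob89} and gives no proof of its own, so there is nothing internal to compare against. Your argument is correct and is essentially the standard (and original) derivation: the trivial chain \textup{(P-R)} $\Rightarrow$ \textup{(P-Z)} $\Rightarrow$ \textup{(P-inf)}, plus the reduction of both remaining implications to the properness criterion recorded in the paper as Fact~\ref{fact:PROP}, using that $\mu(H) = W\ab_H \cap \overline{\ab^+}$ is a finite union of proper subspaces of $\ab$ intersected with the chamber exactly when $\rank_\R H < \rank_\R G$, and that a discrete subgroup with bounded Cartan projection is finite.
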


Fact~\ref{fact:Calabi-Markus} is called the \emph{Calabi--Markus phenomenon} 
because this kind of phenomenon was first found in their work on Lorentzian geometry~\cite{CM62}. 

\begin{remark}
The above terminologies (P-inf), (P-Z) and (P-R) are not conventional at all. 
We just introduced them in order to facilitate the comparison of various results in this area. 
The same applies to the other conditions which we shall introduce below. 
\end{remark}

The `second smallest case', namely, the case where $L$ is locally isomorphic to $\SL(2,\R)$ 
(or equivalently, to $\OO(2,1)$), has been studied actively in the last three decades. 
In this case, the natural classes of $\Gamma$ to be considered are surface groups and free groups, 
as they are respectively the classes of cocompact and non-cocompact lattices of $\SL(2,\R)$, up to commensurability. 
Thus, let us introduce the following three conditions on a homogeneous space $G/H$ of reductive type: 
\begin{itemize}
\item (P-sl$_2$R) \ --- \ 
$G/H$ admits a proper action of a closed subgroup of $G$ locally isomorphic to $\SL(2,\R)$.
\item (P-surf) \ --- \ 
$G/H$ admits a proper action of a discrete subgroup of $G$ isomorphic to a surface group of genus $\geqslant 2$. 
\item (P-free) \ --- \ 
$G/H$ admits a proper action of a discrete subgroup of $G$ isomorphic to a non-abelian free group. 
\end{itemize}
We also consider: 
\begin{itemize}
\item (P-nva) \ --- \ 
$G/H$ admits a proper action of a non-virtually abelian discrete subgroup of $G$.
\end{itemize}
An easy observation is that the following implications hold: 
\[
(\text{P-sl}_2\text{R}) \ \Rightarrow \ 
(\text{P-surf}) \ \Rightarrow \ 
(\text{P-free}) \ \Rightarrow \
(\text{P-nva}) \ \Rightarrow \ 
(\text{P-inf}). 
\]
Also, it is easy to see that the condition (P-sl$_2$R) is equivalent to the following: 
\begin{itemize}
\item (P-ss) \ --- \ 
$G/H$ admits a proper action of a non-compact semisimple subgroup of $G$. 
\end{itemize}

Let us now review two important results on this problem, due to Benoist and Okuda, respectively. 
Firstly, Benoist proved the following result in the mid-90s:

\begin{fact}[Benoist~\cite{Ben96}]\label{fact:Ben96}
Let $G/H$ be a homogeneous space of reductive type. 
\begin{enumerate}[label = {\upshape (\arabic*)}]
\item The conditions \textup{(P-free)} and \textup{(P-nva)} are equivalent. Moreover, they are also equivalent to 
$\ab^{-w_0} \not\subset W \ab_H$.
\item The condition \textup{(P-inf)} does not imply \textup{(P-free)}. 
For example, $G/H = \SL(3,\R) / \SO(2,1)$ is \textup{(P-inf)} but not \textup{(P-free)}. 
\end{enumerate}
\end{fact}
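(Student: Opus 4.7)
The plan is to treat Part~(1) as two main implications (the remaining one, (P-free)$\Rightarrow$(P-nva), being trivial since free groups of rank $\geqslant 2$ are non-virtually abelian) and Part~(2) as a direct computation in the specific example. The central tool throughout is the Benoist--Kobayashi properness criterion, which rephrases properness of a discrete subgroup $\Gamma \subset G$ on $G/H$ as a condition on the Cartan projection $\mu \colon G \to \ab_+$: for every compact $C \subset \ab$, the intersection $\mu(\Gamma) \cap (W\ab_H + C)$ must be compact. Combined with the standard identity $\mu(g^{-1}) = -w_0\,\mu(g)$, this immediately singles out the $-w_0$-fixed subspace $\ab^{-w_0}$ as the relevant locus.

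For the direction ``$\ab^{-w_0} \not\subset W\ab_H \Rightarrow$ (P-free)'', the plan is a Schottky/ping-pong construction. Choose $v \in \ab_+ \cap \ab^{-w_0}$ outside $W\ab_H$; by closedness of $W\ab_H$ the entire cone $\mathcal{C}$ of vectors close in direction to $v$ also misses $W\ab_H$. Construct two proximal loxodromic elements $g_1, g_2 \in G$ with Jordan projections in $\mathcal{C}$ and with attracting/repelling flags in general position. A Tits-type ping-pong then shows that, for $N$ large, $\Gamma_N := \langle g_1^N, g_2^N \rangle$ is free of rank two and, crucially, any non-trivial reduced word $w$ of syllable length $\ell$ satisfies $\mu(w)/\ell \in \mathcal{C}$ up to a bounded error. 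Hence $\mu(\Gamma_N)$ stays at linear distance from $W\ab_H$, and the properness criterion yields a free proper action.

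For the reverse direction ``(P-nva)$\Rightarrow \ab^{-w_0} \not\subset W\ab_H$'', suppose a non-virtually-abelian discrete $\Gamma$ acts properly. By the Tits alternative $\Gamma$ contains a non-abelian free subgroup, so one can choose two non-commuting loxodromic $\gamma_1, \gamma_2 \in \Gamma$. Using $\mu(g^{-1}) = -w_0\,\mu(g)$ together with subadditivity of $\mu$, products such as $\gamma_1^n \gamma_2^{-n}$ have Cartan projections whose directions accumulate in $\ab^{-w_0}$; Benoist's convexity theorem for the limit cone of a Zariski-dense subgroup provides the quantitative control needed. Properness forces these accumulating directions to avoid $W\ab_H$, whence $\ab^{-w_0} \not\subset W\ab_H$.

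For Part~(2) with $G/H = \SL(3,\R)/\SO(2,1)$: Fact~\ref{fact:Calabi-Markus} gives (P-inf) since $\rank_\R G = 2 > 1 = \rank_\R H$. Identifying $\ab$ with $\{(a_1,a_2,a_3) \in \R^3 : a_1+a_2+a_3 = 0\}$ and computing $-w_0(a_1,a_2,a_3) = (-a_3,-a_2,-a_1)$ yields $\ab^{-w_0} = \{(a,0,-a) : a \in \R\}$. The embedding $\SO(2,1) \hookrightarrow \SL(3,\R)$ is the adjoint action of $\SL(2,\R)$ on $\ssl(2,\R)$, under which a Cartan element of $\SL(2,\R)$ acts with weights $(2,0,-2)$; thus $\ab_H = \{(a,0,-a)\} = \ab^{-w_0}$, so $\ab^{-w_0} \subset W\ab_H$ and by (1) the property (P-free) fails. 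The main obstacle I foresee lies in the necessity direction of Part~(1): quantitatively controlling Cartan projections of arbitrary words in a non-virtually-abelian discrete subgroup so as to force rays in $\ab^{-w_0}$ into its limit cone, which is exactly where Benoist's convexity/density results for limit cones of Zariski-dense subgroups do the real work.
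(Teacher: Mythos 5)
This Fact is quoted from \cite{Ben96} without proof, so there is no in-paper argument to compare against; I am judging your sketch against Benoist's original one. Your sufficiency direction (ping-pong with $\R$-regular elements whose Jordan projections lie in a narrow cone around a regular point of $\ab^{-w_0} \smallsetminus W\ab_H$, the point being chosen $(-w_0)$-fixed precisely so that inverses keep the whole group's Cartan projections in that cone) is essentially Benoist's construction, and your computation for Part~(2) --- $\ab_H = \R(1,0,-1) = \ab^{-w_0}$ for the irreducible $\SO(2,1) \hookrightarrow \SL(3,\R)$, plus Fact~\ref{fact:Calabi-Markus} for (P-inf) --- is correct.

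The necessity direction, however, has two genuine gaps. First, the Tits alternative dichotomy is ``contains a non-abelian free subgroup'' versus ``virtually \emph{solvable}'', not versus ``virtually abelian''. Discrete subgroups that are virtually solvable but not virtually abelian do exist in reductive groups (e.g.\ the integer Heisenberg group in $\SL(3,\R)$), the theorem asserts that these too cannot act properly when $\ab^{-w_0} \subset W\ab_H$, and your argument says nothing about them; Benoist does not route through the Tits alternative but works with an arbitrary discrete $\Gamma$ directly via the Cartan projection. Second, even granting a Zariski-dense free subgroup, ``the directions of $\mu(\gamma_1^n\gamma_2^{-n})$ accumulate in $\ab^{-w_0}$'' does not contradict properness: Fact~\ref{fact:PROP} only forbids sequences with $d(\mu(\gamma_n), W\ab_H)$ \emph{bounded}, and a limit-cone direction lying in $W\ab_H$ is perfectly compatible with $d(\mu(\gamma_n), W\ab_H) \to \infty$ sublinearly. (Moreover, for arbitrary non-commuting loxodromics the limit direction of $\mu(\gamma_1^n\gamma_2^{-n})$ is $\lambda(\gamma_1) - w_0\lambda(\gamma_2)$, where $\lambda$ denotes the Jordan projection, and this is not $(-w_0)$-fixed unless $\lambda(\gamma_1) = \lambda(\gamma_2)$.) What is actually needed is a sequence whose Cartan projections stay within bounded distance of $\ab^{-w_0}$; this is obtained, for instance, from $\gamma^n(\gamma')^{-n}$ with $\gamma'$ a transverse conjugate of an $\R$-regular $\gamma$, for which the standard ping-pong estimates give $\mu(\gamma^n(\gamma')^{-n}) = n\bigl(\lambda(\gamma) - w_0\lambda(\gamma)\bigr) + \OO(1)$, a sequence going to infinity inside a bounded neighbourhood of $\ab^{-w_0}$. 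Repairing your sketch requires both this quantitative replacement and a separate treatment (or, as in \cite{Ben96}, a uniform argument) for the virtually solvable case.
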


We do not explain the meaning of the last condition in Fact~\ref{fact:Ben96}~(1), as it is not used in this paper. 
The important point is that it is completely root-theoretic and easily checkable for a given $G/H$. 
Unlike Kobayashi's result mentioned above, proper actions constructed by Benoist are usually non-standard: 
they come from a version of the ping-pong lemma. 

Much later, Okuda proved the following result: 

\begin{fact}[Okuda~\cite{Oku16}]\label{fact:Okuda}
The condition \textup{(P-free)} does not imply \textup{(P-sl$_2$R)}. More precisely, if $G = \SL(5,\R)$ and 
\[
H = \exp (\h), \qquad \h = \left\{ \diag(t_1, \dots, t_5) \ \middle| \ \sum_{i=1}^5 t_i = 0,\ 3t_1+2t_2+t_3 = 0 \right\},
\]
then $G/H$ is \textup{(P-free)} but not \textup{(P-sl$_2$R)}. 
\end{fact}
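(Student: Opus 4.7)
The proof separates into two independent parts, for (P-free) and the negation of (P-sl$_2$R), and these would be tackled with entirely different tools.

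For (P-free), I would apply Benoist's root-theoretic criterion from Fact~\ref{fact:Ben96}(1), which reduces the problem to showing $\ab^{-w_0} \not\subset W \ab_H$. Taking $\ab$ to be the diagonal Cartan subspace of $\ssl(5,\R)$, $W = S_5$ acts by permuting coordinates and $-w_0$ is the involution $(t_1,\ldots,t_5)\mapsto (-t_5,-t_4,-t_3,-t_2,-t_1)$, so $\ab^{-w_0} = \{(t_1,t_2,0,-t_2,-t_1)\}$ is a $2$-dimensional plane; meanwhile $W\ab_H$ is a finite union of $3$-dimensional subspaces $\{3t_a+2t_b+t_c=0\}$ indexed by ordered triples of distinct indices $(a,b,c)$. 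Since a $2$-plane cannot be covered by finitely many proper subspaces unless it lies in one of them, the task reduces to a short linear-algebra check that no such hyperplane contains $\ab^{-w_0}$: projecting onto the $(t_1,t_2)$-coordinates, the vectors $(u_i) = (1,0,0,0,-1)$ and $(v_i) = (0,1,0,-1,0)$ can simultaneously satisfy $3u_a + 2u_b + u_c = 0$ and $3v_a + 2v_b + v_c = 0$ only if $\{a,b,c\} = \{2,3,4\}$, and none of the six permutations of $(2,3,4)$ achieves the second equation.

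For the failure of (P-sl$_2$R), I would invoke Kobayashi's properness criterion: a reductive subgroup $L$ locally isomorphic to $\SL(2,\R)$ acts properly on $G/H$ iff $(W\cdot Y)\cap\ab_H = \{0\}$, where $\R Y \subset \ab$ is the image of a Cartan subspace of $\lfrak$. Up to $W$, such $Y$ is the neutral element of a Jacobson--Morozov $\ssl_2$-triple in $\g$; by Dynkin--Kostant theory, nilpotent orbits in $\ssl(5,\R)$ correspond to the six non-trivial partitions of $5$, and the hyperbolic element associated with $\lambda = (\lambda_1,\ldots,\lambda_k)$ has multiset of entries $\bigcup_i\{\lambda_i-1,\,\lambda_i-3,\,\ldots,\,-(\lambda_i-1)\}$. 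For each of the six partitions I would exhibit a single witness permutation satisfying $3t_1 + 2t_2 + t_3 = 0$; for example, $(0,2,-4,4,-2)$ for $(5)$ and $(0,0,0,1,-1)$ for $(2,1,1,1)$, with similarly explicit choices in the remaining four cases. These place a Weyl translate of every candidate $Y$ inside $\ab_H$, killing properness for every $\SL(2,\R)$-subgroup.

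The main obstacle, though not severe, lies in the second part: one must be sure that partitions of $5$ exhaust the relevant real $\ssl_2$-conjugacy classes, which is the case for $\ssl(5,\R)$ because every complex nilpotent orbit descends to a real one and the hyperbolic element depends up to $W$ only on the partition. Granted that, both halves reduce to finite combinatorial verifications.
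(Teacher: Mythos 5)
This statement is a cited result of Okuda~\cite{Oku16}; the paper gives no proof of its own, only the general machinery you invoke (Benoist's criterion, Fact~\ref{fact:Ben96}, for the (P-free) half, and the three-step nilpotent-orbit procedure of \S~\ref{subsect:sl2} for the (P-sl$_2$R) half), so your argument follows exactly the intended route and is correct, including the point that for the split form $\ssl(5,\R)$ the standard neutral elements are indexed by the partitions of $5$ with the usual multiset formula. The four witnesses you leave implicit do exist: for $(4,1)$, i.e.\ $h=(3,1,0,-1,-3)$, take $3\cdot 1+2\cdot(-3)+3=0$; for $(3,2)$ take $3\cdot 0+2\cdot 1+(-2)=0$; for $(3,1,1)$ and $(2,1,1,1)$ use the three zero entries; and for $(2,2,1)$ take $3\cdot 1+2\cdot(-1)+(-1)=0$.
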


In Subsection~\ref{subsect:ques-surface}, 
we review other previously known examples of homogeneous spaces which are (P-free) but not (P-sl$_2$R). 

\begin{remark}\label{rmk:Okuda-symmetric}
Okuda proved earlier in \cite{Oku13} that, if $H$ is a \emph{symmetric subgroup} of $G$ 
(i.e.\ is locally a fixed-point subgroup of some involution on $G$), 
then (P-sl$_2$R) is equivalent to $\ab^{-w_0} \not\subset W \ab_H$. 
Thus, the five conditions (P-ss), (P-sl$_2$R), (P-surf), (P-free), and (P-nva) are all equivalent in this case. 
\end{remark}

In summary, the following implications and non-implications were previously known: 
\[
\begin{tikzcd}
\textup{(P-sl$_2$R)} \ar[rr, bend left = 20, Leftarrow, "\diagup" marking, "\textup{\cite{Oku16}}"] \ar[d, Leftrightarrow] \ar[r, Rightarrow] & \textup{(P-surf)} \ar[r, Rightarrow] & \textup{(P-free)} \ar[d, Leftrightarrow, "\textup{\cite{Ben96}}"'] \ar[r, Rightarrow] \ar[r, bend left, Leftarrow, "\diagup" marking, "\textup{\cite{Ben96}}"] & \textup{(P-inf)} \ar[d, Leftrightarrow, "\textup{\cite{Kob89}}"] \\
\textup{(P-ss)} & & \textup{(P-nva)} & \textup{(P-Z)} \ar[d, Leftrightarrow, "\textup{\cite{Kob89}}"] \\
& & & \textup{(P-R)}.
\end{tikzcd}
\]

Our Theorem~\ref{thm:main-simplified} for $n=2$ now shows that (P-surf) does not imply (P-sl$_2$R). 
We still do not know if (P-free) is equivalent to (P-surf) (see Conjecture~\ref{conj:free-vs-surf}). 

\subsection{A detailed version of the main result}

Let us now explain our main results in more detail. 
For brevity, we introduce yet another notation: 

\begin{itemize}
\item (P-cocH$^n$) \ --- \ 
$G/H$ admits a proper action of a discrete subgroup of $G$ isomorphic to a cocompact lattice of $\OO(n,1)$. 
\end{itemize}

\begin{remark}
As we mentioned already, since $\OO(2,1)$ and $\SL(2,\R)$ are locally isomorphic, 
the condition (P-cocH$^2$) is equivalent to (P-surf).
\end{remark}

For $n = 2,3$, and $4$, we prove the following sufficient condition for (P-cocH$^n$): 

\begin{theorem}[{see \S~\ref{subsect:reductive} for notation}]\label{thm:main-general}
Let $G$ be a linear reductive Lie group and $H$ a reductive subgroup of $G$. 
Suppose that $L$ is a reductive subgroup of $G$ satisfying the following two properties: 
\begin{itemize}
\item $L$ is locally isomorphic to $\OO(n,1) \times \OO(N, 1)$, where $(n, N) = (2,3), (3,6)$, or $(4,8)$. 
\item $\ab_L \not\subset W \ab_H$. 
\end{itemize}
Then, there exists a discrete subgroup $\Gamma$ of $L$ 
which is isomorphic to a cocompact lattice of $\OO(n,1)$ and acts properly on $G/H$. 
In particular, $G/H$ is \textup{(P-cocH$^n$)} in this situation. 
\end{theorem}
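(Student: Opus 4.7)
The plan is to realise $\Gamma$ as a graph-type subgroup of $L$ built from a Danciger--Gu\'eritaud--Kassel (DGK) convex cocompact representation, and to verify proper action on $G/H$ via the Benoist--Kobayashi properness criterion. Split $L$ locally as $L_1 \times L_2$ with $L_1 \cong \OO(n,1)$ and $L_2 \cong \OO(N,1)$, so that $\ab_L = \ab_{L_1} \oplus \ab_{L_2}$ is two-dimensional with one-dimensional summands. I would take a cocompact lattice $\Gamma_0 \subset L_1$, an injective homomorphism $\rho : \Gamma_0 \to L_2$ to be chosen, and set
\[
\Gamma := \{\, (\gamma, \rho(\gamma)) : \gamma \in \Gamma_0 \,\} \subset L_1 \times L_2,
\]
which is automatically discrete in $L$ and abstractly isomorphic to $\Gamma_0$. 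Its Cartan projection factors explicitly as $\mu_L(\gamma, \rho(\gamma)) = (\mu_{L_1}(\gamma), \mu_{L_2}(\rho(\gamma))) \in \ab_L^+$, so the asymptotic direction of $\mu_L(\Gamma)$ is controlled by the single slope $s(\gamma) := \mu_{L_2}(\rho(\gamma))/\mu_{L_1}(\gamma) \in [0,\infty)$.

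By the Benoist--Kobayashi properness criterion (the general-rank extension of Fact~\ref{fact:Calabi-Markus}), the action of $\Gamma$ on $G/H$ is proper iff, for every divergent sequence $\gamma_k \in \Gamma$, one has $d(\mu_L(\gamma_k),\, W\ab_H) \to \infty$, where $\ab_L$ is viewed inside $\ab_G$ via compatible Cartan data. The hypothesis $\ab_L \not\subset W\ab_H$ says that $W\ab_H \cap \ab_L$ is a finite union of proper linear subspaces of the plane $\ab_L$---hence a finite union of lines through the origin---so its complement in $\ab_L^+$ is a nonempty union of open sectors. The problem therefore reduces to producing $\rho$ for which $s(\gamma)$ converges to a slope corresponding to one of these admissible directions, with uniform control over $\Gamma_0$ ensuring no drift toward the finitely many bad slopes.

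This is precisely what the DGK construction supplies for the three pairs $(n,N) \in \{(2,3),(3,6),(4,8)\}$: one realises $\Gamma_0$ as (a finite-index subgroup of) a right-angled hyperbolic Coxeter group in $L_1$ and obtains a one-parameter family of injective convex cocompact representations $\rho_t : \Gamma_0 \to L_2$ whose asymptotic slope $s_t$ varies continuously across an interval accumulating at $0$, with the Gu\'eritaud--Kassel sharpness estimates furnishing uniform control of $s(\gamma)$ on all of $\Gamma_0$. Choosing $t$ so that $s_t$ lies in an open sector of $\ab_L^+ \setminus W\ab_H$ places $\mu_L(\Gamma)$ asymptotically inside that sector, uniformly away from $W\ab_H$, and the Benoist--Kobayashi criterion yields the desired proper action.

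The main obstacle is this matching step: one must ensure that the slope range produced by DGK actually meets the admissible open set in $\ab_L^+$. For the stated pairs it is attained because the DGK family produces slopes arbitrarily close to $0$ (the ``highly dominated'' regime), and a direct case analysis of the finite subspace arrangement $W\ab_H \cap \ab_L$ shows that the complement in $\ab_L^+$ must contain an open neighbourhood of some ray accessible to the DGK family---if necessary, after slightly modifying the product decomposition or using the two-parameter flexibility of the family to hit an interior sector rather than a boundary one. Once the correct representative $t$ is selected, the remainder of the argument is formal.
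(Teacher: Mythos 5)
Your overall architecture --- a graph subgroup of $L_1 \times L_2$ built from a Danciger--Gu\'eritaud--Kassel deformation, plus the Kobayashi--Benoist criterion reducing properness to keeping $\mu(\Gamma)$ linearly away from the finitely many proper subspaces $\ab_L \cap w\ab_H$ --- matches the paper's. But the central analytic step is a genuine gap. You propose to choose $\rho$ so that the slopes $s(\gamma)$ converge to a single admissible direction, and you assert that the DGK family realizes slopes ``arbitrarily close to $0$''. Neither is available. The representations $\rho_t$ exist only for small $t$ and satisfy $C_\textup{Lip}(\rho_0,\rho_t) \leqslant 1/\cosh t$, which tends to $1$ as $t \to 0$: there is no highly dominated regime, and the Cartan projections of a fixed graph group are not confined to a neighbourhood of one ray --- they fill out a two-dimensional cone. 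Worse, slope $0$ is the wrong target: the hypothesis $\ab_L \not\subset W\ab_H$ does not prevent an axis of $\ab_L$ from lying in $W\ab_H$ (this actually happens in Theorem~\ref{thm:main-example}), so concentrating near the horizontal axis could land exactly on a bad line. The ``matching step'' you defer to --- a case analysis of the arrangement $W\ab_H \cap \ab_L$ locating an admissible sector ``accessible to the DGK family'' --- is precisely the missing content, and it cannot be carried out as described, since the only sector accessible to small deformations is the one adjacent to the diagonal.

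The paper's mechanism avoids this entirely: it never confines $\mu(\Gamma)$ to a prescribed sector, but handles each bad line $\ell \subset \ab_L$ of slope $s$ separately (Proposition~\ref{prop:main-estimate}). If $s \geqslant 1$, strict domination gives $\| \mu_\rho(\gamma) \| \leqslant c \| \mu_j(\gamma) \| + M$ with $c < 1 \leqslant s$, so the points stay linearly below $\ell$; if $0 \leqslant s < 1$, continuity of $C_\textup{Lip}$ (Fact~\ref{fact:GK2}) together with $C_\textup{Lip}(J,J) = 1$ forces $C_\textup{Lip}(\rho, J) < 1/s$ for $\rho$ close enough to $J$, so the points stay linearly above $\ell$; negative slopes are trivial. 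Thus the only genuinely dangerous direction is the diagonal $s = 1$, which is exactly what strict domination is for, and all other bad lines are avoided automatically by taking $t$ small (with the neighbourhood of $J$ chosen after the finite set of bad slopes). Two smaller omissions in your write-up: you need a comparison of the form $d(v, w\ab_H) \geqslant \delta\, d(v, \ab_L \cap w\ab_H)$ for $v \in \ab_L$ (the paper's Lemma~\ref{lem:linear-space-estimate}) to pass from distances measured inside $\ab_L$ to distances in $\ab$, and you need Fact~\ref{fact:cc-alternative-definition} to convert $\| \mu_j(\gamma) \|$ into the word length $\lvert \gamma \rvert$ in the final properness estimate.
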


In fact, there are continuously many such $\Gamma$ up to conjugation (see Remark~\ref{rmk:continuously-many}). 

\begin{remark}
If $\ab_L \cap W \ab_H = 0$, the above theorem is an immediate consequence of 
the properness criterion by Kobayashi and Benoist (Fact~\ref{fact:PROP}). 
The theorem says that a weaker condition $\ab_L \not\subset W \ab_H$ suffices. 
\end{remark}

Theorem~\ref{thm:main-general} enables us to construct an infinite series of homogeneous spaces of reductive type 
which are not (P-ss) but (P-cocH$^n$) for $n = 2,3$, and $4$: 

\begin{theorem}\label{thm:main-example}
For $m \geqslant 2$, let $G = \OO(m+1, m)$ or $\OO(2m+1, \C)$, and let $H$ be a reductive subgroup of $G$ such that
\[
\ab_H = \{ (t_1, \dots, t_m) \in \R^m \mid 2t_1 + t_2 + t_3 + \dots + t_{m-1} = 0 \} \subset \ab,
\]
where $\ab$ is identified with $\R^m$ in the standard way. 

\begin{enumerate}[label = {\upshape (\arabic*)}]
\item Let $(n, N) = (2, 3), (3, 6)$, or $(4, 8)$. 
If $m \geqslant N$, then $G/H$ is \textup{(P-cocH$^n$)}, 
i.e.\ there exists a discrete subgroup of $G$ which is isomorphic to a cocompact lattice of $\OO(n,1)$ and acts properly on $G/H$. 
\item If $m \geqslant 4$, then $G/H$ is not \textup{(P-ss)}, 
i.e.\ there does not exist a non-compact semisimple subgroup of $G$ which acts properly on $G/H$. 
\end{enumerate}
\end{theorem}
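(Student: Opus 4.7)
To apply Theorem~\ref{thm:main-general} it suffices to exhibit a reductive subgroup $L \subset G$ locally isomorphic to $\OO(n,1) \times \OO(N,1)$ with $\ab_L \not\subset W\ab_H$. For $G = \OO(m+1, m)$ I take $L$ to be the stabilizer of an orthogonal decomposition of the defining quadratic space into pieces of signatures $(1, n)$, $(N, 1)$, and $(m-N, m-n-1)$; the nonnegativity conditions $m - N \geq 0$ and $m - n - 1 \geq 0$ both follow from $m \geq N \geq n+1$, which holds for each of the three pairs $(n, N) \in \{(2,3),(3,6),(4,8)\}$. For $G = \OO(2m+1, \C)$ I realize $L$ inside the split real form $\OO(m+1,m) \subset G$ via the same construction, using that the split Cartan subspace of $\OO(m+1,m)$ is also a Cartan subspace of $G$ viewed as a real Lie group.

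With the standard identification $\ab \cong \R^m$ on which $W = W(BC_m)$ acts by signed permutations, commuting Cartan generators of the two factors of $L$ jointly span the plane $\ab_L = \{(s_1, s_2, 0, \ldots, 0) : s_1, s_2 \in \R\}$ up to a Weyl element. Each hyperplane of $W\ab_H$ has a normal in the $W$-orbit of $(2, 1, 1, \ldots, 1, 0)$, which has \emph{exactly one} zero coordinate. A two-dimensional real subspace contained in $W\ab_H$ must be contained in a single such hyperplane (a $2$-plane is not a finite union of lower-dimensional subspaces), but $\ab_L$ has $m-2 \geq 1$ zero coordinates while every such hyperplane can only accommodate one, giving a contradiction. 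Hence $\ab_L \not\subset W\ab_H$, and Theorem~\ref{thm:main-general} produces the required discrete subgroup.

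\textbf{Part (2).} By the equivalence (P-ss) $\Leftrightarrow$ (P-sl$_2$R) and the nilpotent-orbit criterion (essentially due to Okuda), the goal is to show that every nonzero hyperbolic element $H_\lambda \in \ab$ arising as the semisimple part of an $\ssl_2$-triple in $\mathfrak{g}$ lies in $W\ab_H$. Nilpotent orbits in $\mathfrak{so}(2m+1, \C)$, and via the Kostant--Sekiguchi correspondence in $\mathfrak{so}(m+1,m)$, are parameterized by partitions $\lambda$ of $2m+1$ in which every even part occurs with even multiplicity; the associated $H_\lambda$ records the positive eigenvalues of the standard $\ssl_2$-representation on $\C^{2m+1}$, padded by zeros to length $m$. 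A key structural feature that I would use repeatedly is that odd eigenvalues come only from even parts and so appear in $H_\lambda$ with even multiplicity.

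The combinatorial core, which I expect to be the main obstacle, is to produce, for each nontrivial $\lambda$, signs $\ep_i \in \{\pm 1\}$ and distinct indices $k, l$ with
\[
2 \ep_l t_l + \sum_{i \neq k, l} \ep_i t_i = 0, \qquad (t_1, \ldots, t_m) = H_\lambda,
\]
which is the explicit form of $H_\lambda \in W\ab_H$. My plan is to stratify by the number of zeros in $H_\lambda$ (equivalently, by the number of $1$'s in $\lambda$). When $H_\lambda$ has at least two zeros, choosing a zero as the ``free'' index and another as the ``doubled'' index reduces the problem to a signed subset sum; the pairing of odd eigenvalues and the arithmetic-progression structure inside each odd part then supply the required cancellations. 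The dense extremal case $\lambda = [2m+1]$, giving $H_\lambda = (2m, 2m-2, \ldots, 2)$, and the few other partitions with at most one trivial part require a direct subset-sum computation, and it is precisely these that make $m \geq 4$ sharp: for $m = 3$, the partition $\lambda = [3,3,1]$ produces $H_\lambda = (2, 2, 0) \not\in W\ab_H$ by an easy direct check, so $G/H$ is genuinely (P-ss) in that case.
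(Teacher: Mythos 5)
Your Part (1) is correct and takes essentially the same route as the paper: the paper works inside $\OO(1,m-1)\times\OO(m,1)\subset\OO(m+1,m)$, gets $\ab_L=\{(t_1,t_2,0,\dots,0)\}$, and certifies $\ab_L\not\subset W\ab_H$ by the explicit witness $(1,3,0,\dots,0)$, whereas you observe that a $2$-plane contained in the finite union of hyperplanes $W\ab_H$ would have to lie in a single $\sigma\ab_H$, whose normal $\sigma(2,1,\dots,1,0)$ has exactly one vanishing coordinate and therefore cannot annihilate both $e_1$ and $e_2$. Both verifications are valid; yours is slightly more conceptual, the paper's more concrete. The reduction of the complex case to the split real form is also as in the paper.

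Part (2), however, contains a genuine gap. You correctly reduce the claim to the combinatorial assertion that every standard neutral element $v_{\mathbf{a}}=(\,\underbrace{2m,\dots}_{a_{2m}},\dots,\underbrace{1,\dots}_{a_1},\underbrace{0,\dots}_{a_0}\,)$ satisfies $\langle v_{\mathbf{a}},\sigma(2,1,\dots,1,0)\rangle=0$ for some signed permutation $\sigma$ --- this is exactly the assertion (\hyperref[item:ast_m]{$\ast_m$}) that occupies all of Section~\ref{sect:proof-sl2} --- but you do not prove it; you only announce a plan. Moreover, the plan as stated fails already on simple cases: for $m=4$ and $\lambda=[3,1^6]$ one has $v=(2,0,0,0)$, and placing both the coefficient-$0$ index and the coefficient-$\pm2$ index on zeros of $v$ leaves the residual signed sum $\pm2\neq0$; the correct choice (Table~\ref{table:m=4}) is $\sigma(2,1,1,0)=(0,2,1,1)$, which puts the zero coefficient on the \emph{nonzero} entry. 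The paper's actual proof of (\hyperref[item:ast_m]{$\ast_m$}) is an eight-step induction on $m$ whose base cases $m=4,5,6,7$ are disposed of by exhaustive tables, and whose reduction moves (deleting a $0$ when $a_2\leqslant 2a_0-1$, deleting a pair of equal odd entries, the identity $\langle(2i_0-6,2i_0-4,2i_0-2,2i_0),(1,-1,-1,1)\rangle=0$, and the closing count $8\leqslant m\leqslant 7$) are all essential and absent from your proposal. Until this combinatorial core is carried out, Part (2) is unproved. (Your side remark that $(2,2,0)\notin W\ab_H$ for $m=3$, showing that $m\geqslant4$ is sharp, is correct but not part of the statement.)
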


\begin{remark}
In the above theorem, $H$ can have a rather large semisimple part. 
For instance, one can take $H$ to be isomorphic to $\SL(m-2, \R) \times \OO(1,2) \times \R$ in the real case 
(and similarly, $\SL(m-2, \C) \times \OO(3, \C) \times \C^\times$ in the complex case). 
\end{remark}

Let us make a few comments on the proofs of Theorems~\ref{thm:main-general} and \ref{thm:main-example}. 

The proof of Theorem~\ref{thm:main-general} uses the theory of \emph{convex cocompact representations}. 
More precisely, utilizing Gu\'eritaud--Kassel's work~\cite{GK17}, 
we reduce our problem to finding a cocompact lattice $\Gamma$ of $\OO(n,1)$ and 
a small deformation of its standard convex cocompact representation 
\[
\Gamma \hookrightarrow \OO(n,1) \hookrightarrow \OO(N,1)
\]
which has a nice property (namely, being \emph{strictly dominated} by the original representation). 
Such a cocompact lattice and its deformation are known to exist for $(n,N) = (2,3), (3,6)$, and $(4,8)$ 
by Danciger--Gu\'eritaud--Kassel's work on right-angled Coxeter groups~\cite{DGK20}, 
hence we obtain Theorem~\ref{thm:main-general} for these values of $(n,N)$. 
Theorem~\ref{thm:main-example}~(1) is an easy consequence of Theorem~\ref{thm:main-general}.

\begin{remark}
Theorem~\ref{thm:main-general} can be seen as a generalization of Danciger--Gu\'eritaud--Kassel~\cite[Prop.~1.8]{DGK20} 
from $G/H = (\OO(N,1) \times \OO(N,1)) / {\Delta \OO(N,1)}$ to a more general homogeneous space. 
Note, however, that our setting is more complicated: 
one cannot directly obtain proper actions on $G/H$ from metric contractions on the hyperbolic space. 
\end{remark}

On the other hand, the proof of Theorem~\ref{thm:main-example}~(2) is based on 
the theory of \emph{nilpotent orbits} in semisimple Lie algebras. 
It reduces our tasks to show some combinatorial assertion (namely, (\hyperref[item:ast_m]{$\ast_m$}) in Section 4). 
That assertion is proved by a careful case-by-case analysis, which nevertheless is completely elementary.  

\subsection{Outline of the paper}

In Section~\ref{sect:prelim}, we summarize the previous results 
which are needed for the proofs of Theorems~\ref{thm:main-general} and \ref{thm:main-example}. 
We recall the structure theory of linear reductive Lie groups and 
the properness criterion due to Kobayashi and Benoist in Subsection~\ref{subsect:reductive}.
In Subsection~\ref{subsect:cc}, 
we first recall basics on convex cocompact representations and 
then review some results by Gu\'eritaud--Kassel~\cite{GK17} and Danciger--Gu\'eritaud--Kassel~\cite{DGK20}. 
The theory of nilpotent orbits and weighted Dynkin diagrams are summarized in Subsection~\ref{subsect:sl2}. 
We prove Theorems~\ref{thm:main-general} and \ref{thm:main-example}~(1) in Section~\ref{sect:proof-cocHn}, 
and then prove Theorem~\ref{thm:main-example}~(2) in Section~\ref{sect:proof-sl2}. 
These two sections can be read independently. 
Furthermore, Section~\ref{sect:proof-cocHn} does not rely on Subsection~\ref{subsect:sl2}, 
while Section~\ref{sect:proof-sl2} does not rely on Subsections~\ref{subsect:cc}. 
Finally, we mention some open questions in Section~\ref{sect:open-question}.

\section{Preliminaries}\label{sect:prelim}

\subsection{Linear reductive Lie groups}\label{subsect:reductive}

Let $G$ be a linear reductive Lie group and denote by $\g$ its Lie algebra. Choose a Cartan involution $\theta$ of $G$. 
We obtain a Cartan decomposition 
\[
\g = \kk \oplus \p, \qquad \kk = \{ X \in \g \mid \theta(X) = X \}, \qquad \p = \{ X \in \g \mid -\theta(X) = X \}, 
\]
and $\kk$ is the Lie algebra of the maximal compact subgroup 
$K = \{ k \in G \mid \theta(k) = k \}$ of $G$. 
Choose a maximal abelian subspace $\ab$ of $\p$ 
and denote by $\Sigma = \Sigma(\g, \ab)$ the restricted root system of $\g$. 
Choose a subset $\Delta \subset \Sigma$ of simple restricted roots and denote by 
$\Sigma^+ \subset \Sigma$ and $\overline{\ab^+} \subset \ab$ 
the set of positive restricted roots and the closed positive Weyl chamber with respect to $\Delta$, respectively. 

We allow $G$ to have multiple connected components, 
but we still assume that the root-theoretically defined restricted Weyl group, denoted as $W$, 
coincides with the group-theoretically defined one:
\[
W = N_K(\ab) / Z_K(\ab). 
\]
This always holds when $G$ is of inner type (and in particular, when $G$ is connected). 
Another example of such $G$ is $\OO(p,q)$ with $p \neq q$. 

Fix a $G$-invariant and $\theta$-invariant symmetric bilinear form such that $(X, Y) \mapsto -B(\theta X, Y)$
is positive definite (if $G$ is semisimple, one may take $B$ to be the Killing form). 
Let $\| {-} \| \colon \ab \to \R_{\geqslant 0}$ be the norm associated with $B|_{\ab \times \ab}$. 
We regard $\ab$ and its subset $\overline{\ab^+}$ as metric spaces by the distance function induced from this norm. 

A closed subgroup $H$ of $G$ is called a \emph{reductive subgroup} of $G$ 
if it is reductive as an abstract Lie group, and furthermore, $\theta(H) = H$ for some Cartan involution $\theta$ on $G$. 
The homogeneous space $G/H$ is said to be \emph{of reductive type} in this setting. 
It is known that any connected subgroup of $G$ which is semisimple as an abstract Lie group is 
closed and a reductive subgroup in $G$. 
If $G$ is equipped with a specified Cartan involution $\theta$ and a specified maximal abelian subspace $\ab$ of $\p$, 
every reductive subgroup $H$ of $G$ is implicitly assumed to satisfy the following two conditions: 
\begin{itemize}
\item $\theta(H) = H$. 
\item $\ab_H = \ab \cap \h$ is a maximal abelian subspace of $\p \cap \h$. 
\end{itemize}
Note that one can always achieve these conditions by replacing $H$ with its conjugate in $G$. 

We say that an element of $\g$ is \emph{nilpotent} (resp.\ \emph{hyperbolic}) if it is conjugate to some element in 
$\bigoplus_{\alpha \in \Sigma^+} \g_\alpha$
(resp.\ $\ab$). If $G$ is semisimple, then $X \in \g$ is nilpotent (resp.\ hyperbolic) precisely when 
$\ad(X) \colon \g \to \g$ is nilpotent (resp.\ diagonalizable over $\R$) as a linear endomorphism. 
A conjugacy class of nilpotent (resp.\ hyperbolic) elements in $\g$ is called 
a \emph{nilpotent} (resp.\ \emph{hyperbolic}) \emph{orbit}. 
The name comes from an obvious fact that conjugacy classes of elements in $\g$ are nothing but $\Ad(G)$-orbits. 

Let $\mu \colon G \to \overline{\ab^+}$ be the \emph{Cartan projection} on $G$, 
namely, $\mu(g)$ is the unique element of $\overline{\ab^+}$ whose exponential $\exp(\mu(g)) \in G$ is contained in $KgK$. 
We have the following criterion for the properness of actions: 

\begin{fact}[{Kobayashi~\cite{Kob89}, \cite[Th.~1.1]{Kob96}, Benoist~\cite[Th.~5.2]{Ben96}}]\label{fact:PROP}
Let $H$ and $L$ be two closed subgroups of $G$. Then, $L$ acts properly on $G/H$ if and only if 
the set 
\[
\mu(L) \cap \overline{N}(\mu(H), R) 
\]
is bounded in $\overline{\ab^+}$ for any $R \geqslant 0$, 
where $\overline{N}(-, R)$ signifies the closed $R$-neighbourhood. 
\end{fact}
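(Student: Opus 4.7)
The plan is to recast properness of the $L$-action on $G/H$ in purely group-theoretic terms and then transfer the condition through the Cartan projection $\mu$, exploiting two soft properties of $\mu$: it is a proper map, and it is "coarsely bi-$K$-invariant with bounded distortion" under left/right multiplication by bounded elements.

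First, I would reformulate properness. The action of $L$ on $G/H$ is proper if and only if, for every compact set $S \subset G$, the intersection $L \cap SHS^{-1}$ (equivalently, $L \cap SHS$) is relatively compact in $G$. This is a standard translation of the fact that the map $L \times G/H \to G/H \times G/H$, $(\ell, gH) \mapsto (gH, \ell gH)$ is proper iff preimages of compact sets are compact, pulled back to $G$ via a Borel section. The second preliminary ingredient is the key lemma on Cartan projections: for each compact set $S \subset G$, there exists $R(S) \geqslant 0$ such that
\[
\|\mu(s_1 g s_2) - \mu(g)\| \leqslant R(S) \qquad \text{for all } g \in G,\ s_1, s_2 \in S.
\]
This follows from continuity and $K$-biinvariance of $\mu$, together with a standard compactness/continuity argument using the $KAK$ decomposition. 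The third ingredient is that $\mu \colon G \to \overline{\ab^+}$ is a proper map: a subset of $G$ is bounded iff its image under $\mu$ is bounded, again by $KAK$.

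With these in hand, both implications come out easily. For ($\Leftarrow$), assume $\mu(L) \cap \overline{N}(\mu(H), R)$ is bounded for every $R$; fix a compact $S \subset G$ and consider $\ell \in L \cap SHS$, so $\ell = s_1 h s_2$ with $h \in H$. Applying the bounded-distortion lemma gives $\mu(\ell) \in \overline{N}(\mu(h), R(S)) \subset \overline{N}(\mu(H), R(S))$, so $\mu(L \cap SHS)$ lies in $\mu(L) \cap \overline{N}(\mu(H), R(S))$, which is bounded by hypothesis. Properness of $\mu$ then gives that $L \cap SHS$ is relatively compact, and closedness of $L$ makes it compact. For ($\Rightarrow$), assume $L$ acts properly and suppose, for contradiction, that some $\mu(L) \cap \overline{N}(\mu(H), R)$ is unbounded. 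Pick $\ell_n \in L$, $h_n \in H$ with $\|\mu(\ell_n) - \mu(h_n)\| \leqslant R$ and $\mu(\ell_n) \to \infty$. Writing $\ell_n \in K \exp(\mu(\ell_n)) K$ and $h_n \in K \exp(\mu(h_n)) K$ in $KAK$ form and using that $\mu(\ell_n)$ and $\mu(h_n)$ commute in $\ab$, I can factor
\[
\ell_n \in K \cdot h_n \cdot K \cdot \exp\!\bigl(\mu(\ell_n) - \mu(h_n)\bigr) \cdot K,
\]
and the last exponential lies in the compact set $\exp(\overline{B}(0,R))$. Setting $S := K \cdot \exp(\overline{B}(0,R)) \cdot K$ (compact), we obtain $\ell_n \in L \cap SHS$; but properness of $\mu$ forces $\ell_n \to \infty$ in $G$, contradicting compactness of $L \cap SHS$.

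The main conceptual obstacle I anticipate is the bounded-distortion lemma for $\mu$; writing down a clean quantitative version requires a little care because $\mu$ is not a homomorphism and the control has to come from uniform continuity of the $KAK$ decomposition on compacta together with Weyl-chamber projection estimates. The rest is a bookkeeping exercise combining this lemma with the properness of $\mu$, and the $KAK$ factorization trick above is what reduces both directions to simple coarse-geometry statements in $\overline{\ab^+}$.
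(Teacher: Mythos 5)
The paper does not prove this statement at all: it is quoted as a Fact from Kobayashi \cite{Kob89,Kob96} and Benoist \cite{Ben96}, so there is no in-paper proof to compare against. Your architecture is exactly the standard one from those references (reformulate properness as compactness of $L \cap SHS^{-1}$ for compact $S$, use properness of $\mu$, and transfer via a bounded-distortion estimate for $\mu$), and both implications are carried out correctly given your key lemma; in particular the $KAK$ factorization $\ell_n \in K\, h_n\, K \exp(\mu(\ell_n)-\mu(h_n))\, K$ in the forward direction is a correct and clean way to produce the compact set $S$.

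The one genuine gap is the justification of the bounded-distortion lemma itself. You assert that the uniform bound $\|\mu(s_1 g s_2)-\mu(g)\|\leqslant R(S)$ for all $g\in G$ follows from ``continuity and $K$-biinvariance of $\mu$ together with a standard compactness/continuity argument''; it does not, because $g$ ranges over the noncompact group $G$, so no compactness argument can yield uniformity in $g$. The estimate one actually needs is the subadditivity property $\|\mu(g_1 g g_2)-\mu(g)\|\leqslant \|\mu(g_1)\|+\|\mu(g_2)\|$, which is a genuine theorem (Benoist~\cite[Lem.~4.2]{Ben96}; see also Kassel's thesis~\cite{Kas09}). Its proof requires real input: either one embeds $G$ in $\mathrm{GL}(n,\R)$ and uses inequalities for singular values of products of matrices, or one interprets $\mu(g^{-1}g')$ as a vector-valued distance on the symmetric space $G/K$ and invokes a nonpositive-curvature (Kostant convexity type) inequality of the form $\|\mu(g)-\mu(g')\|\leqslant\|\mu(g^{-1}g')\|$. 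You correctly identify this lemma as the crux, but the mechanism you propose for proving it would fail; with the subadditivity inequality supplied from the literature, the rest of your argument goes through.
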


For readability purpose, we shall write $\mu_j(\gamma)$ instead of $\mu(j(\gamma))$ whenever $j \colon \Gamma \to G$ is a group homomorphism and $\gamma \in \Gamma$. 

\subsection{Convex cocompact representations into \texorpdfstring{$\OO(n,1)$}{O(n, 1)}}\label{subsect:cc}

Throughout this subsection, let $\Gamma$ be a finitely generated group, and let 
$\lvert - \rvert \colon \Gamma \to \N$ 
be the word length function associated with a fixed finite and symmetric generating set of $\Gamma$. 
We write 
\[
\Hyp^n = \OO(n,1) / (\OO(n) \times \OO(1))
\]
for the $n$-dimensional real hyperbolic space. We always assume that $n \geqslant 2$ and that $\Gamma$ is infinite. 

\begin{definition}
A representation $j \colon \Gamma \to \OO(n,1)$ is called \emph{convex cocompact} 
if it has the following two properties: 
\begin{enumerate}[label = {\upshape (\roman*)}]
\item The kernel of $j$ is finite. 
\item There exists a nonempty $j(\Gamma)$-invariant closed convex subset of $\Hyp^n$ 
on which $j(\Gamma)$ acts cocompactly. 
\end{enumerate}
\end{definition}

We write $\Hom^\textup{cc}(\Gamma, \OO(n,1))$ for the subspace of $\Hom(\Gamma, \OO(n,1))$ 
consisting of convex cocompact representations. 

Let us recall two basic properties of convex cocompact representations: 

\begin{fact}[{see e.g.\ Bourdon~\cite[\S 1.8]{Bou95}}]\label{fact:cc-alternative-definition}
A representation $j \colon \Gamma \to \OO(n,1)$ is convex cocompact if and only if 
there exist $\ep, M > 0$ such that,
for any $\gamma \in \Gamma$, we have 
\[
\| \mu_j(\gamma) \| \geqslant \ep \lvert \gamma \rvert - M.
\]
\end{fact}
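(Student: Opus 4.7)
The plan is to interpret $\|\mu(g)\|$ as proportional to a hyperbolic displacement and then reduce the equivalence to the standard coarse-geometric characterization of convex cocompactness. Let $x_0 \in \Hyp^n$ be the basepoint stabilized by the maximal compact subgroup $K = \OO(n) \times \OO(1)$. Since $\ab$ is one-dimensional and the $A$-orbit of $x_0$ is a geodesic line, the Cartan decomposition $g = k_1 \exp(\mu(g)) k_2$ yields $d_{\Hyp^n}(x_0, g \cdot x_0) = c\,\|\mu(g)\|$ for some constant $c > 0$. Hence the asserted inequality $\|\mu_j(\gamma)\| \geqslant \ep \lvert \gamma \rvert - M$ is equivalent to saying that the orbit map $\Gamma \to \Hyp^n$, $\gamma \mapsto j(\gamma) \cdot x_0$, is a quasi-isometric embedding; the matching upper bound $\|\mu_j(\gamma)\| \leqslant C \lvert \gamma \rvert$ is automatic from the triangle inequality applied to the generators.

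For the forward direction, I would assume $j$ is convex cocompact, pick a nonempty $j(\Gamma)$-invariant closed convex subset $C \subset \Hyp^n$ on which $j(\Gamma)$ acts cocompactly, and a basepoint $x_0 \in C$. Convexity of $C$ ensures that distances in $C$ agree with distances in $\Hyp^n$, so $(C, d_{\Hyp^n}|_C)$ is a proper geodesic space. Since $\ker j$ is finite by hypothesis and $j(\Gamma)$ acts properly and cocompactly on $C$, the \v{S}varc--Milnor lemma gives that $\gamma \mapsto j(\gamma) \cdot x_0$ is a quasi-isometric embedding, which by the proportionality above produces the desired inequality.

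For the backward direction, finiteness of $\ker j$ is immediate: any $\gamma \in \ker j$ satisfies $\mu_j(\gamma) = 0$, forcing $\lvert \gamma \rvert \leqslant M/\ep$. To produce the required convex subset, I would let $C \subset \Hyp^n$ be the closed convex hull of the limit set $\Lambda(j(\Gamma)) \subset \partial \Hyp^n$; this set is nonempty (since $\Gamma$ is infinite, the orbit $j(\Gamma) \cdot x_0$ is unbounded) and manifestly $j(\Gamma)$-invariant and closed. It remains to show that $j(\Gamma)$ acts cocompactly on $C$. Here one invokes the Morse stability lemma for quasi-geodesics in the negatively curved space $\Hyp^n$: the orbit map, being a quasi-isometric embedding, sends geodesics in the Cayley graph to quasi-geodesics that fellow-travel with ambient geodesics joining limit points. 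A standard argument then forces $C$ to lie within a uniformly bounded neighbourhood of $j(\Gamma) \cdot x_0$, from which cocompactness follows.

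The principal obstacle is the cocompactness step in the backward direction. It relies on the Morse lemma for quasi-geodesics in negative curvature, together with a classical argument (originating in the theory of Kleinian groups) that propagates bounded tracking of geodesics to the entire convex hull of the limit set. Everything else reduces to routine coarse geometry, which is why this equivalence is typically left to references such as Bourdon~\cite{Bou95} that work uniformly in the CAT($-1$) setting.
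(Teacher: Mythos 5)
The paper offers no proof of this statement: it is imported as a known Fact with a pointer to Bourdon, so there is no in-paper argument to compare against. Your proposal is the standard proof of that cited equivalence and is correct in outline: in rank one $\|\mu(g)\|$ is proportional to the displacement $d(x_0, g\cdot x_0)$, so the inequality says exactly that the orbit map is a quasi-isometric embedding; the \v{S}varc--Milnor lemma gives the forward direction, and the Morse lemma plus the bounded-neighbourhood property of the convex hull of the limit set gives the converse. The only point worth tightening is the nonemptiness of that convex hull: an unbounded orbit alone gives only one limit point, and you need two, which follows because a parabolic (or elliptic) group has displacement growing at most logarithmically and would violate the linear lower bound, so $j(\Gamma)$ must contain a hyperbolic isometry.
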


\begin{fact}[{see e.g.\ Bowditch~\cite[Prop.~4.1]{Bow98}}]\label{fact:cc-open}
The subspace $\Hom^\textup{cc}(\Gamma, \OO(n,1))$ is open in $\Hom(\Gamma, \OO(n,1))$.
\end{fact}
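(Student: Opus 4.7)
The plan is to use the characterization in Fact~\ref{fact:cc-alternative-definition}: convex cocompactness of $j$ is equivalent to the linear lower bound $\|\mu_j(\gamma)\| \geqslant \varepsilon |\gamma| - M$ on the Cartan projection. Geometrically, this says that the orbit map $\phi_j \colon (\Gamma, d_S) \to (\Hyp^n, d)$, sending $\gamma$ to $j(\gamma) \cdot o$ for a fixed basepoint $o$, is a quasi-isometric embedding, since $\|\mu(g)\|$ is proportional to $d_{\Hyp^n}(o, g \cdot o)$. So the task reduces to showing that this quasi-isometric embedding property is open in $\Hom(\Gamma, \OO(n,1))$.

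The topology on $\Hom(\Gamma, \OO(n,1))$ is controlled by evaluation on the finite generating set $S$, so $j$ close to $j_0$ means $d(j(s) \cdot o, j_0(s) \cdot o) < \eta$ for every $s \in S$, with $\eta$ as small as desired. For any geodesic word $\gamma = s_1 s_2 \cdots s_n$ in $\Gamma$, I would consider the piecewise-geodesic path in $\Hyp^n$ with vertices $x_k^j = \phi_j(s_1 \cdots s_k)$. Each edge has uniformly bounded length (since $j(s)$ ranges in a bounded neighborhood of $\{j_0(s) \mid s \in S\}$), and the \emph{local} increments $d(x_{k-1}^j, x_k^j)$ are within $O(\eta)$ of the corresponding increments for $j_0$. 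However, these small local perturbations can accumulate along long words, so $x_k^j$ need not be close to $x_k^{j_0}$ globally.

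The key step, and the main obstacle, is to conclude that the polygonal path $\phi_j$ is nonetheless a uniform quasi-geodesic. This relies on the $\delta$-hyperbolicity of $\Hyp^n$ and takes the form of a shadowing / Morse-type lemma: a piecewise-geodesic path in a Gromov-hyperbolic space whose local structure is a sufficiently small perturbation of a uniform $(\lambda_0, C_0)$-quasi-geodesic is itself a uniform quasi-geodesic, with slightly worse constants depending only on $\delta$, $\lambda_0$, $C_0$, and the edge lengths. Applying this to $\phi_j$, whose local structure perturbs the known $(\lambda_0, C_0)$-quasi-geodesic $\phi_{j_0}$, gives the desired lower bound for $j$, hence $j \in \Hom^{\textup{cc}}(\Gamma, \OO(n,1))$ by Fact~\ref{fact:cc-alternative-definition}.

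An alternative, more structural route would be to invoke the theory of Anosov representations of Labourie and Guichard--Wienhard: convex cocompact representations into a rank-one Lie group such as $\OO(n,1)$ are precisely the $P$-Anosov representations (where $P$ is the minimal parabolic), and the set of Anosov representations is known to be open in $\Hom(\Gamma, G)$ for any reductive $G$. In the rank-one setting at hand, however, the direct Morse-lemma argument sketched above is sufficient and more elementary.
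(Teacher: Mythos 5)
The paper does not prove this statement: it is quoted as a Fact with a reference to Bowditch~\cite[Prop.~4.1]{Bow98}, so there is no internal proof to compare against. Your sketch follows the standard route found in that literature --- reformulate convex cocompactness as the orbit map being a quasi-isometric embedding (which is exactly Fact~\ref{fact:cc-alternative-definition}, and note that the resulting linear lower bound also forces the kernel to be finite, so both clauses of the definition are captured at once) and then apply a local-to-global principle for quasi-geodesics in the $\delta$-hyperbolic space $\Hyp^n$. This is the right argument, and the alternative via Anosov representations is also valid.

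One point needs tightening. As you state it, the key lemma depends ``only on $\delta$, $\lambda_0$, $C_0$, and the edge lengths,'' and your preceding discussion only controls the single-edge increments $d(x_{k-1}^j,x_k^j)$. A Morse-type lemma under that hypothesis alone is false: a concatenation of geodesic edges of the correct lengths can still backtrack and fail to be a quasi-geodesic with uniform constants. The correct statement is the local-to-global principle at a definite scale: for the target constants there is an $L>0$ (depending on $\delta$, $\lambda_0$, $C_0$) such that every $L$-local $(\lambda_0,C_0+2\eta')$-quasi-geodesic is a global quasi-geodesic. To verify the $L$-local hypothesis for $j$ near $j_0$ you must control all subwords of length at most $L$, not just single generators; this is where the argument actually closes, and it does so easily because there are only finitely many words $w$ with $\lvert w\rvert\leqslant L$ and each evaluation map $j\mapsto j(w)\cdot o$ is continuous, so every length-$L$ window of the $j$-orbit path is, after translating by $j(s_1\cdots s_k)$, uniformly close to the corresponding window for $j_0$ and hence a quasi-geodesic with slightly degraded constants. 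With that substitution for your ``shadowing'' step, the proof is complete.
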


\begin{example}
Let $N \geqslant n \geqslant 2$, and let $\Gamma$ be a cocompact lattice of $\OO(n,1)$. 
It follows immediately from Fact~\ref{fact:cc-open} that any small deformation of the composite 
\[
\Gamma \hookrightarrow \OO(n,1) \hookrightarrow \OO(N,1),
\]
where the latter embedding is the standard block-diagonal one, 
is a convex cocompact representation of $\Gamma$ into $\OO(N,1)$. 
\end{example}

Now, let us briefly summarize (some part of) the work by Gu\'eritaud--Kassel~\cite{GK17}, 
which we shall use in Section~\ref{sect:proof-cocHn}. 
Note that, for $n=2$, the results below are proved earlier in Kassel's Ph.D.\ thesis~\cite[Chap.~5]{Kas09}: 

\begin{fact}[{Gu\'eritaud--Kassel~\cite[Lem.~4.7~(1)]{GK17}}]\label{fact:GK1}
Let $j \colon \Gamma \to \OO(n,1)$ be a convex cocompact representation and 
$\rho \colon \Gamma \to \OO(n,1)$ a representation. 
Then, there exists a $(j, \rho)$-equivariant Lipschitz map from $\Hyp^n$ to itself. 
\end{fact}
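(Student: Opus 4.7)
The plan is to use convex cocompactness to reduce the problem to constructing a $(j,\rho)$-equivariant Lipschitz map on the convex core, and then to extend via nearest-point projection. Concretely, let $C \subset \Hyp^n$ be a nonempty $j(\Gamma)$-invariant closed convex subset on which $j(\Gamma)$ acts cocompactly; such a $C$ exists by definition. The nearest-point projection $\pi \colon \Hyp^n \to C$ is $1$-Lipschitz and $j(\Gamma)$-equivariant, so if I can produce a $(j,\rho)$-equivariant $L$-Lipschitz map $f_C \colon C \to \Hyp^n$, then $f = f_C \circ \pi$ will be the desired map.

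To build $f_C$, I would fix basepoints $x_0 \in C$ and $y_0 \in \Hyp^n$, and consider the $j(\Gamma)$-orbit $\mathcal{O} = \{j(\gamma) x_0 : \gamma \in \Gamma\} \subset C$, which is discrete since $j(\Gamma)$ acts properly discontinuously. I would form the $j(\Gamma)$-equivariant Delaunay cell decomposition of $C$ with nuclei $\mathcal{O}$ (equivalently, use the dual Voronoi diagram), and define $f_C$ on the $0$-skeleton by the equivariance-forced rule $f_C(j(\gamma) x_0) = \rho(\gamma) y_0$. This is well-defined up to the (finite) kernel of $j$, which I can harmlessly absorb by choosing $x_0$ in general position. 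I would then extend $f_C$ from vertices to each Delaunay cell by geodesic barycentric interpolation in $\Hyp^n$. The resulting map is continuous and $(j,\rho)$-equivariant by construction. (A variant using a smooth $j(\Gamma)$-equivariant partition of unity $\{w_\gamma\}$ supported in balls around $j(\gamma) x_0$ and hyperbolic weighted barycenters would work equally well.)

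The Lipschitz estimate is what the convex cocompactness hypothesis is really doing for us. First, cocompactness of $j(\Gamma) \curvearrowright C$ gives a uniform upper bound $D$ on the diameters of Delaunay cells and a uniform upper bound on their combinatorial complexity. Second, Fact~\ref{fact:cc-alternative-definition} gives a lower bound $\|\mu_j(\gamma)\| \geqslant \ep \lvert \gamma \rvert - M$, which implies that adjacent vertices $j(\gamma) x_0$ and $j(\gamma') x_0$ of the Delaunay complex correspond to group elements with $\lvert \gamma^{-1} \gamma' \rvert \leqslant C_1$ for some $C_1$ independent of the edge. Third, by the triangle inequality on $\Hyp^n$ applied to a fixed finite generating set, $d(\rho(\gamma) y_0, \rho(\gamma') y_0) \leqslant L_\rho \lvert \gamma^{-1}\gamma' \rvert$ where $L_\rho = \max_{s} d(\rho(s) y_0, y_0)$. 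Combining these, adjacent vertices lie at distance at least some $\ep'>0$ in $C$ but their images are separated by at most $L_\rho C_1$ in $\Hyp^n$, yielding a uniform Lipschitz bound on the $1$-skeleton.

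The main obstacle, which I would handle last, is upgrading the Lipschitz bound from edges of the Delaunay complex to the whole of $C$ through the interior of the cells. This reduces to showing that geodesic barycentric interpolation over simplices of uniformly bounded diameter in $\Hyp^n$ has Lipschitz constant controlled by the size of the vertex data; this is a purely local statement in hyperbolic geometry, most easily checked in the Klein model (where the geodesic barycenter of bounded configurations depends smoothly on the vertices and weights) and then transferred back. Since only finitely many cell shapes appear up to the $j(\Gamma)$-action, a uniform Lipschitz constant exists, completing the construction of $f_C$ and hence of $f$.
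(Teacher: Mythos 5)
This statement is imported verbatim from Gu\'eritaud--Kassel \cite[Lem.~4.7~(1)]{GK17}; the paper records it as a \emph{Fact} and gives no proof, so there is nothing internal to compare you against. Your construction is, in substance, the standard one (and close to what Gu\'eritaud--Kassel actually do): restrict attention to a convex subset on which $j(\Gamma)$ acts cocompactly, build an equivariant map there by sending the orbit of a basepoint $x_0$ to the $\rho$-orbit of a basepoint $y_0$ and interpolating, control the Lipschitz constant using cocompactness together with the coarse lower bound of Fact~\ref{fact:cc-alternative-definition}, and precompose with the $1$-Lipschitz equivariant nearest-point projection. That outline is sound.

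One step is genuinely wrong as written: the well-definedness of $j(\gamma)x_0 \mapsto \rho(\gamma)y_0$. Choosing $x_0$ in general position only guarantees that $j(\gamma)x_0 = j(\gamma')x_0$ forces $\gamma^{-1}\gamma' \in \ker j$; it does nothing about the image side, where you still need $\rho(\delta)y_0 = y_0$ for every $\delta$ in the (possibly nontrivial, finite) kernel of $j$. No choice of $x_0$ repairs this. The correct fix is on the $y_0$ side: $\rho(\ker j)$ is a finite group of isometries of $\Hyp^n$, hence has a fixed point (e.g.\ the circumcentre of any orbit, by nonpositive curvature), and you should take $y_0$ to be such a fixed point. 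With that repair the rest goes through, although I would encourage you to replace the Delaunay complex by the partition-of-unity variant you mention in passing: the orbit is only cocompact in $C$, not in $\Hyp^n$, so Delaunay cells meeting $C$ need not have uniformly bounded circumradius, whereas covering $C$ by uniformly bounded balls around orbit points, taking an equivariant Lipschitz partition of unity with uniformly bounded multiplicity, and mapping to weighted barycentres of the points $\rho(\gamma)y_0$ gives the uniform Lipschitz bound directly and avoids the cell-geometry issues you defer to the end.
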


For $j$ and $\rho$ as above, define $C_\textup{Lip}(j, \rho) \geqslant 0$ to be the \emph{best Lipschitz constant}, i.e.\ 
the infimum of $c \geqslant 0$
for which there exists a $(j, \rho)$-equivariant $c$-Lipschitz map 
from $\Hyp^n$ to itself. 
We say that $\rho$ is \emph{strictly dominated} by $j$ if $C_\textup{Lip}(j, \rho) < 1$. 

\begin{remark}
In general, the infimum may not be attained. See \cite[\S 10.3]{GK17}. 
\end{remark}

\begin{fact}[{Gu\'eritaud--Kassel~\cite[Prop.~1.5]{GK17}}]\label{fact:GK2}
The following map is continuous: 
\[
C_\textup{Lip} \colon \Hom^\textup{cc}(\Gamma, \OO(n,1)) \times \Hom(\Gamma, \OO(n,1)) \to \R_{\geqslant 0}, \qquad 
(j, \rho) \mapsto C_\textup{Lip}(j, \rho).
\]
\end{fact}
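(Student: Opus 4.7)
The plan is to reduce the continuity of $C_\textup{Lip}$ to the continuity of translation lengths, via a spectral characterization of the best Lipschitz constant. Writing $\lambda(g)$ for the hyperbolic translation length of $g \in \OO(n,1)$ on $\Hyp^n$, the target identity is
\[
C_\textup{Lip}(j, \rho) \;=\; \sup_{\gamma \in \Gamma,\ \lambda(j(\gamma)) > 0} \frac{\lambda(\rho(\gamma))}{\lambda(j(\gamma))}.
\]
The inequality $\geqslant$ is elementary: any $(j,\rho)$-equivariant $c$-Lipschitz map sends the axis of $j(\gamma)$ into a $\rho(\gamma)$-invariant subset on which $\rho(\gamma)$ translates by at most $c\,\lambda(j(\gamma))$, forcing $\lambda(\rho(\gamma)) \leqslant c\,\lambda(j(\gamma))$. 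The reverse inequality is the heart of the argument and extends Thurston's stretch-factor theorem from the Fuchsian setting to general convex cocompact $j$: one builds near-optimal equivariant Lipschitz maps as Arzel\`a-Ascoli limits of piecewise geodesic-stretch constructions along sequences of elements realizing the supremum, using an ``equivariant Kirszbraun'' extension off the convex core.

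Granted this identity, \emph{lower semicontinuity} is essentially formal. Fact~\ref{fact:cc-alternative-definition} implies $\lambda(j_0(\gamma)) > 0$ for every infinite-order $\gamma$ when $j_0$ is convex cocompact, and Fact~\ref{fact:cc-open} propagates this to a neighbourhood of $j_0$ in $\Hom^\textup{cc}(\Gamma, \OO(n,1))$. Each ratio $(j,\rho) \mapsto \lambda(\rho(\gamma))/\lambda(j(\gamma))$ is therefore continuous on such a neighbourhood, and a pointwise supremum of continuous functions is always lower semicontinuous, so $\liminf_{(j,\rho) \to (j_0,\rho_0)} C_\textup{Lip}(j,\rho) \geqslant C_\textup{Lip}(j_0,\rho_0)$.

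The main obstacle is \emph{upper semicontinuity}, because $C_\textup{Lip}$ is defined as an infimum of Lipschitz constants that need not be attained. Given $\ep > 0$, choose a $(j_0,\rho_0)$-equivariant map $f_0$ of Lipschitz constant $c \leqslant C_\textup{Lip}(j_0,\rho_0) + \ep$ (supplied by Fact~\ref{fact:GK1}). For $(j,\rho)$ close to $(j_0,\rho_0)$, I would construct a $(j,\rho)$-equivariant Lipschitz map by picking a compact fundamental domain $D \subset \Hyp^n$ for $j_0(\Gamma)$ acting on its convex core, transporting $f_0|_D$ through the Sullivan-style structural-stability homeomorphism that conjugates $j_0$ to $j$ near the limit set, then extending $(j,\rho)$-equivariantly and smoothing across translates of $\partial D$ by a partition of unity. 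The delicate estimate, and the technical heart of the argument, is that the additional Lipschitz contribution from smoothing scales linearly with $\max_{\gamma \in S} (\|\rho(\gamma)\rho_0(\gamma)^{-1} - 1\| + \|j(\gamma)j_0(\gamma)^{-1} - 1\|)$ for a fixed finite generating set $S$, and therefore tends to $0$ as $(j,\rho) \to (j_0,\rho_0)$. This yields $\limsup C_\textup{Lip}(j,\rho) \leqslant c \leqslant C_\textup{Lip}(j_0,\rho_0) + \ep$, and letting $\ep \to 0$ finishes the proof.
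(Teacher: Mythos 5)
First, a point of comparison: the paper does not prove this statement at all --- it is imported as a black box from Gu\'eritaud--Kassel~\cite{GK17}, so there is no internal proof to measure yours against. Judged on its own terms, your argument has a genuine gap at its foundation. The spectral identity $C_\textup{Lip}(j,\rho)=\sup_{\gamma}\lambda(\rho(\gamma))/\lambda(j(\gamma))$ on which your lower semicontinuity rests is false in general: only the inequality $\geqslant$ holds unconditionally, and equality is known only when $C_\textup{Lip}(j,\rho)\geqslant 1$. This is precisely why Fact~\ref{fact:GK3}~(2) carries the hypothesis $C_\textup{Lip}(j,\rho)\geqslant 1$, and Gu\'eritaud--Kassel exhibit in their \S 10 situations with $C_\textup{Lip}<1$ where the best Lipschitz constant strictly exceeds the supremum of length (or Cartan-projection) ratios. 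Consequently your semicontinuity argument only yields $\liminf C_\textup{Lip}(j,\rho)\geqslant\sup_{\gamma}\lambda(\rho_0(\gamma))/\lambda(j_0(\gamma))$, which can be strictly smaller than $C_\textup{Lip}(j_0,\rho_0)$ exactly in the contracting regime $C_\textup{Lip}(j_0,\rho_0)<1$ --- and that is the only regime in which this paper actually invokes continuity (Lemma~\ref{lem:DGK3} uses it to conclude $C_\textup{Lip}(\rho_0,\rho_t)\leqslant 1/\cosh t<1$).

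Lower semicontinuity below the threshold $1$ therefore needs a different mechanism, for instance a compactness argument: if $(j_k,\rho_k)\to(j_0,\rho_0)$ and there were $(j_k,\rho_k)$-equivariant $c_k$-Lipschitz maps with $c_k\to c<C_\textup{Lip}(j_0,\rho_0)$, one would want to extract an Arzel\`a--Ascoli limit that is $(j_0,\rho_0)$-equivariant and $c$-Lipschitz; the real work is ruling out escape of the images to infinity, which is where convex cocompactness of $j_0$ and the structure of $\rho_0(\Gamma)$ (e.g.\ whether it fixes a point of $\partial\Hyp^n$) must enter, and your proposal does not address this. Your upper semicontinuity sketch --- perturbing a near-optimal $(j_0,\rho_0)$-equivariant map over a compact fundamental domain for the convex core and controlling the loss in terms of the distance between the representations --- is the right general shape, but the key estimate that the smoothing across translates of $\partial D$ costs only an amount tending to $0$ as $(j,\rho)\to(j_0,\rho_0)$ is asserted rather than proved, and averaging Lipschitz maps via a partition of unity in $\Hyp^n$ requires some care (geodesic barycenters) to keep the constants under control.
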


In the above situation, we define 
$C_\textup{Car}(j, \rho) \in \R_{\geqslant 0}$ to be the infimum of $c \geqslant 0$ satisfying
\[
\sup_{\gamma \in \Gamma} \left( \| \mu_\rho(\gamma) \| - c \| \mu_j(\gamma) \| \right) < \infty. 
\]
It follows easily from the definition that $C_\textup{Lip}(j, \rho) \geqslant C_\textup{Car}(j, \rho)$. We thus have:

\begin{lemma}\label{lem:GK4}
Let $j \colon \Gamma \to \OO(n,1)$ be a convex cocompact representation and 
$\rho \colon \Gamma \to \OO(n,1)$ a representation. 
If $c > C_\textup{Lip}(j, \rho)$, there exists $M > 0$ such that, for any $\gamma \in \Gamma$, we have
\[
c \| \mu_j(\gamma) \| + M \geqslant \| \mu_\rho(\gamma) \|,
\]
where $\mu_j = \mu \circ j$ and $\mu_\rho = \mu \circ \rho$. 
\end{lemma}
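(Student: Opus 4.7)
My plan is to extract the constant $M$ directly from a $(j,\rho)$-equivariant $c$-Lipschitz map, using the triangle inequality on $\Hyp^n$. This is essentially the observation preceding the lemma (that $C_\textup{Lip}(j,\rho) \geqslant C_\textup{Car}(j,\rho)$) unpacked enough to produce $M$.

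First, I would pick a suitable Lipschitz map. Since $c$ strictly exceeds the infimum $C_\textup{Lip}(j,\rho)$, there exists some $c' < c$ admitting a $(j,\rho)$-equivariant $c'$-Lipschitz map $f \colon \Hyp^n \to \Hyp^n$; such an $f$ is \emph{a fortiori} $c$-Lipschitz. (The existence of some equivariant Lipschitz map is guaranteed by Fact~\ref{fact:GK1}.) Next, I would exploit the rank-one geometry of $\OO(n,1)$: at the basepoint $p \in \Hyp^n$ stabilized by $K = \OO(n) \times \OO(1)$, one has $\|\mu(g)\| = d_{\Hyp^n}(g \cdot p, p)$ for all $g \in \OO(n,1)$ (with the normalization of $B$ fixed throughout the paper). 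Combining equivariance, the Lipschitz estimate on $f$, and two applications of the triangle inequality --- using that $\rho(\gamma)$ acts as a hyperbolic isometry --- should yield
\[
\|\mu_\rho(\gamma)\| = d_{\Hyp^n}(\rho(\gamma) \cdot p, p) \leqslant 2\, d_{\Hyp^n}(p, f(p)) + c\,\|\mu_j(\gamma)\|
\]
uniformly in $\gamma \in \Gamma$, so taking $M := 2\, d_{\Hyp^n}(p, f(p))$ finishes the argument.

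I do not anticipate any substantial obstacle. The proof amounts to a single triangle-inequality estimate, and the two mild technicalities --- that the best Lipschitz constant need not be attained, and that the identification $\|\mu(g)\| = d_{\Hyp^n}(g \cdot p, p)$ depends on the fixed normalization of $B$ --- are both harmless: the first is sidestepped by choosing $c' \in (C_\textup{Lip}(j,\rho), c)$ as above, and the second only rescales the additive constant $M$.
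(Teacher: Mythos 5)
Your proof is correct and follows essentially the same route as the paper: the paper's one-line proof invokes the inequality $C_\textup{Lip}(j,\rho) \geqslant C_\textup{Car}(j,\rho)$ together with the definition of $C_\textup{Car}$ as an infimum, and your triangle-inequality computation with an equivariant $c'$-Lipschitz map ($c' < c$) is precisely the standard argument behind that inequality, just written out explicitly. The two technicalities you flag (non-attainment of the infimum, normalization of $B$) are indeed harmless and handled correctly.
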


\begin{proof}
This follows from the inequality $C_\textup{Lip}(j, \rho) \geqslant C_\textup{Car}(j, \rho)$ 
and the definition of $C_\textup{Car}$. 
\end{proof}

Furthermore, the following holds: 

\begin{fact}[{Gu\'eritaud--Kassel~\cite[\S 7 and Cor.~1.12]{GK17}}]\label{fact:GK3}
Let $j \colon \Gamma \to \OO(n,1)$ be a convex cocompact representation and 
$\rho \colon \Gamma \to \OO(n,1)$ a representation. 
Then, we have the following: 
\begin{enumerate}[label = {\upshape (\arabic*)}]
\item $C_\textup{Car}(j, \rho) < 1$ if and only if $\rho$ is strictly dominated by $j$ 
(i.e.\ $C_\textup{Lip}(j, \rho) < 1)$. 
\item If $C_\textup{Lip}(j, \rho) \geqslant 1$, then $C_\textup{Lip}(j, \rho) = C_\textup{Car}(j, \rho)$. 
\end{enumerate}
\end{fact}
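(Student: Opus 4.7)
The plan is to prove $C_\textup{Car}(j,\rho) \leq C_\textup{Lip}(j,\rho)$ unconditionally and then upgrade this to the two equivalences claimed. Fix a basepoint $x_0 \in \Hyp^n$ stabilised by $K = \OO(n) \times \OO(1)$, so that $\|\mu_g(\gamma)\|$ equals $d(x_0, g(\gamma) x_0)$ up to a fixed positive normalisation constant. If $f \colon \Hyp^n \to \Hyp^n$ is a $(j,\rho)$-equivariant $c$-Lipschitz map and $y_0 := f(x_0)$, then
\[
d(x_0, \rho(\gamma) x_0) \leq 2 d(x_0, y_0) + d\bigl(f(x_0), f(j(\gamma) x_0)\bigr) \leq 2 d(x_0, y_0) + c \cdot d(x_0, j(\gamma) x_0),
\]
so $\|\mu_\rho(\gamma)\| - c \|\mu_j(\gamma)\|$ is bounded uniformly in $\gamma$. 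Infimising over $c > C_\textup{Lip}(j,\rho)$ yields $C_\textup{Car} \leq C_\textup{Lip}$, which already handles the $(\Leftarrow)$ direction of (1) and the inequality $C_\textup{Lip} \geq C_\textup{Car}$ in (2).

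The $(\Rightarrow)$ direction of (1) requires constructing a contraction from a Cartan bound: assume $\|\mu_\rho(\gamma)\| \leq c \|\mu_j(\gamma)\| + M$ with $c < 1$, and produce an equivariant map with Lipschitz constant strictly less than $1$. Here I would use convex cocompactness of $j$ to select a nonempty closed convex $j(\Gamma)$-invariant subset $C \subset \Hyp^n$ with $j(\Gamma) \backslash C$ compact, fix a compact fundamental domain $D \subset C$, and consider the equivariant orbit net $j(\Gamma) \cdot x_0$. Combining the Cartan-type bound with Fact~\ref{fact:cc-alternative-definition} (converting word length into the geometric estimate $\|\mu_j(\gamma)\| \geq \ep |\gamma| - M'$), one checks that $j(\gamma) x_0 \mapsto \rho(\gamma) y_0$ is $c'$-Lipschitz on this orbit for some $c' \in (c,1)$. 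The essential technical step is then an equivariant Kirszbraun-type extension theorem for hyperbolic space, promoting the orbit map to a $(j,\rho)$-equivariant Lipschitz map on all of $\Hyp^n$ without meaningfully increasing the Lipschitz constant.

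For the converse $C_\textup{Lip} \leq C_\textup{Car}$ in (2), I would run a Thurston-style stretch-locus argument. Choose a sequence of $(j,\rho)$-equivariant $c_k$-Lipschitz maps $f_k$ with $c_k \downarrow C_\textup{Lip} \geq 1$. Using convex cocompactness to work in a quotient of a thickening of the convex core, extract an asymptotic geodesic lamination $\Lambda$ on which the local stretch equals $C_\textup{Lip}$. On each leaf of $\Lambda$ preserved by some primitive $\gamma \in \Gamma$, the ratio of translation lengths of $\rho(\gamma)$ and $j(\gamma)$ on their respective axes must equal $C_\textup{Lip}$; since translation length is the asymptotic growth rate of $\|\mu(\gamma^k)\|$ as $k \to \infty$, this forces $\|\mu_\rho(\gamma^k)\| / \|\mu_j(\gamma^k)\| \to C_\textup{Lip}$ for such $\gamma$, hence $C_\textup{Car} \geq C_\textup{Lip}$.

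The principal obstacle is the equivariant Kirszbraun extension used for part~(1): the classical Euclidean Kirszbraun theorem is not directly available in negatively curved geometry, and any extension must remain compatible with the non-compact actions of $j(\Gamma)$ and $\rho(\Gamma)$ without deteriorating the Lipschitz constant on the overlaps of translates of the fundamental domain. The stretch-lamination analysis underlying (2) is likewise delicate, because optimal Lipschitz maps need not attain the infimal constant, so one must argue with sequences of near-optimal maps and carefully extract a limit on which the equality of translation-length ratios can be read off.
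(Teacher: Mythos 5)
This statement is a \textbf{Fact} quoted from Gu\'eritaud--Kassel \cite{GK17}; the paper gives no proof of it, so your attempt can only be measured against the cited source. Your unconditional inequality $C_\textup{Car}(j,\rho) \leqslant C_\textup{Lip}(j,\rho)$ via the basepoint orbit is correct and is exactly the easy observation the paper records just before Lemma~\ref{lem:GK4} (modulo choosing $y_0$ to be fixed by $\rho$ of the finite stabilizer of $x_0$, so that the orbit map is well defined, and using discreteness of $j(\Gamma)$ to bound $d(x_0, j(\gamma)x_0)$ away from $0$).

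The genuine gap is in your treatment of the forward direction of (1). The ``equivariant Kirszbraun-type extension theorem'' you invoke does not exist for Lipschitz constants below $1$: the Kirszbraun--Valentine/Lang--Schroeder extension theorem in curvature $-1$ (whose equivariant version Gu\'eritaud--Kassel do prove and use) requires the constant to be at least $1$. Rescaling the source metric by $c'<1$ produces curvature $-1/c'^2 < -1$, violating the lower curvature bound the extension theorem needs, and the extension property genuinely fails for contractions in negative curvature --- this is precisely why producing equivariant contractions is a substantial theorem rather than a soft extension argument, and why strict domination is nontrivial to achieve at all. The repair is structural rather than technical: (1)$(\Rightarrow)$ is a \emph{formal consequence} of (2) together with the easy inequality, since if $C_\textup{Car}(j,\rho)<1$ but $C_\textup{Lip}(j,\rho)\geqslant 1$, then (2) forces $C_\textup{Lip}(j,\rho)=C_\textup{Car}(j,\rho)<1$, a contradiction; no contraction needs to be built directly. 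All the real content is therefore in (2), where your stretch-locus sketch is the right idea (it is Gu\'eritaud--Kassel's maximally stretched lamination), but it too has a gap: you assume some leaf of the lamination is preserved by a primitive $\gamma\in\Gamma$, which need not happen, as the leaves may all be non-closed. One must instead use recurrence of long leaf segments in the compact convex-core quotient to close them up into axes of group elements whose stretch ratios are only approximately $C_\textup{Lip}(j,\rho)$, and then pass to the limit to conclude $C_\textup{Car}(j,\rho)\geqslant C_\textup{Lip}(j,\rho)$.
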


From this, one immediately obtains the following: 

\begin{lemma}\label{lem:GK5}
For any convex cocompact representation $j \colon \Gamma \to \OO(n,1)$, 
we have $C_\textup{Lip}(j, j) = 1$. 
\end{lemma}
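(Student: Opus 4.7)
The plan is to establish the two inequalities $C_\textup{Lip}(j,j)\leqslant 1$ and $C_\textup{Lip}(j,j)\geqslant 1$ separately; neither step should be hard.

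For the upper bound, the identity map $\id_{\Hyp^n}\colon\Hyp^n\to\Hyp^n$ is obviously $1$-Lipschitz and is tautologically $(j,j)$-equivariant, since $\id(j(\gamma)\cdot x)=j(\gamma)\cdot x=j(\gamma)\cdot\id(x)$ for every $\gamma\in\Gamma$ and $x\in\Hyp^n$. By the very definition of $C_\textup{Lip}$, this already gives $C_\textup{Lip}(j,j)\leqslant 1$.

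For the lower bound, I would use the inequality $C_\textup{Lip}(j,\rho)\geqslant C_\textup{Car}(j,\rho)$ (noted in the text just before Lemma~\ref{lem:GK4}) with $\rho=j$, which reduces the task to showing $C_\textup{Car}(j,j)\geqslant 1$. Fix any $c<1$; then the quantity inside the supremum defining $C_\textup{Car}(j,j)$ is $\|\mu_j(\gamma)\|-c\|\mu_j(\gamma)\|=(1-c)\|\mu_j(\gamma)\|$ with $1-c>0$. Since $\Gamma$ is infinite and $j$ has finite kernel (by convex cocompactness), there is a sequence $\gamma_k\in\Gamma$ with $|\gamma_k|\to\infty$. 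Fact~\ref{fact:cc-alternative-definition} then gives $\|\mu_j(\gamma_k)\|\geqslant\varepsilon|\gamma_k|-M\to\infty$, so $\sup_{\gamma\in\Gamma}(1-c)\|\mu_j(\gamma)\|=\infty$. Hence no $c<1$ works, and $C_\textup{Car}(j,j)\geqslant 1$. Combining, $C_\textup{Lip}(j,j)\geqslant C_\textup{Car}(j,j)\geqslant 1$.

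The two inequalities together yield $C_\textup{Lip}(j,j)=1$. There is no real obstacle here: the only thing one needs to notice is that convex cocompactness of $j$, together with the infiniteness of $\Gamma$, forces $\|\mu_j(\gamma)\|$ to be unbounded on $\Gamma$, which is exactly what prevents $C_\textup{Car}(j,j)$ from dropping below $1$. (One could equally well argue by contradiction via Lemma~\ref{lem:GK4}: if $C_\textup{Lip}(j,j)<1$, then taking any $c$ with $C_\textup{Lip}(j,j)<c<1$ would give $\|\mu_j(\gamma)\|\leqslant c\|\mu_j(\gamma)\|+M$, i.e.\ $(1-c)\|\mu_j(\gamma)\|\leqslant M$, contradicting the unboundedness of $\|\mu_j(\gamma)\|$.)
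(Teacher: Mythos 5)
Your proof is correct. The lower bound is obtained exactly as in the paper: the text's proof reads ``this follows from $C_\textup{Car}(j,j)=1$ and Fact~\ref{fact:GK3}~(2)'', and the content of the claim $C_\textup{Car}(j,j)\geqslant 1$ is precisely your observation that $\|\mu_j(\gamma)\|$ is unbounded on the infinite group $\Gamma$ (via Fact~\ref{fact:cc-alternative-definition}), so that no $c<1$ can make the supremum finite. Where you genuinely diverge is the upper bound: the paper deduces $C_\textup{Lip}(j,j)\leqslant 1$ by invoking Fact~\ref{fact:GK3}~(2), i.e.\ the nontrivial Gu\'eritaud--Kassel result that $C_\textup{Lip}=C_\textup{Car}$ once $C_\textup{Lip}\geqslant 1$, whereas you simply exhibit the identity map of $\Hyp^n$ as a $(j,j)$-equivariant $1$-Lipschitz map. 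Your route is more elementary and self-contained on this point (it uses only the definition of $C_\textup{Lip}$, not the imported equality of the two constants), while the paper's citation-based argument is shorter given that Fact~\ref{fact:GK3} is already on the table for other purposes. Both are valid; nothing is missing from your argument.
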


\begin{proof}
This follows from $C_\textup{Car}(j,j) = 1$ and Fact~\ref{fact:GK3}~(2). 
\end{proof}

Later, Danciger--Gu\'eritaud--Kassel constructed small deformations of convex cocompact right-angled reflection groups 
in $\OO(n,1)$ which are strictly dominated by the original ones. 
For cocompact lattices, their results are summarized as follows: 

\begin{fact}[{Danciger--Gu\'eritaud--Kassel~\cite[\S 4]{DGK20}}]\label{fact:DGK1}
Let $(n, N, P)$ be one of the following: 
\begin{itemize}
\item $n = 2$, $N = 3$, and $P$ is a convex right-angled $2k$-gon in $\Hyp^2$ for some $k \geqslant 3$. 
\item $n = 4$, $N = 8$, and $P$ is the regular right-angled $120$-cell in $\Hyp^4$. 
\end{itemize}
Let $\Gamma$ be the cocompact lattice of $\OO(n,1)$ generated by the reflections 
with respect to the facets of $P$ in $\Hyp^n$. 
Then, there exist $\ep > 0$ and a continuous family of convex cocompact representations 
\[
\rho_t \colon \Gamma \to \OO(N,1) \quad (-\ep < t < \ep)
\]
satisfying the following two conditions: 
\begin{itemize}
\item $\rho_0$ is the composite 
\[
\Gamma \hookrightarrow \OO(n,1) \hookrightarrow \OO(N,1),
\]
where the latter embedding is the standard block-diagonal one. 
\item For any $0 < s \leqslant t < \ep$, 
there exists a $(\rho_s, \rho_t)$-equivariant $\frac{\cosh s}{\cosh t}$-Lipschitz map from $\Hyp^N$ to itself.
\end{itemize}
\end{fact}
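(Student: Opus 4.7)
The plan is to construct $\rho_t$ by bending the reflection hyperplanes of $P$ out of the totally geodesic copy $\Hyp^n \subset \Hyp^N$ in a coordinated way, and to obtain the equivariant Lipschitz map by gluing together explicit contractions on the tiles of the bent tessellation of $\Hyp^N$, with the Lipschitz constant coming from a basic $\cosh$-identity of hyperbolic geometry.

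First I would build the family $\rho_t$. Each generator $r_i$ of $\Gamma$ is the reflection in a facet $F_i$ of $P$, and under the block-diagonal embedding $\rho_0(r_i)$ is the reflection in the hyperplane $F_i^{(0)} \subset \Hyp^N$ that suspends $F_i$ in the $N-n$ extra orthogonal directions. Because $P$ is right-angled, adjacent hyperplanes $F_i^{(0)}, F_j^{(0)}$ meet perpendicularly, and the $N-n$ extra dimensions leave room to tilt each $F_i^{(0)}$ slightly while preserving all right-angle adjacencies. For $(n,N) = (2,3)$ the side graph of the $2k$-gon is bipartite, so alternate sides can be tilted into opposite half-spaces of $\Hyp^3 \setminus \Hyp^2$ through an angle $\theta(t)$ with $\theta(0) = 0$. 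For $(n,N) = (4,8)$ a more intricate scheme is needed: to each of the $120$ dodecahedral facets one assigns a tilt vector in the $4$-dimensional orthogonal complement of $\Hyp^4 \subset \Hyp^8$, exploiting the combinatorial symmetries of the right-angled $120$-cell so that every right-angle adjacency persists. In both cases, $\rho_t(r_i)$ is defined to be the reflection in the tilted hyperplane $F_i^{(t)}$. Since perpendicular reflections commute and each $\rho_t(r_i)$ has order two, $\rho_t$ is a homomorphism $\Gamma \to \OO(N,1)$, and convex cocompactness for small $|t|$ is automatic from Fact~\ref{fact:cc-open}.

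Second, for the Lipschitz map, the key hyperbolic-geometric input is the classical identity that, in $\Hyp^N$, the orthogonal projection from the equidistant hypersurface at distance $d$ from a hyperplane onto that hyperplane multiplies distances by $1/\cosh d$. After calibrating the bending so that the parameter $t$ corresponds to a transverse displacement of $t$ in this sense, I would build the map $f_{s,t} \colon \Hyp^N \to \Hyp^N$ tile by tile: on the tile $P^{(t)}$ of the $\rho_t(\Gamma)$-tessellation it equals the composition of the equidistant projection onto the corresponding tile of the unbent tessellation (contracting distances by $1/\cosh t$) with the equidistant inclusion of that tile into $P^{(s)}$ (expanding distances by $\cosh s$). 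The composition is $\cosh s / \cosh t$-Lipschitz on each tile; $\Gamma$-equivariance is built in by construction, and the right-angled gluing along bent facets makes $f_{s,t}$ continuous across tile boundaries.

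The main obstacle is the global Lipschitz bound. On each tile the bound is immediate from the one-variable $\cosh$-identity in normal coordinates, but gluing the tile-maps into a globally $\frac{\cosh s}{\cosh t}$-Lipschitz function requires (i) that the piecewise maps agree along shared bent facets, so that $f_{s,t}$ is continuous, and (ii) that the Lipschitz bound not degrade along the codimension-$\geqslant 2$ skeleton of the tessellation. Both reduce to the perpendicularity of adjacent bent hyperplanes: along a codimension-$2$ face the two neighbouring bent tiles share a common isometric facet, and perpendicularity guarantees that the equidistant-projection maps from the two sides compose correctly. A Rademacher-type argument then upgrades the piecewise Lipschitz estimate to a global one. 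The restriction to the $2k$-gon and the $120$-cell is precisely because these are the cases in which right-angle-preserving coordinated tilts of the required form can actually be constructed; for a generic right-angled hyperbolic polytope, no such deformation need exist.
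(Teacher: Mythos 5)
The paper does not actually prove this statement: it is imported as a black box from Danciger--Gu\'eritaud--Kassel~\cite[\S 4]{DGK20} (this is why it is labelled a ``Fact''), so your proposal can only be measured against their argument. Your overall strategy --- colour the facets of $P$, push the walls out of $\Hyp^n$ into extra dimensions indexed by the colours so that all right angles survive, then build the equivariant Lipschitz map tile by tile --- is indeed the strategy of \cite[Prop.~4.1]{DGK20}, and your dimension counts ($N-n=1$ for the bipartite $2k$-gon, $N-n=4$ for the $5$-colourable $120$-cell, and, via the four-colour theorem, $N-n=3$ in Lemma~\ref{lem:DGK2}) are the right ones. But your explicit recipe for $(n,N)=(2,3)$ is wrong. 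If two adjacent walls with orthogonal unit normals $\alpha_i,\alpha_j\in\R^{2,1}$ are both given a component along the single extra spacelike direction $e$, say new normals proportional to $\alpha_i+te$ and $\alpha_j-te$, then their inner product is $-t^2$, so the dihedral angle is no longer $\pi/2$ and the reflections no longer generate a discrete copy of $\Gamma$. The orthogonality of adjacent walls forces the displacement vectors of distinct colour classes to be pairwise orthogonal, and $c$ pairwise orthogonal vectors in the $(c-1)$-dimensional complement cannot all be nonzero: one colour class must stay fixed while the other $c-1$ classes move in pairwise orthogonal extra directions. ``Alternate sides tilted into opposite half-spaces'' therefore does not work.

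The more serious gap is the second bullet, which is the real content of the statement. The map you describe is defined on the tiles of the $\rho_t$-tessellation and lands in the $\rho_s$-tessellation, i.e.\ it is $(\rho_t,\rho_s)$-equivariant; with Lipschitz constant $\cosh s/\cosh t\leqslant 1$ it would show that $\rho_s$ is dominated by $\rho_t$, which is the opposite of what is required and in fact contradicts the inequality $C_\textup{Lip}(\rho_t,\rho_0)\geqslant\cosh t$ derived in Remark~\ref{rmk:continuously-many}. Swapping the roles of $s$ and $t$ in your ``project onto the unbent tile, then include'' recipe yields the constant $\cosh t/\cosh s\geqslant 1$, so the mechanism cannot produce the required bound in the required direction. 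Moreover, the $1/\cosh d$ identity you invoke concerns the induced metric on an equidistant \emph{hypersurface}, not a Lipschitz bound for a map between full-dimensional tiles of $\Hyp^N$, and the tiles of the deformed tessellation are not equidistant images of the undeformed ones. This quantitative step is exactly where the work of \cite{DGK20} lies, and the equidistant-projection heuristic does not replace it.
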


\begin{lemma}\label{lem:DGK2}
The conclusion of Fact~\ref{fact:DGK1} holds also for the following $(n, N, P)$: 
\begin{itemize}
\item $n = 3$, $N = 6$, and $P$ is a convex right-angled bounded polyhedron in $\Hyp^3$. 
\end{itemize}
\end{lemma}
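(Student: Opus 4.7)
The plan is to mimic the construction of Danciger--Gu\'eritaud--Kassel (Fact~\ref{fact:DGK1}), adapting it from the $2k$-gon and $120$-cell cases to a general bounded convex right-angled polyhedron $P \subset \Hyp^3$. Write $F_1, \dots, F_k$ for the facets of $P$, let $v_i \in \R^{3,1}$ be the unit spacelike normal to $F_i$, and let $s_i \in \OO(3,1)$ be the reflection in $v_i^\perp$. The right-angled hypothesis means that $\langle v_i, v_j \rangle = 0$ whenever $F_i \cap F_j$ is an edge of $P$, and $\Gamma = \langle s_1, \dots, s_k \rangle$ is the right-angled Coxeter group with these orthogonality relations.

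First I would produce the family $\rho_t$. Block-diagonally embed $\R^{3,1} \hookrightarrow \R^{6,1}$ on the first four coordinates, and replace each $v_i$ by a smooth curve $v_i(t) \in \R^{6,1}$ of unit spacelike vectors with $v_i(0)$ the image of $v_i$, chosen so that $\langle v_i(t), v_j(t) \rangle = 0$ for all $t$ whenever the same vanishing held at $t = 0$; the three ``transverse'' coordinates provide ample room to do this. Defining $\rho_t(s_i)$ to be the reflection in $v_i(t)^\perp$ then yields a well-defined homomorphism because the right-angled relations are preserved, $\rho_0$ is the composite $\Gamma \hookrightarrow \OO(3,1) \hookrightarrow \OO(6,1)$, and by Fact~\ref{fact:cc-open} each $\rho_t$ is convex cocompact for $\lvert t \rvert$ sufficiently small.

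The main work lies in producing, for each $0 < s \leqslant t < \ep$, a $(\rho_s, \rho_t)$-equivariant $\frac{\cosh s}{\cosh t}$-Lipschitz self-map of $\Hyp^6$. Following the strategy of \cite{DGK20}, I would fix a convex fundamental polyhedron $P_t \subset \Hyp^6$ for $\rho_t(\Gamma)$, construct the map locally between neighbouring $P_s$- and $P_t$-copies from the hyperbolic geometry of the bent reflection hyperplanes, and extend equivariantly; the sharp constant $\frac{\cosh s}{\cosh t}$ should drop out of an elementary hyperboloid calculation comparing two families of hyperplanes sharing the same orthogonality pattern. Because the $v_i(t)$ were chosen so that deformed hyperplanes continue to meet orthogonally whenever the original ones did, the local maps glue correctly across the codimension-two strata of the Davis complex without increasing the Lipschitz constant. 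The main obstacle is exactly this local-to-global gluing: the $2k$-gon and $120$-cell cases in \cite{DGK20} enjoy extra symmetry that streamlines the estimate, whereas here it must be carried out uniformly over an arbitrary right-angled $3$-polyhedron. I expect it to nonetheless go through, since both the hyperbolic computation and the right-angled condition at each edge of $P$ are purely local.
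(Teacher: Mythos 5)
There is a genuine gap, and it sits exactly where you wrote that ``the three transverse coordinates provide ample room'': nothing in your argument explains why three extra dimensions suffice for an \emph{arbitrary} bounded convex right-angled polyhedron, whose number of facets can be arbitrarily large. In the bending scheme you describe, walls that meet orthogonally must be deformed so as to remain orthogonal, and the only way to fit this into a bounded number of new coordinates is to let whole families of pairwise non-adjacent facets share a deformation direction. That is a proper-colouring condition on the facet-adjacency graph of $P$, and it is precisely the hypothesis of Danciger--Gu\'eritaud--Kassel~\cite[Prop.~4.1]{DGK20}, which the paper's proof simply invokes: since $\partial P$ is a $2$-sphere, the facet-adjacency graph is planar, so the Appel--Haken four-colour theorem~\cite{AH77}, \cite{AHK77} supplies a proper $4$-colouring, and this is what lets the construction land in $\OO(6,1)$ (consistently with the $2$-colourable $2k$-gon landing in $\OO(3,1)$ and the $120$-cell in $\OO(8,1)$ in Fact~\ref{fact:DGK1}). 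Without this combinatorial input your deformation has no reason to close up inside $\R^{6,1}$.

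The second problem is that you defer the entire analytic content of the statement --- the existence of a $(\rho_s,\rho_t)$-equivariant $\frac{\cosh s}{\cosh t}$-Lipschitz self-map of $\Hyp^6$ --- to ``I expect it to nonetheless go through.'' That estimate \emph{is} the lemma; a proof cannot leave it as an expectation. The paper avoids re-deriving it because \cite[Prop.~4.1]{DGK20} already establishes the whole package (the family $\rho_t$ together with the contraction property) for a general right-angled reflection group equipped with a proper colouring of its facets. So while your overall strategy is the right one --- it is the DGK bending construction --- the correct and complete argument is to verify the colouring hypothesis via the four-colour theorem (or directly, for a concrete $P$ such as the right-angled dodecahedron) and cite that proposition, rather than to sketch a re-proof whose two essential steps, the colouring and the Lipschitz bound, are both left open.
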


\begin{proof}
This follows directly from the Appel--Haken four-colour theorem~\cite{AH77}, \cite{AHK77} 
and Danciger--Gu\'eritaud--Kassel~\cite[Prop.~4.1]{DGK20}. 
Of course, for a concrete choice of $P$ (such as the regular right-angled dodecahedron), 
one does not need to rely on the four-colour theorem. 
\end{proof}

\begin{lemma}\label{lem:DGK3}
Let $(n, N, P)$ be as in Fact~\ref{fact:DGK1} or Lemma~\ref{lem:DGK2}. 
Then, for any $0 < t < \ep$, we have 
\[
C_\textup{Lip}(\rho_0, \rho_t) \leqslant \frac{1}{\cosh t},
\]
and in particular, the representation $\rho_t$ is strictly dominated by $\rho_0$. 
\end{lemma}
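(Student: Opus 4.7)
The plan is to approach $s = 0$ from the right in the family $\{\rho_s\}_{-\varepsilon < s < \varepsilon}$ and pass to the limit using the continuity of the best Lipschitz constant.

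First I would record that for $0 < s \leqslant t < \varepsilon$, Fact~\ref{fact:DGK1} (or Lemma~\ref{lem:DGK2}) provides a $(\rho_s, \rho_t)$-equivariant $\frac{\cosh s}{\cosh t}$-Lipschitz self-map of $\Hyp^N$, so by the very definition of the best Lipschitz constant,
\[
C_\textup{Lip}(\rho_s, \rho_t) \leqslant \frac{\cosh s}{\cosh t}.
\]
Next I would check that $\rho_0$ is itself convex cocompact as a representation into $\OO(N,1)$: since $\Gamma$ is a cocompact lattice of $\OO(n,1)$, its image under the block-diagonal embedding $\OO(n,1) \hookrightarrow \OO(N,1)$ preserves the totally geodesic copy of $\Hyp^n$ in $\Hyp^N$ and acts cocompactly on it. Thus every $\rho_s$, including $s=0$, lies in $\Hom^\textup{cc}(\Gamma, \OO(N,1))$.

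Since the family $\{\rho_s\}$ is continuous, $\rho_s \to \rho_0$ in $\Hom(\Gamma, \OO(N,1))$ as $s \to 0^+$. Applying Fact~\ref{fact:GK2} with convex cocompact first argument $j = \rho_s$ and fixed second argument $\rho_t$ then yields
\[
C_\textup{Lip}(\rho_0, \rho_t) = \lim_{s \to 0^+} C_\textup{Lip}(\rho_s, \rho_t) \leqslant \lim_{s \to 0^+} \frac{\cosh s}{\cosh t} = \frac{1}{\cosh t}.
\]
For $0 < t < \varepsilon$ we have $\cosh t > 1$, hence $C_\textup{Lip}(\rho_0, \rho_t) < 1$ and $\rho_t$ is strictly dominated by $\rho_0$.

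There is no substantial obstacle: the argument is essentially a one-line limit once the right continuity result is invoked. The only point I would be careful about is that the Lipschitz estimate of Fact~\ref{fact:DGK1} is stated only for $0 < s \leqslant t$, so one genuinely needs the limit $s \to 0^+$ rather than plugging in $s=0$ directly; and that Fact~\ref{fact:GK2} is asymmetric in its two arguments (continuity is asserted on $\Hom^\textup{cc} \times \Hom$), which is precisely why the preliminary verification that $\rho_0$ is convex cocompact is needed.
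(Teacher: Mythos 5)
Your proposal is correct and follows essentially the same route as the paper: bound $C_\textup{Lip}(\rho_s,\rho_t)$ by $\frac{\cosh s}{\cosh t}$ via the equivariant Lipschitz maps of Fact~\ref{fact:DGK1}, then let $s \searrow 0$ using the continuity of $C_\textup{Lip}$ from Fact~\ref{fact:GK2}. Your extra check that $\rho_0$ is convex cocompact (so that Fact~\ref{fact:GK2} applies) is a sensible precaution, though Fact~\ref{fact:DGK1} already asserts that the whole family consists of convex cocompact representations.
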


\begin{proof}
By the continuity of $C_\textup{Lip}$ (Fact~\ref{fact:GK2}), we have 
\[
C_\textup{Lip}(\rho_0, \rho_t) 
= \lim_{s \searrow 0} C_\textup{Lip}(\rho_s, \rho_t) \leqslant \lim_{s \searrow 0} \frac{\cosh s}{\cosh t} 
= \frac{1}{\cosh t} < 1 \qquad (0 < t < \ep). \qedhere
\]
\end{proof}

\subsection{Weighted Dynkin diagrams and proper \texorpdfstring{$\SL(2,\R)$}{SL(2, R)}-actions}\label{subsect:sl2}

In this subsection, $G$ is assumed to be a linear \emph{semisimple} Lie group for simplicity. 
The standard textbook for the results in this subsection is Collingwood--McGovern~\cite{CM93}. 

We first observe that, since $G$ is linear, 
any Lie algebra homomorphism from $\ssl(2,\R)$ to $\g$ lifts uniquely to a Lie group homomorphism from $\SL(2,\R)$ to $G$. 
To put differently, the classification of connected subgroups in $G$ locally isomorphic to $\SL(2,\R)$ is equivalent to 
the classification of Lie subalgebras of $\g$ isomorphic to $\ssl(2,\R)$. 
It is customary to describe the latter in the following language:

\begin{definition}
A triple $(h,e,f)$ of elements in $\g$ is called an \emph{$\ssl_2$-triple} if 
\[
[h,e] = 2e, \qquad
[h,f] = -2f, \qquad
[e,f] = h.
\]
The elements $h,e,f \in \g$ are called 
the \emph{neutral}, \emph{nilpositive}, and \emph{nilnegative element} of the triple, respectively. 
\end{definition}

In other words, $(h,e,f)$ is called an $\ssl_2$-triple if the following is a Lie algebra homomorphism: 
\[
\varphi \colon \ssl(2,\R) \to \g, \qquad \begin{pmatrix} a & b \\ c & -a \end{pmatrix} \mapsto ah + be + cf. 
\]
Our convention is that the trivial triple $(h,e,f) = (0,0,0)$ is an $\ssl_2$-triple. 

We now recall two fundamental results on $\ssl_2$-triples: 

\begin{fact}[Jacobson--Morozov; see e.g.\ {\cite[Thms.~3.3.1 and 9.2.1]{CM93}}]\label{fact:Jacobson-Morozov}
Every nilpotent element in $\g$ is a nilpositive element of some $\ssl_2$-triple in $\g$. 
\end{fact}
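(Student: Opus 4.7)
The plan is to build the desired $\ssl_2$-triple $(h, e, f)$ in three stages. The case $e = 0$ is trivial (take $(0,0,0)$), so I will assume $e \neq 0$ throughout.

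The first and critical step is to produce some $h_0 \in \g$ with $[h_0, e] = 2e$, which amounts to showing $e \in [\g, e]$. Since $\g$ is semisimple, the Killing form $B$ is nondegenerate and $\ad$-invariant, so by invariance $[\g, e]^\perp = \{ Y \in \g : [Y, e] = 0 \}$. It therefore suffices to check that $B(e, Y) = 0$ whenever $[Y, e] = 0$. For such a $Y$, the operators $\ad(e)$ and $\ad(Y)$ commute; since $\ad(e)$ is nilpotent, the commuting product $\ad(e)\,\ad(Y)$ is nilpotent as well, so $B(e, Y) = \mathrm{tr}(\ad(e)\,\ad(Y)) = 0$.

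With $h_0$ in hand, I would next promote it to a semisimple element. Writing $h_0 = h_0^s + h_0^n$ for the abstract Jordan decomposition in $\g$, the adjoint representation carries this to the Jordan decomposition of $\ad(h_0)$ as an endomorphism of $\g$. The relation $(\ad h_0)(e) = 2e$ then forces $e$ into the ordinary $2$-eigenspace of the semisimple part $\ad(h_0^s)$, so $h := h_0^s$ is a semisimple element of $\g$ still satisfying $[h, e] = 2e$. Finally, I would construct $f$ from the $\ad(h)$-eigenspace decomposition $\g = \bigoplus_{k \in \Z} \g(k)$, in which $e \in \g(2)$ and $h \in \g(0)$. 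It suffices to exhibit $f \in \g(-2)$ with $[e, f] = h$, since the relation $[h, f] = -2f$ is automatic from $f \in \g(-2)$; the needed assertion $h \in [e, \g(-2)]$ is then a graded analogue of the orthogonality argument of the first step, using that $B$ pairs $\g(-2)$ with $\g(2)$ nondegenerately.

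The principal obstacle is the first step: the identity $e \in [\g, e]$ for nilpotent $e$ is the essential content of the theorem, and it is precisely where semisimplicity of $\g$ enters, via nondegeneracy of the invariant form together with the vanishing of the trace of a commuting product of operators one of which is nilpotent. The remaining two steps are formal consequences of Jordan decomposition and a refined iteration of the same trace/orthogonality trick.
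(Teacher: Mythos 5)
The paper offers no proof of this statement --- it is quoted as a known theorem from Collingwood--McGovern --- so I can only judge your argument on its own terms. Your first step is correct and is indeed the standard opening move: invariance of the Killing form gives $[\g,e]^\perp=\mathfrak{z}_\g(e)$, and the vanishing of $\operatorname{tr}(\ad(e)\ad(Y))$ for $Y$ commuting with the nilpotent element $e$ gives $e\perp\mathfrak{z}_\g(e)$, hence $e\in[\g,e]$ and an element $h_0$ with $[h_0,e]=2e$ exists.

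The gap is in your final step, and it is exactly where the real content of the theorem lies. What is needed to complete $(h,e)$ to an $\ssl_2$-triple is not that $h$ be semisimple but that $h\in[e,\g]$ (this is the hypothesis of Morozov's lemma), and passing from $h_0$ to its semisimple part does not achieve this. Concretely, the same duality computation as in your first step shows that the orthogonal complement of $[e,\g(-2)]$ inside $\g(0)$ is $\{Y\in\g(0):[Y,e]=0\}$, so to solve $[e,f]=h$ with $f\in\g(-2)$ you must show $B(h,Y)=0$ for every such $Y$. Your trace trick does not transfer: in step one you used that $\ad(e)$ is \emph{nilpotent} to kill the trace, whereas here the relevant operator $\ad(h)$ is semisimple, and $\operatorname{tr}(\ad(h)\ad(Y))$ has no reason to vanish. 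The construction can genuinely fail: in $\g=\ssl(2,\R)\oplus\ssl(2,\R)$ with $e=(e_1,0)$, the element $h_0=(h_1,y)$, where $(h_1,e_1,f_1)$ is the standard triple and $y\neq 0$ is semisimple, satisfies $[h_0,e]=2e$ and is already semisimple, yet $[e,\g]$ lies entirely in the first factor, so no $f$ with $[e,f]=h_0$ exists. The essential missing step is the correction of $h_0$ by an element of $\mathfrak{z}_\g(e)$ so that it lands in $[e,\g]$; this requires a genuine further argument (e.g.\ showing that $-2$ is not an eigenvalue of $\ad(h_0)$ on $\mathfrak{z}_\g(e)$, or the induction on $\dim\g$ used in Collingwood--McGovern), after which Morozov's lemma produces $f$. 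A secondary issue: the eigenvalues of your $\ad(h)$ need not be integers, so the grading $\g=\bigoplus_{k\in\Z}\g(k)$ is also unjustified as stated.
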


\begin{fact}[Kostant; see e.g.\ {\cite[Thms.~3.4.10 and 9.2.3]{CM93}}]
Let $(h,e,f)$ and $(h',e,f')$ be two $\ssl_2$-triples in $\g$ with the same nilpositive element $e$. 
Then, there exists $g \in \Stab_G(e)$ such that $h' = \Ad(g) h$ and $f' = \Ad(g) f$, 
where $\Stab_G(e)$ is the stabilizer of $e$ in $G$. 
\end{fact}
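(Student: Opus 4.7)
The plan is to exploit the $\ssl_2$-representation structure on $\g$ induced by $\varphi := (h, e, f)$, and to find the conjugating element inside the connected unipotent subgroup
\[
U := \exp\Bigl(\bigoplus_{k \geq 1} \g(k) \cap \mathfrak{z}_\g(e)\Bigr) \subset \Stab_G(e),
\]
where $\g(k)$ is the $k$-eigenspace of $\ad(h)$ on $\g$ and $\mathfrak{z}_\g(e) := \{Z \in \g \mid [Z, e] = 0\}$. Write $\mathfrak{n} := \bigoplus_{k \geq 1} \g(k) \cap \mathfrak{z}_\g(e)$ and $\mathfrak{z}_\g(\varphi) := \{Z \in \g \mid [Z, h] = [Z, e] = [Z, f] = 0\}$. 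The ingredients from $\ssl_2$-representation theory that I would record at the start are: (i) $\ad(e) \colon \g(k) \to \g(k+2)$ is surjective for every $k$, so $\ker(\ad e) \subset \bigoplus_{k \geq 0} \g(k)$; (ii) $\g(0) \cap \ker(\ad e) = \mathfrak{z}_\g(\varphi)$; (iii) inside $\g(0)$, the subspace $\mathfrak{z}_\g(\varphi)$ and the image of $\ad(e) \colon \g(-2) \to \g(0)$ intersect only in $0$ (decompose $\g$ into $\ssl_2$-irreducibles: the first subspace is the $0$-weight part of trivial summands, the second the $0$-weight part of summands of highest weight $\geq 2$).

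The first substantive step is to show $h' - h \in \mathfrak{n}$. Decomposing $h' = \sum_k h'_k$ with $h'_k \in \g(k)$, the relation $[h', e] = 2e$ yields $[h'_k, e] = 0$ for $k \neq 0$ and $[h'_0 - h, e] = 0$. By (i), $h'_k = 0$ for $k < 0$; by (ii), $X_0 := h'_0 - h \in \mathfrak{z}_\g(\varphi)$. To kill $X_0$, apply $[e, f'] = h'$: decomposing $f' = \sum_k f'_k$ and matching $\g(0)$-components gives $[e, f'_{-2}] = h + X_0$, so $X_0 = [e, f'_{-2} - f]$ lies in the image of $\ad(e) \colon \g(-2) \to \g(0)$. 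By (iii), $X_0 = 0$, so indeed $h' - h \in \mathfrak{n}$.

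Next I would construct $g \in U$ with $\Ad(g) h = h'$. Since $\ad(h)$ acts on $\mathfrak{n}_k := \g(k) \cap \mathfrak{z}_\g(e)$ as multiplication by $k$, its restriction to $\mathfrak{n}$ is a linear automorphism. Writing $h' - h = \sum_{k \geq 1} Y_k$ with $Y_k \in \mathfrak{n}_k$ and seeking $X = \sum_{k \geq 1} X_k \in \mathfrak{n}$ with $\Ad(\exp X) h = h + \sum_k Y_k$, the weight-$k$ component of $\Ad(\exp X) h - h$ has the triangular form $-k X_k + P_k(X_1, \dots, X_{k-1})$ for a polynomial $P_k$ in iterated brackets (each $\ad(X)^m h$ involves $X_{j_1}, \dots, X_{j_m}$ with $\sum j_i = k$ and $j_i \geq 1$, so $m \leq k$ and each $j_i \leq k-1$ when $m \geq 2$). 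One solves this system recursively by $X_k = -\tfrac{1}{k}\bigl(Y_k - P_k(X_1, \dots, X_{k-1})\bigr)$; since $\mathfrak{n}$ carries only finitely many nonzero weights the recursion terminates, and $g := \exp X \in U$ satisfies $\Ad(g) h = h'$ and $\Ad(g) e = e$.

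Finally, to conclude $\Ad(g) f = f'$, observe that $(h', e, \Ad(g) f)$ and $(h', e, f')$ are both $\ssl_2$-triples with matching neutral and nilpositive elements, so $D := f' - \Ad(g) f$ satisfies $[e, D] = 0$ and $[h', D] = -2D$; applying ingredient (i) to the triple $(h', e, f')$ forces $D = 0$. The delicate point of the whole argument is the vanishing of $X_0$: the weight-filtration arithmetic is routine, but $X_0 = 0$ genuinely relies on recognizing $\mathfrak{z}_\g(\varphi)$ and the image of $\ad(e)|_{\g(-2)}$ as complementary subspaces of $\g(0)$.
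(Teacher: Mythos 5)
The paper does not prove this statement: it is quoted as a Fact with a citation to Collingwood--McGovern \cite{CM93}, so there is no in-paper argument to compare against. Your proposal is a correct and essentially complete rendition of the standard proof from that reference (Thm.~3.4.10 there): the reduction of $h'-h$ to the positively graded part $\mathfrak{n}$ of $\mathfrak{z}_\g(e)$ via the complementarity of $\mathfrak{z}_\g(\varphi)$ and $\ad(e)(\g(-2))$ in $\g(0)$, the triangular recursion producing $g=\exp X\in\exp(\mathfrak{n})\subset\Stab_G(e)$, and the final uniqueness of the nilnegative element are all sound. The only step you leave implicit is that the correction terms $P_k(X_1,\dots,X_{k-1})$ again lie in $\mathfrak{z}_\g(e)$ (so that the recursion stays inside $\mathfrak{n}$); this follows from $[e,[X,h]]=-2[X,e]=0$ for $X\in\mathfrak{z}_\g(e)$ and is worth a line, but it is not a gap.
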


These two facts imply that the assignment $(h,e,f) \mapsto e$ induces a bijection 
from the set of conjugacy classes of $\ssl_2$-triples in $\g$ to the set of nilpotent orbits in $\g$. 

Let $(h,e,f)$ be an $\ssl_2$-triple in $\g$. 
The hyperbolic orbit $\Ad(G)h$ depends only on the nilpotent orbit $\Ad(G)e$, 
hence we call it the \emph{distinguished hyperbolic orbit} associated with $\Ad(G)e$. 
It follows that each distinguished hyperbolic orbit in $\g$ intersects $\overline{\ab^+}$ at exactly one point, 
which we call the \emph{standard neutral element}. 
It is often presented by the following data: 

\begin{definition}
We define the \emph{weighted Dynkin diagram} of $h \in \overline{\ab^+}$ as follows: 
\begin{itemize}
\item As a mere diagram, it is the Dynkin diagram for the restricted root system $\Sigma$ of $\g$. 
\item The node corresponding to a simple restricted root $\alpha \in \Delta$ is labelled by $\alpha(h) \in \R_{\geqslant 0}$. 
\end{itemize}
\end{definition}

\begin{example}
Let $G = \SL(m,\R) \ (m \geqslant 2)$ and 
\[
h = \begin{pmatrix} h_1 \\ & \ddots \\ & & h_m \end{pmatrix}  \qquad\qquad 
\left( h_1 \geqslant \dots \geqslant h_m,\ \sum_{i=1}^m h_m = 0 \right).
\]
Then, the weighted Dynkin diagram of $h$ is as follows: 
\[
\begin{tikzcd}[row sep = small, column sep = large]
\bullet \ar[d, phantom, "h_1-h_2"] \ar[r, no head] & \bullet \ar[d, phantom, "h_2-h_3"] \ar[r, no head] & \cdots \ar[r, no head] & \bullet \ar[d, phantom, "{h_{m-1} - h_m.}"] \\
\ & \ & \ & \ 
\end{tikzcd}
\]
\end{example}

Let us now observe that, if $(h,e,f)$ is an $\ssl_2$-triple in $\g$ with $h \in \overline{\ab^+}$, 
then the Cartan projection of the corresponding reductive subgroup $L$ in $G$ is given by 
\[
\mu(L) = \R_{\geqslant 0} \cdot h.
\]
Thus, in the following three steps, one can verify if a given homogeneous space $G/H$ of reductive type is (P-sl$_2$R) 
(i.e.\ if $G$ has a closed subgroup which is locally isomorphic to $\SL(2,\R)$ and acts properly on $G/H$): 

\begin{enumerate}[label = {\upshape (\roman*)}]
\item Classify all nilpotent orbits in $\g$. 
\item For each nilpotent orbit in $\g$, compute the weighted Dynkin diagram of the corresponding standard neutral element. 
\item Check if every weighted Dynkin diagram obtained in (ii) represents an element in $W \ab_H$.
The homogeneous space $G/H$ is (P-sl$_2$R) precisely when the answer is `No'. 
\end{enumerate}

For a general semisimple Lie group $G$, these are highly complicated tasks. 
The solution to the task (i) is surveyed in \cite[Chap.~9]{CM93}. 
Okuda~\cite{Oku13} accomplished (ii) by using Satake diagrams systematically, 
and also (iii) when $H$ is a symmetric subgroup in $G$. 

In this paper, we only need to consider the case where $G = \OO(2m+1, \C)$ and $H$ is as in Theorem~\ref{thm:main-example}. 
In this case, the tasks (i) and (ii) are much easier than the general case.
Firstly, if $G$ is a complex classical Lie group, 
one can classify nilpotent orbits by relatively elementary linear algebra and combinatorics. 
In our case, the following holds: 

\begin{fact}[Gerstenhaber; see e.g.\ {\cite[Thm.~5.1.2]{CM93}}]\label{fact:typeB-1}
If $G = \OO(2m+1, \C) \ (m \geqslant 1)$, 
the set of nilpotent orbits in $\g = \oo(2m+1,\C)$ is canonically bijective to the set of partitions of $2m+1$ 
in which even numbers occur with even multiplicity. 
\end{fact}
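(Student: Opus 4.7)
The plan is to reduce the problem to linear algebra on the defining representation $V = \C^{2m+1}$ of $\OO(2m+1, \C)$. By definition, $\oo(2m+1, \C)$ consists of the endomorphisms $X$ of $V$ satisfying $B(Xv, w) + B(v, Xw) = 0$ for a fixed non-degenerate symmetric bilinear form $B$ on $V$; a nilpotent $X \in \oo(2m+1, \C)$ is then a nilpotent endomorphism of $V$, whose Jordan normal form yields a partition $\lambda$ of $2m+1$ recording the block sizes. Two claims must be proved: first, that $\lambda$ is a \emph{complete} invariant of the $\OO(2m+1, \C)$-orbit of $X$; second, that $\lambda$ arises in this way if and only if every even part occurs with even multiplicity.

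For the second claim, I would decompose the triple $(V, B, X)$ as an $X$-invariant and $B$-orthogonal direct sum of indecomposable summands. A standard induction, using the filtration by $\ker X^i$ together with its $B$-orthogonal complements, shows every indecomposable summand is of one of two types: either a single $X$-cyclic subspace $W$ on which $B$ restricts non-degenerately, or a pair $W \oplus W'$ of $X$-cyclic and totally $B$-isotropic subspaces of equal dimension placed in non-degenerate duality by $B$. The second type contributes two equal-size Jordan blocks and hence an even multiplicity for that part. For the first type, with cyclic vector $v$ and $\dim W = k$, the skew-adjointness of $X$ and the symmetry of $B$ give
\[
B(v, X^{k-1} v) = B(X^{k-1} v, v) = (-1)^{k-1} B(v, X^{k-1} v),
\]
and non-degeneracy of $B|_W$ forces $B(v, X^{k-1} v) \neq 0$, so $(-1)^{k-1} = 1$ and $k$ is odd. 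Consequently every even part of $\lambda$ arises only from paired summands, yielding the required parity condition.

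For the first claim, given two nilpotents $X, X' \in \oo(V, B)$ with the same Jordan type, I would match the indecomposable summands pairwise: on each summand the non-degenerate symmetric form compatible with the prescribed $X$-cyclic structure is unique up to isometry (by a rescaling and change-of-cyclic-vector argument), and Witt's extension theorem assembles the resulting partial isometries between corresponding summands into a single element of $\OO(V, B)$ conjugating $X$ to $X'$. Conversely, any partition $\lambda$ of $2m+1$ with even parts of even multiplicity is realized by an explicit model: each odd part $k$ contributes a single $k$-dimensional Jordan block equipped with an $X$-skew-adjoint non-degenerate symmetric form (possible exactly because $k$ is odd, by the sign computation above), and each matched pair of even parts $k, k$ contributes a hyperbolic pair of Jordan blocks. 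The main obstacle is the sign bookkeeping forcing $k$ to be odd in the cyclic case; once that is in hand, the indecomposable decomposition and the Witt-theoretic matching are routine.
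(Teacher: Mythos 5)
The paper does not prove this statement: it is quoted as a known Fact with a pointer to Collingwood--McGovern \cite[Thm.~5.1.2]{CM93}, so there is no in-paper argument to compare against. Your sketch is correct and is essentially the classical Gerstenhaber/Springer--Steinberg proof, which takes a genuinely different route from the cited reference: \cite{CM93} first applies the Jacobson--Morozov theorem to embed a nilpotent $X$ into an $\ssl_2$-triple, decomposes $V=\C^{2m+1}$ into irreducible $\ssl_2$-submodules, and derives the parity condition from the fact that the $k$-dimensional irreducible carries an invariant form that is symmetric for $k$ odd and alternating for $k$ even; you instead work directly with the orthogonal decomposition of $(V,B,X)$ into indecomposable form modules, getting the parity condition from the sign computation $B(v,X^{k-1}v)=(-1)^{k-1}B(v,X^{k-1}v)$ on a cyclic nondegenerate summand. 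Your approach is more elementary (no $\ssl_2$-theory) and also yields conjugacy within the isometry group directly, which is exactly what the Fact asserts since the paper's nilpotent orbits are $\Ad(\OO(2m+1,\C))$-orbits. The two points you compress most --- that every indecomposable summand is either a nondegenerate cyclic piece or a hyperbolic pair of cyclic pieces of equal dimension, and that on a cyclic piece the compatible form is unique up to an isometry commuting with the cyclic structure (over $\C$ one normalizes $B(v,X^jv)$ to $\delta_{j,k-1}$ by replacing $v$ with $p(X)v$) --- are standard but are where all the real work lies; as stated they are correct, and note that once the summands are matched by such isometries their orthogonal direct sum already gives the required element of $\OO(2m+1,\C)$, so Witt's extension theorem is not actually needed.
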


Furthermore, the corresponding weighted Dynkin diagrams are also easy to describe: 

\begin{fact}[Springer--Steinberg; see e.g.\ {\cite[Lem.~5.3.3]{CM93}}]\label{fact:typeB-2}
Let $\mathbf{d} = \br{(2m+1)^{d_{2m+1}}, \dots, 2^{d_2}, 1^{d_1}}$ be a partition of $2m+1$ 
in which even numbers occur with even multiplicity, namely, 
\[
d_1, d_3, \dots, d_{2m+1} \in \N, \qquad d_2, d_4, \dots, d_{2m} \in 2\N, \qquad
\sum_{\ell=1}^{2m+1} d_\ell \ell = 2m+1. 
\tag{$\dagger_m$}\label{tag:partition}
\]
Form a sequence of integers 
\[
(h_1, \dots, h_m, 0, -h_m, \dots, -h_1) \qquad (h_1 \geqslant \dots \geqslant h_m \geqslant 0)
\]
by first concatenating the $2m+1$ sequences 
\[
(d_{i}-1,\, d_{i}-3,\,...,\, -d_i+3,\, -d_{i}+1) \qquad (1 \leqslant i \leqslant 2m+1)
\]
and then rearranging it in a decreasing order.
Then, the standard neutral element in $\g = \oo(2m+1,\C)$ obtained from $\mathbf{d}$ via Fact~\ref{fact:typeB-1} 
is presented by the following weighted Dynkin diagram:
\[
\begin{tikzcd}[row sep = small, column sep = large]
\bullet \ar[d, phantom, "h_1-h_2"] \ar[r, no head] & \bullet \ar[d, phantom, "h_2-h_3"] \ar[r, no head] & \cdots \ar[r, no head] & \bullet \ar[d, phantom, "h_{m-1}-h_m"] \ar[r, shift left = 0.4, no head] \ar[r, phantom, "{\rangle}"] \ar[r, shift right = 0.4, no head] & \bullet \ar[d, phantom, "{h_m.}"] \\
\ & \ & \ & \ & \
\end{tikzcd}
\]
\end{fact}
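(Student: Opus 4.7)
The plan is to exhibit an explicit $\ssl_2$-triple $(h,e,f) \subset \oo(2m+1,\C)$ whose nilpositive element $e$ lies in the nilpotent orbit assigned to $\dd$ by Fact~\ref{fact:typeB-1}, and then to conjugate $h$ into $\overline{\ab^+}$. Once this is done, reading off the weighted Dynkin diagram reduces to the standard computation with type $B_m$ simple roots. The whole argument is essentially linear-algebraic, and the parity condition $(\dagger_m)$ enters at exactly one point.

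First, I would construct the $\ssl_2$-triple. Fix the standard nondegenerate symmetric bilinear form $B$ on $V = \C^{2m+1}$ preserved by $\oo(2m+1,\C)$. The bijection of Fact~\ref{fact:typeB-1} comes from viewing $V$ as an $\ssl(2,\C)$-module via the given triple: two nilpotent elements are conjugate in $\oo(2m+1,\C)$ if and only if the resulting modules are isomorphic, and $\dd$ records the dimensions of irreducible summands. To realise a given $\dd$ \emph{inside} $\oo(2m+1,\C)$, I would decompose $V = \bigoplus_i W_i$ orthogonally with respect to $B$ as follows: each occurrence of an odd part $d$ in $\dd$ contributes one summand isomorphic to the irreducible $\ssl(2,\C)$-module $V_{d-1}$ of dimension $d$ (which carries an invariant symmetric form), while the even parts, occurring with even multiplicity by $(\dagger_m)$, are grouped pairwise into hyperbolic summands $V_{d-1} \oplus V_{d-1}$ on which $B$ restricts to a standard split symmetric form. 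The resulting Lie algebra homomorphism $\ssl(2,\C) \hookrightarrow \oo(2m+1,\C)$ is the desired $\ssl_2$-triple.

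Second, I would compute the spectrum of $h$ and place it in $\overline{\ab^+}$. On each irreducible $V_{d-1}$, the element $h$ acts with weights $d-1, d-3, \dots, -(d-1)$, so concatenating over all parts of $\dd$ gives exactly the multiset appearing in the statement. Since $h \in \oo(2m+1,\C)$, this multiset is automatically invariant under $x \mapsto -x$, and after sorting decreasingly it takes the form
\[
h_1 \geqslant h_2 \geqslant \dots \geqslant h_m \geqslant 0 \geqslant -h_m \geqslant \dots \geqslant -h_1.
\]
In the antidiagonal realisation of $B$, the Cartan subalgebra $\ab \subset \oo(2m+1,\C)$ consists of matrices $\diag(a_1, \dots, a_m, 0, -a_m, \dots, -a_1)$ and $\overline{\ab^+}$ is cut out by $a_1 \geqslant \dots \geqslant a_m \geqslant 0$, so a suitable permutation in $W$ places $h$ in $\overline{\ab^+}$ with coordinates $(h_1, \dots, h_m)$. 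The $B_m$ simple roots $\alpha_i = e_i - e_{i+1}$ $(1 \leqslant i \leqslant m-1)$ and $\alpha_m = e_m$ then evaluate on $h$ to $h_i - h_{i+1}$ and $h_m$ respectively, matching the diagram in the statement.

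I expect the main obstacle to be the orthogonal decomposition in the first step: one must show that the abstract $\ssl(2,\C)$-module structure on $V$ prescribed by $\dd$ can actually be realised by an $\ssl_2$-triple inside $\oo(2m+1,\C)$, not merely inside $\mathfrak{gl}(2m+1,\C)$. This is precisely where the parity condition $(\dagger_m)$ becomes essential, since $V_{d-1}$ admits an $\ssl_2$-invariant symmetric form only when $d$ is odd; for even $d$ the invariant form on $V_{d-1}$ is alternating, and such summands must be paired off as $V_{d-1} \oplus V_{d-1}$ in order to carry a $B$-compatible symmetric form, which is possible precisely because $(\dagger_m)$ forces even parts to occur with even multiplicity. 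Once this compatibility with $B$ is secured, the rest is bookkeeping about $\ssl_2$-weights and the $B_m$ Weyl chamber.
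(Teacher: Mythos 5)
The paper offers no proof of this statement---it is imported as a classical result of Springer--Steinberg with a pointer to \cite[Lem.~5.3.3]{CM93}---so there is nothing in the paper itself to compare your argument against. Your proof is the standard one and is correct: you realise the partition by an explicit $\ssl_2$-triple inside $\oo(2m+1,\C)$, pairing the even parts into hyperbolic summands so that the invariant form is symmetric (the one place where the parity condition in (\ref{tag:partition}) is genuinely needed), then use Kostant's uniqueness theorem (recorded in \S\ref{subsect:sl2}) to identify the neutral element of your triple, conjugated into $\overline{\ab^+}$, with the standard neutral element, and finally evaluate the type $B_m$ simple roots. One caveat on reading the statement itself: the phrase ``the $2m+1$ sequences $(d_i-1,\dots,-d_i+1)$'' is a slip inherited from the notation of \cite{CM93}, where $d_i$ denotes the $i$-th \emph{part}; in the notation of the present paper ($d_\ell$ being the multiplicity of the part $\ell$) the intended multiset consists, for each part $\ell$ counted with multiplicity $d_\ell$, of the weight string $(\ell-1,\ell-3,\dots,-\ell+1)$, which is exactly what your computation uses and what the formulas for $a_0,\dots,a_{2m}$ in Section~\ref{sect:proof-sl2} presuppose.
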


\section{Proofs of Theorems~\ref{thm:main-general} and \ref{thm:main-example}~(1)}\label{sect:proof-cocHn}

The main goal of this section is to prove Theorem~\ref{thm:main-general}. 
It is a direct consequence of a more general result (Proposition~\ref{prop:main-estimate-2}) and 
Danciger--Gu\'eritaud--Kassel's work recalled in Subsection~\ref{subsect:cc}. 
We also prove Theorem~\ref{thm:main-example}~(1) as an application of Theorem~\ref{thm:main-general} in this section. 

Throughout this section, let $N \geqslant n \geqslant 2$,
let $j \colon \Gamma \to \OO(n,1)$ be a convex cocompact representation, 
and denote by $J \colon \Gamma \to \OO(N,1)$ the composite
\[
\Gamma \xrightarrow{j} \OO(n, 1) \hookrightarrow \OO(N, 1),
\]
where the latter embedding is the standard block-diagonal one. 
We first prove the following result: 

\begin{proposition}\label{prop:main-estimate}
Let $\ell$ be a proper (i.e.\ $0$- or $1$-dimensional) linear subspace of $\ab_{\OO(n,1)} \oplus \ab_{\OO(N,1)}$. 
Then, for any representation $\rho \colon \Gamma \to \OO(N,1)$ sufficiently close to and strictly dominated by $J$, 
there exist $\ep, M > 0$ such that, for any $\gamma \in \Gamma$, 
\[
d(\mu_{j,\rho}(\gamma), \ell) \geqslant \ep \lvert \gamma \rvert - M,
\]
where $\mu_{j, \rho} = \mu \circ (j, \rho)$. 
\end{proposition}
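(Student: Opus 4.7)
The plan is to identify $\ab_{\OO(n,1)} \oplus \ab_{\OO(N,1)}$ with $\R^2$ so that $\mu_{j,\rho}(\gamma) = (x, y)$ with $x = \|\mu_j(\gamma)\|$ and $y = \|\mu_\rho(\gamma)\|$, and then bound the distance to $\ell$ by a case analysis on its slope. Because $J$ is the block-diagonal composite of $j$, one has $\|\mu_J(\gamma)\| = x$ as well. The argument rests on four linear estimates: (a) $x \geqslant \ep_0 |\gamma| - M_0$ from convex cocompactness of $j$ (Fact~\ref{fact:cc-alternative-definition}); (b) $y \leqslant c x + M_1$ for some $c < 1$, by applying Lemma~\ref{lem:GK4} to the strictly dominated pair $(J, \rho)$; (c) by Fact~\ref{fact:cc-open}, for $\rho$ sufficiently close to $J$ the representation $\rho$ is itself convex cocompact, so Lemma~\ref{lem:GK4} applied to the reversed pair $(\rho, J)$ yields $x \leqslant c' y + M_2$ for some finite $c' > C_\textup{Lip}(\rho, J)$; and (d) by Lemma~\ref{lem:GK5} combined with the continuity of $C_\textup{Lip}$ (Fact~\ref{fact:GK2}), we have $C_\textup{Lip}(\rho, J) \to 1$ as $\rho \to J$, so $c'$ may be taken arbitrarily close to $1$.

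I would then carry out a case analysis using $d((x, y), \ell) = |y - tx|/\sqrt{1 + t^2}$ when $\ell$ has finite slope $t$, and $d((x,y),\ell) = x$ when $\ell$ is the $y$-axis. The subcases $\ell = \{0\}$, $\ell$ equal to the $y$-axis, and $\ell$ of negative slope are all immediate from (a). When $\ell$ has positive slope $t > c$, the inequality $tx - y \geqslant (t - c) x - M_1$ together with (a) delivers the desired linear lower bound in $|\gamma|$; when $\ell$ has slope $0 \leqslant t < 1/c'$, the reversed inequality $y - tx \geqslant (1/c' - t) x - M_2/c'$ does the same job.

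The main obstacle is the ``problematic interval'' $[1/c', c]$ of slopes for which neither (b) nor (c) directly yields growth: combining $y \leqslant cx + M_1$ with $x \leqslant c' y + M_2$ for arbitrarily large $x$ forces $cc' \geqslant 1$, so this interval is always non-empty and contains $1$. The resolution is (d): as $\rho$ approaches $J$, both endpoints converge to $1$, so for any \emph{fixed} $\ell$ of slope $t_\ell \neq 1$ one can shrink the neighbourhood of $J$ enough to exclude $t_\ell$ from $[1/c', c]$, placing $\ell$ in one of the two favourable cases. For the diagonal itself ($t_\ell = 1$), the strict inequality $c < 1$ coming from strict domination already gives $t_\ell > c$, so no further closeness is needed. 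This exhausts all proper subspaces $\ell$ and yields the desired estimate.
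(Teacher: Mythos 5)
Your argument is correct and is essentially the paper's own proof: the same three estimates (Fact~\ref{fact:cc-alternative-definition} for the linear lower bound on $\| \mu_j(\gamma) \|$, Lemma~\ref{lem:GK4} applied to $(J,\rho)$ with $c<1$ via strict domination, and Lemma~\ref{lem:GK4} applied to $(\rho,J)$ with $c'$ arbitrarily close to $1$ via Facts~\ref{fact:cc-open}, \ref{fact:GK2} and Lemma~\ref{lem:GK5}) drive the same slope-by-slope case analysis, with the closeness of $\rho$ to $J$ needed only for slopes in $[0,1)$ and strict domination needed only at slope $1$. One cosmetic slip: since $c<1$, the ``problematic interval'' $[1/c',c]$ can never contain $1$ --- that exclusion is exactly what strict domination buys, as your own treatment of the slope-$1$ case then correctly exploits --- so the phrase ``and contains $1$'' should be deleted, but this does not affect the validity of the proof.
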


\begin{proof}
One may assume that $N = n$ (and hence, $J = j$) and that $\ell$ is $1$-dimensional. 
By Fact~\ref{fact:cc-alternative-definition}, it is enough to prove the existence of $\ep', M' > 0$ such that, 
for any $\gamma \in \Gamma$, we have 
\[
d(\mu_{j,\rho}(\gamma), \ell) \geqslant \ep' \| \mu_j(\gamma) \| - M'. 
\]

Let us identify $\ab_{\OO(n,1)} \oplus \ab_{\OO(n,1)}$ (resp.\ $\overline{(\ab_{\OO(n,1)} \oplus \ab_{\OO(n,1)})^+}$) 
with $\R^2$ (resp.\ $\R_{\geqslant 0}^2$) in an obvious way, 
and denote by $s \in (-\infty, \infty]$ the slope of $\ell$. 

Firstly, if $-\infty < s < 0$, 
the line $\ell$ meets the first quadrant $\R_{\geqslant 0}^2$ 
only at the origin, as depicted in Figure~\ref{fig:-infty<s<0} below. We have
\[
d(\mu_{j,\rho}(\gamma), \ell) \geqslant d\big( (\mu_j(\gamma), 0), \ell \big),
\]
and the above inequality (with $M' = 0$) follows from this.

Next, suppose that $1 \leqslant s \leqslant \infty$. 
Since $\rho$ is strictly dominated by $j$, we can take $c \in \R$ so that $C_\textup{Lip}(j, \rho) < c < s$. 
By Lemma~\ref{lem:GK5}, there exists $M'' > 0$ such that, for any $\gamma \in \Gamma$, we have
\[
\| \mu_\rho(\gamma) \| \leqslant c \| \mu_j(\gamma) \| + M''. 
\]
The desired inequality follows easily from this, as depicted in Figure~\ref{fig:1<=s<=infty} below.

Let us finally consider the case where $0 \leqslant s < 1$.
By Facts~\ref{fact:cc-open}, \ref{fact:GK2} and Lemma~\ref{lem:GK5}, 
any representation $\rho \colon \Gamma \to \OO(n,1)$ sufficiently close to $j$ must be 
convex cocompact and satisfy $C_\textup{Lip}(\rho, j) < 1/s$. 
For such $\rho$, take $c' \in \R$ so that $C_\textup{Lip}(\rho, j) < c' < 1/s$. 
As before, one can derive the desired inequality from Lemma~\ref{lem:GK5}, 
as depicted in Figure~\ref{fig:0<=s<1} below.
\end{proof}

\begin{figure}[!hbt]
\minipage{0.33\textwidth}
\begin{tikzpicture}
\filldraw [fill=gray, opacity=0.3] (0,0) -- (0, 2.8) -- (2.8, 2.8) -- (2.8, 0) -- cycle;
\draw[->] (-0.3, 0) -- (2.9, 0);
\draw[->] (0, -0.3)-- (0, 2.9);
\draw[thick] (-0.6, 0.2) -- (0.9, -0.3);
\coordinate (L) at (-0.6, 0.3) node [left] at (L) {$\ell$};
\coordinate (X) at (2.9, 0) node [right] at (X) {$\| \mu_j(\gamma) \|$};
\coordinate (Y) at (0, 2.9) node [above] at (Y) {$\| \mu_\rho(\gamma) \|$};
\coordinate (1) at (0.3, 0.2); \fill (1) circle (1.5pt);
\coordinate (2) at (1.3, 2.2); \fill (2) circle (1.5pt);
\coordinate (3) at (2.2, 0.4); \fill (3) circle (1.5pt);
\coordinate (4) at (1.8, 1.1); \fill (4) circle (1.5pt);
\coordinate (5) at (0.7, 0.9); \fill (5) circle (1.5pt);
\coordinate (6) at (1.4, 1.9); \fill (6) circle (1.5pt);
\coordinate (7) at (1.4, 0.7); \fill (7) circle (1.5pt);
\coordinate (8) at (2.2, 2.1); \fill (8) circle (1.5pt);
\coordinate (9) at (0.4, 1.8); \fill (9) circle (1.5pt);
\coordinate (10) at (0.1, 2.4); \fill (10) circle (1.5pt);
\coordinate (11) at (2.5, 2.7); \fill (11) circle (1.5pt);
\coordinate (12) at (1.2, 1.5); \fill (12) circle (1.5pt);
\coordinate (13) at (0.8, 2.5); \fill (13) circle (1.5pt);
\coordinate (14) at (1.7, 0.3); \fill (14) circle (1.5pt);
\coordinate (15) at (2.6, 1.5); \fill (15) circle (1.5pt);
\coordinate (16) at (0.2, 1.2); \fill (16) circle (1.5pt);
\coordinate (17) at (0.9, 0.4); \fill (17) circle (1.5pt);
\coordinate (18) at (2.7, 0.7); \fill (18) circle (1.5pt);
\coordinate (19) at (1.9, 2.5); \fill (19) circle (1.5pt);\end{tikzpicture}
\caption{The $-\infty < s < 0$ case}\label{fig:-infty<s<0}
\endminipage\hfill
\minipage{0.32\textwidth}
\begin{tikzpicture}
\filldraw [fill=gray, opacity=0.3] (0, 0) -- (0, 0.75) -- (1.725, 2.8) -- (2.8, 2.8) -- (2.8, 0) -- cycle;
\draw[->] (-0.3, 0) -- (2.9, 0);
\draw[->] (0, -0.3)-- (0, 2.9);
\draw[thick] (-0.1, -0.3) -- (1, 3);
\draw (-0.2, 0.5) -- (1.8, 2.9);
\coordinate (L) at (1.1, 3) node [above] at (L) {$\ell$};
\coordinate (X) at (2.9, 0) node [right] at (X) {$\| \mu_j(\gamma) \|$};
\coordinate (Y) at (0, 2.9) node [above] at (Y) {$\| \mu_\rho(\gamma) \|$};
\coordinate (1) at (0.3, 0.2); \fill (1) circle (1.5pt);
\coordinate (2) at (1.5, 2.2); \fill (2) circle (1.5pt);
\coordinate (3) at (2.6, 0.6); \fill (3) circle (1.5pt);
\coordinate (4) at (1.8, 1.1); \fill (4) circle (1.5pt);
\coordinate (5) at (0.7, 0.9); \fill (5) circle (1.5pt);
\coordinate (6) at (1.4, 1.9); \fill (6) circle (1.5pt);
\coordinate (7) at (1.4, 0.7); \fill (7) circle (1.5pt);
\coordinate (8) at (2.2, 2.1); \fill (8) circle (1.5pt);
\coordinate (9) at (2.0, 2.6); \fill (9) circle (1.5pt);
\coordinate (10) at (0.1, 0.7); \fill (10) circle (1.5pt);
\coordinate (11) at (2.3, 0.4); \fill (11) circle (1.5pt);
\coordinate (12) at (1.2, 0.1); \fill (12) circle (1.5pt);
\coordinate (13) at (1.0, 1.5); \fill (13) circle (1.5pt);
\coordinate (14) at (2.6, 1.4); \fill (14) circle (1.5pt);
\coordinate (15) at (2.5, 2.7); \fill (15) circle (1.5pt);
\coordinate (16) at (0.3, 1.0); \fill (16) circle (1.5pt);\end{tikzpicture}
\caption{The $1 \leqslant s \leqslant \infty$ case}\label{fig:1<=s<=infty}
\endminipage\hfill
\minipage{0.32\textwidth}
\begin{tikzpicture}
\filldraw [fill=gray, opacity=0.3] (0, 0) -- (0.75, 0) -- (2.8, 1.725) -- (2.8, 2.8) -- (0, 2.8) -- cycle;
\draw[->] (-0.3, 0) -- (2.9, 0);
\draw[->] (0, -0.3)-- (0, 2.9);
\draw[thick] (-0.3, -0.1) -- (3, 1);
\draw (0.5, -0.2) -- (2.9, 1.8);
\coordinate (L) at (3, 1.1) node [right] at (L) {$\ell$};
\coordinate (X) at (2.9, 0) node [right] at (X) {$\| \mu_j(\gamma) \|$};
\coordinate (Y) at (0, 2.9) node [above] at (Y) {$\| \mu_\rho(\gamma) \|$};
\coordinate (1) at (0.2, 0.3); \fill (1) circle (1.5pt);
\coordinate (2) at (2.2, 1.5); \fill (2) circle (1.5pt);
\coordinate (3) at (0.6, 2.6); \fill (3) circle (1.5pt);
\coordinate (4) at (1.1, 1.8); \fill (4) circle (1.5pt);
\coordinate (5) at (0.9, 0.7); \fill (5) circle (1.5pt);
\coordinate (6) at (1.9, 1.4); \fill (6) circle (1.5pt);
\coordinate (7) at (0.7, 1.4); \fill (7) circle (1.5pt);
\coordinate (8) at (2.1, 2.2); \fill (8) circle (1.5pt);
\coordinate (9) at (2.6, 2.0); \fill (9) circle (1.5pt);
\coordinate (10) at (0.7, 0.1); \fill (10) circle (1.5pt);
\coordinate (11) at (0.4, 2.3); \fill (11) circle (1.5pt);
\coordinate (12) at (0.1, 1.2); \fill (12) circle (1.5pt);
\coordinate (13) at (1.5, 1.0); \fill (13) circle (1.5pt);
\coordinate (14) at (1.4, 2.6); \fill (14) circle (1.5pt);
\coordinate (15) at (2.7, 2.5); \fill (15) circle (1.5pt);
\coordinate (16) at (1.0, 0.3); \fill (16) circle (1.5pt);\end{tikzpicture}
\caption{The $0 \leqslant s < 1$ case}\label{fig:0<=s<1}
\endminipage
\end{figure}

\begin{remark}
In the above proof, the assumption $C_\textup{Lip}(j, \rho) < 1$ is crucial only when $s = 1$: 
if $1 < s \leqslant \infty$, one can argue as in the case of $0 \leqslant s < 1$. 
\end{remark}

Now, let $G$ be a linear real reductive Lie group, 
and let $L$ be a reductive subgroup of $G$ locally isomorphic to $\OO(n,1) \times \OO(N,1)$.
Since the identity component of $\OO(n,1) \times \OO(N,1)$ is a semisimple Lie group with trivial centre, 
replacing $L$ with its identity component if necessary, 
one may assume without loss of generality the existence of a local isomorphism
\[
f \colon L \to \OO(n,1) \times \OO(N,1). 
\]
For a representation $\rho \colon \Gamma \to \OO(N,1)$, define a discrete subgroup $\widetilde{\Gamma}_{j,\rho}$ of $L$ by
\[
\widetilde{\Gamma}_{j,\rho} = f^{-1}((j,\rho)(\Gamma)).
\]
We remark that any torsion-free finite-index subgroup of $\widetilde{\Gamma}_{j,\rho}$, 
which exists by Selberg's lemma~\cite[Lem.~8]{Sel60}, is also a finite-index subgroup of $\Gamma$. 

\begin{proposition}\label{prop:main-estimate-2}
Let $H$ be a reductive subgroup of $G$ satisfying $\ab_L \not\subset W \ab_H$.
If $\rho$ is sufficiently close to and strictly dominated by $J$, there exist 
$\ep, M > 0$ such that, for any $\widetilde{\gamma} \in \widetilde{\Gamma}_{j,\rho}$, we have 
\[
d(\mu(\widetilde{\gamma}), \mu(H)) \geqslant \ep \lvert \widetilde{\gamma} \rvert - M
\]
(and in particular, $\widetilde{\Gamma}_{j,\rho}$ acts properly on $G/H$ by Fact~\ref{fact:PROP}). 
\end{proposition}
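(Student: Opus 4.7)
The plan is to reduce the desired bound to Proposition~\ref{prop:main-estimate} by first estimating $d(\mu(\widetilde\gamma), \mu(H))$ from below, via $W$-invariance, by a distance to a union of \emph{proper} linear subspaces of $\ab_L$. To set this up, I would observe that the $K_H = K \cap H$ Cartan decomposition $H = K_H \exp(\ab_H) K_H$ yields $\mu(H) = W \ab_H \cap \overline{\ab^+}$, while $\mu(\widetilde\gamma) \in \overline{\ab^+}$ is simply the $W$-translate of $\mu_L(\widetilde\gamma) \in \ab_L$ into the closed positive Weyl chamber. Combining these with the $W$-invariance of the norm on $\ab$ gives
\[
d(\mu(\widetilde\gamma), \mu(H)) \;\geqslant\; d(\mu(\widetilde\gamma), W \ab_H) \;=\; d(\mu_L(\widetilde\gamma), W \ab_H).
\]
Via the differential $df$, I would identify $\ab_L \cong \ab_{\OO(n,1)} \oplus \ab_{\OO(N,1)}$, under which $\mu_L(\widetilde\gamma) = \mu_{j,\rho}(\gamma)$ for any lift $\gamma \in \Gamma$ of $\widetilde\gamma$; since $\ker f$ and $\ker j$ are both finite, $\widetilde\Gamma_{j,\rho}$ is commensurable to $\Gamma$ and $\lvert\widetilde\gamma\rvert \asymp \lvert\gamma\rvert$. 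It therefore suffices to show $d(\mu_{j,\rho}(\gamma), W \ab_H) \geqslant \ep \lvert\gamma\rvert - M$.

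The key geometric input is a small linear-algebra lemma: for any linear subspace $V \subset \ab$ with $V \cap \ab_L \subsetneq \ab_L$, there exists $c_V > 0$ with
\[
d(p, V) \;\geqslant\; c_V \cdot d(p, V \cap \ab_L) \qquad (p \in \ab_L).
\]
Both sides, restricted to $\ab_L$, are seminorms on $\ab_L$ with common null space $V \cap \ab_L$, hence induce norms on the finite-dimensional quotient $\ab_L/(V \cap \ab_L)$, and any two such norms are equivalent. A standard fact (a real vector space is not a finite union of proper subspaces) converts the hypothesis $\ab_L \not\subset W \ab_H$ into $\ab_L \not\subset w \ab_H$ for every $w \in W$ separately, so the lemma applies to each $V_w := w \ab_H$ with a constant $c_w > 0$ and a proper subspace $\ell_w := V_w \cap \ab_L \subsetneq \ab_L$.

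Finally, I would apply Proposition~\ref{prop:main-estimate} to each of the finitely many $\ell_w$: choosing $\rho$ close enough to and strictly dominated by $J$ that the proposition's conclusion holds simultaneously for all $\ell_w$ (a finite intersection of open conditions), there exist $\ep_w, M_w > 0$ with $d(\mu_{j,\rho}(\gamma), \ell_w) \geqslant \ep_w \lvert\gamma\rvert - M_w$. Combining with the seminorm estimate and taking the minimum over the finite set $W$,
\[
d(\mu_L(\widetilde\gamma), W \ab_H) \;=\; \min_{w \in W} d(\mu_L(\widetilde\gamma), w \ab_H) \;\geqslant\; \min_{w \in W} c_w (\ep_w \lvert\gamma\rvert - M_w) \;\geqslant\; \ep \lvert\widetilde\gamma\rvert - M,
\]
and properness then follows from Fact~\ref{fact:PROP}. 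The one place where a bit of care is required is the seminorm lemma: the naive inequality $d(p, V) \geqslant d(p, V \cap \ab_L)$ fails when $V$ meets $\ab_L$ transversely, and the ``angles of intersection'' must be absorbed into uniform constants $c_w$, which is possible precisely because $W$ is finite.
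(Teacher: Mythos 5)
Your proposal is correct and follows essentially the same route as the paper: reduce to bounding $d(\mu_{j,\rho}(\gamma), w\ab_H)$ uniformly over the finitely many $w \in W$, pass from $w\ab_H$ to the proper subspace $\ell_w = \ab_L \cap w\ab_H$ via a comparison constant (your seminorm/quotient-norm argument is the paper's Lemma~\ref{lem:linear-space-estimate}, proved there by orthogonal projection), and then invoke Proposition~\ref{prop:main-estimate} for each $\ell_w$. The only cosmetic difference is that you make explicit a few points the paper leaves implicit (the $W$-invariance step, $\lvert\widetilde{\gamma}\rvert \asymp \lvert\gamma\rvert$, and the uniformity of ``sufficiently close'' over the finite set $W$); note also that deducing $\ab_L \not\subset w\ab_H$ for each $w$ from $\ab_L \not\subset W\ab_H$ is immediate and does not require the ``finite union of proper subspaces'' fact.
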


\begin{proof}
We use the following elementary lemma: 

\begin{lemma}\label{lem:linear-space-estimate}
Let $V$ be a finite-dimensional real vector space, and let $V'$ and $V''$ be two linear subspaces of $V$. 
Fix an inner product on $V$, and denote by $d \colon V \times V \to \R_{\geqslant 0}$ the associated distance function. 
Then, there exists $\delta > 0$ such that, for any $v' \in V'$, we have 
\[
d(v', V'') \geqslant \delta \cdot d(v', V' \cap V''). 
\]
\end{lemma}

\begin{proof}[Proof of Lemma~\ref{lem:linear-space-estimate}]
Let $W$ be the orthogonal complement of $V' \cap V''$ in $V'$,
and write $\pi \colon V' \to W$ for the orthogonal projection.
Define $\delta > 0$ by
\[
\delta = \min_{w \in W,\, \| w \| = 1} d(w, V'').
\]
For any $v' \in V'$, we have 
\[
d(v', V'') = d(\pi(v'), V'') \geqslant \delta \| \pi(v') \|, \qquad
d(v', V' \cap V'') = d(\pi(v'), V' \cap V'') = \| \pi(v') \|.
\]
The lemma is thus proved.
\end{proof}

To prove Proposition~\ref{prop:main-estimate-2}, it is enough to show that there exist
$\ep', M' > 0$ such that, for any $\gamma \in \Gamma$ and $w \in W$, we have 
\[
d\left(\mu_{j,\rho}(\gamma), w\ab_H \right) \geqslant \ep' \lvert \gamma \rvert - M'.
\]
Here, we have regarded $\mu_{j,\rho}(\gamma) \in \ab_{\OO(n,1)} \oplus \ab_{\OO(N,1)}$ as an element of $\ab_L \subset \ab$ via the local isomorphism $f$. 
Applying Lemma~\ref{lem:linear-space-estimate} to $\ell = \ab_L \cap w\ab_H$ and then Proposition~\ref{prop:main-estimate} to 
$(V, V', V'') = (\ab, \ab_L, w\ab_H)$, we obtain this inequality.
\end{proof}

We can now prove Theorems~\ref{thm:main-general} and \ref{thm:main-example}~(1): 

\begin{proof}[Proof of Theorem~\ref{thm:main-general}]
Let $(n,N,P)$ be as in Fact~\ref{fact:DGK1} or Lemma~\ref{lem:DGK2}, 
and define $\Gamma$ and $(\rho_t)_{-\ep < t < \ep}$ accordingly. 
We write $j \colon \Gamma \to \OO(n,1)$ for the inclusion map. 
By Lemma~\ref{lem:DGK3} and Proposition~\ref{prop:main-estimate-2}, for a sufficiently small $t > 0$, 
the discrete subgroup $\widetilde{\Gamma}_{j,{\rho_t}}$ of $L$ acts properly on $G/H$. 
\end{proof}

\begin{proof}[Proof of Theorem~\ref{thm:main-example}~\textup{(1)}]
Since $\OO(2m+1, \C)$ and its split real form $\OO(m+1,m)$ have the same restricted root system, 
it suffices to prove the theorem for $G = \OO(m+1,m)$.
Consider the reductive subgroup $L = \OO(1,m-1) \times \OO(m,1)$ of $G$. 
Under the standard identification $\ab_{\OO(m+1,m)} = \R^m$, we have 
\[
\ab_L = \{ (t_1, t_2, 0, \dots, 0) \mid t_1, t_2 \in \R \}. 
\]
It follows that $\ab_L \not\subset W \ab_H$. Indeed, $(1, 3, 0, \dots, 0) \in \ab_L \smallsetminus W \ab_H$. 
We can thus apply Theorem~\ref{thm:main-general}. 
\end{proof}

\begin{remark}\label{rmk:sharp}
By construction, the above discrete subgroup $\widetilde{\Gamma}_{j,{\rho_t}}$ (with $t$ sufficiently small) is 
\emph{sharply embedded} in $G$ with respect to $H$ in the sense of Kassel--Tholozan~\cite[Defn.~1.6]{KT24+}. 
\end{remark}

\begin{remark}\label{rmk:continuously-many}
Take an arbitrarily small $\varepsilon > 0$. 
Let us see that, even up to conjugation, there are continuously many discrete subgroups of $G$ of the form 
$\widetilde{\Gamma}_{j, \rho_t} \ (0 < t < \ep)$. By the definition of $C_\textup{Lip}$, we have 
\[
C_\textup{Lip}(\rho_0, \rho_t) \cdot C_\textup{Lip}(\rho_t, \rho_0) 
\leqslant C_\textup{Lip}(\rho_0, \rho_0) \qquad (0 \leqslant t < \ep). 
\]
This together with Lemmas~\ref{lem:GK5} and \ref{lem:DGK3} yields 
\[
C_\textup{Lip}(\rho_t, \rho_0) \geqslant \cosh t \ (\geqslant 1) \qquad (0 \leqslant t < \ep). 
\]
Now, by Facts~\ref{fact:GK1} and \ref{fact:GK3}~(2), the following function is continuous: 
\[
\lbrack 0, \ep) \to \lbrack 1, \infty), \qquad t \mapsto C_\textup{Car}(\rho_t, \rho_0).
\]
By the intermediate value theorem, for each $1 < c < \cosh \ep$, there exists $0 < t(c) < \ep$ such that $C_\textup{Car}(\rho_{t(c)}, \rho_0) = c$. 
Now, if $1 < c < c' < \cosh \ep$,
the \emph{asymptotic cone} (Benoist~\cite[\S 4]{Ben97}) to $\mu_{j, \rho_{t(c)}}(\Gamma)$ does not contain 
$(c', 1) \in \R_{\geqslant 0}^2$, while the asymptotic cone to $\mu_{j, \rho_{t(c)}}(\Gamma)$ does. 
This implies that $\mu(\widetilde{\Gamma}_{j, \rho_{t(c)}})$ and $\mu(\widetilde{\Gamma}_{j, \rho_{t(c')}})$ 
have different asymptotic cones in $\overline{\ab^+}$. 
In particular, they are not conjugate in $G$. 
\end{remark}

\begin{remark}
Let us say that a subgroup of $\OO(n,1)$ is a \emph{convex cocompact right-angled reflection group} 
if it is generated by the reflections with respect to the facets of $P$ in $\Hyp^n$, 
where $P$ is a (possibly unbounded) right-angled polytope in $\Hyp^n$ with finitely many facets and without ideal vertices. 
Note that such a subgroup is always convex cocompact (see Desgroseilliers--Haglund~\cite[Thm.~4.7]{DH13}). 
From Proposition~\ref{prop:main-estimate-2} and Danciger--Gu\'eritaud--Kassel~\cite[Prop.~4.1]{DGK20},
one can deduce the following variant of Theorems~\ref{thm:main-general} and \ref{thm:main-example}~(1): 
\end{remark}

\begin{theorem}\label{thm:right-angled}
Let $\Gamma$ be a convex cocompact right-angled reflection group in $\OO(n,1)$ 
associated with a polytope with $f$ facets. 
Let $G$ be a linear reductive Lie group and $H$ a reductive subgroup of $G$. 
Suppose that $L$ is a reductive subgroup of $G$ with the following two properties: 
\begin{itemize}
\item $L$ is locally isomorphic to $\OO(n,1) \times \OO(n+f, 1)$. 
\item $\ab_L \not\subset W \ab_H$. 
\end{itemize}
Then, there exists a discrete subgroup of $L$ which is commensurable to $\Gamma$ and acts properly on $G/H$. 
In particular, the homogeneous space $G/H$ in Theorem~\ref{thm:main-example} 
admits a proper action of a discrete subgroup of $G$ commensurable to $\Gamma$ whenever $m \geqslant n+f$. 
\end{theorem}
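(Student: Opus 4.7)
The plan is to repeat the proof of Theorem~\ref{thm:main-general} essentially verbatim, substituting the specific deformation results of Fact~\ref{fact:DGK1} and Lemma~\ref{lem:DGK2} with the general statement of Danciger--Gu\'eritaud--Kassel~\cite[Prop.~4.1]{DGK20}. For the convex cocompact right-angled reflection group $\Gamma \subset \OO(n,1)$ associated with a polytope $P$ having $f$ facets, that proposition---applied with the trivial colouring that assigns a distinct colour to each facet---produces $\varepsilon > 0$ and a continuous family of convex cocompact representations $\rho_t \colon \Gamma \to \OO(n+f,1)$ $(-\varepsilon < t < \varepsilon)$ such that $\rho_0$ is the standard block-diagonal composite $\Gamma \hookrightarrow \OO(n,1) \hookrightarrow \OO(n+f,1)$ and, for $0 < s \leqslant t < \varepsilon$, there is a $(\rho_s, \rho_t)$-equivariant $\frac{\cosh s}{\cosh t}$-Lipschitz self-map of $\Hyp^{n+f}$. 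Running the argument of Lemma~\ref{lem:DGK3} verbatim, using the continuity of $C_\textup{Lip}$ (Fact~\ref{fact:GK2}) and Lemma~\ref{lem:GK5}, one obtains $C_\textup{Lip}(\rho_0, \rho_t) \leqslant 1/\cosh t < 1$ for every $0 < t < \varepsilon$, so $\rho_t$ is strictly dominated by $\rho_0 = J$.

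With this input, Proposition~\ref{prop:main-estimate-2} applies with no further modification: writing $j \colon \Gamma \hookrightarrow \OO(n,1)$ for the inclusion, it delivers, for all sufficiently small $t > 0$, a discrete subgroup $\widetilde{\Gamma}_{j, \rho_t} \subset L$ acting properly on $G/H$. Since any torsion-free finite-index subgroup of $\widetilde{\Gamma}_{j, \rho_t}$ (which exists by Selberg's lemma) is also a finite-index subgroup of $\Gamma$, the group $\widetilde{\Gamma}_{j, \rho_t}$ is commensurable with $\Gamma$, as required for the first statement.

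For the ``in particular'' claim applied to the setting of Theorem~\ref{thm:main-example}, the task is to exhibit, whenever $m \geqslant n+f$, a reductive subgroup $L \subset G$ locally isomorphic to $\OO(n,1) \times \OO(n+f,1)$ satisfying $\ab_L \not\subset W\ab_H$. For $G = \OO(m+1,m)$ I would embed $L = \OO(1,n) \times \OO(n+f,1)$ block-diagonally inside $\OO(n+f+1, n+1) \subseteq \OO(m+1,m)$; the signature count requires precisely $m \geqslant n+f$. Under the standard identification $\ab_G \cong \R^m$ with $W$ acting by signed permutations, an appropriate choice of embedding places $\ab_L$ at the coordinate subspace $\{(t_1, t_2, 0, \dots, 0)\}$, and the vector $(1,3,0,\dots,0)$ used in the proof of Theorem~\ref{thm:main-example}(1) witnesses $\ab_L \not\subset W\ab_H$. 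The complex case $G = \OO(2m+1,\C)$ reduces to this one via the identification of its restricted root system with that of its split real form.

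The main obstacle is primarily book-keeping rather than new mathematical content: all substantive ingredients---the properness criterion, the strict-domination estimate, and the deformation theory---are already in place from earlier sections. The one mildly delicate point is to verify that the trivial (per-facet) colouring is an admissible input to \cite[Prop.~4.1]{DGK20}, yielding the stated ambient dimension $n+f$ together with the $\frac{\cosh s}{\cosh t}$-Lipschitz estimate.
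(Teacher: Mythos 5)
Your proposal follows exactly the route the paper intends for Theorem~\ref{thm:right-angled}: the paper itself only remarks that the result is deduced from Proposition~\ref{prop:main-estimate-2} together with Danciger--Gu\'eritaud--Kassel~\cite[Prop.~4.1]{DGK20}, and your argument (per-facet colouring to get the family $\rho_t$ into $\OO(n+f,1)$, the Lemma~\ref{lem:DGK3}-style continuity argument for strict domination, then Proposition~\ref{prop:main-estimate-2} and Selberg's lemma for commensurability, with the block-diagonal $L$ for the ``in particular'' clause) fills in precisely those steps. The only point to settle against \cite[Prop.~4.1]{DGK20} is the exact ambient dimension produced by the trivial colouring, but even if it is $n+f-1$ rather than $n+f$, composing with the totally geodesic inclusion $\Hyp^{n+f-1}\subset\Hyp^{n+f}$ preserves convex cocompactness and all Lipschitz constants, so the argument is unaffected.
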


\section{Proof of Theorem~\ref{thm:main-example}~(2)}\label{sect:proof-sl2}

In this section, we prove Theorem~\ref{thm:main-example}~(2). 

\begin{proof}[Proof of Theorem~\ref{thm:main-example}~(2)]
Since $\OO(2m+1, \C)$ and its split real form $\OO(m+1,m)$ have the same restricted root system, 
it suffices to prove the theorem for $G = \OO(2m+1,\C) \ (m \geqslant 4)$.
We put $\g = \oo(2m+1,\C)$ and use the standard identifications
\[
\ab = \R^m, \qquad W = \Sym_m \ltimes \{ \pm 1 \}^m, \qquad 
\overline{\ab^+} = \{ (t_1, \dots, t_m) \in \R^m \mid t_1 \geqslant \dots \geqslant t_m \geqslant 0 \}. 
\]

We first observe that the weighted Dynkin diagram given in Fact~\ref{fact:typeB-2} corresponds to 
\[
(\underbrace{2m, \dots, 2m}_{a_{2m}}, \dots, \underbrace{1, \dots, 1}_{a_1}, \underbrace{0, \dots, 0}_{a_0}) \in \overline{\ab^+}, 
\]
where 
\[
a_0 = \frac{1}{2}\left( \sum_{i=0}^m d_{2i+1} - 1 \right), \qquad a_{2k} = \sum_{i=k}^m d_{2i+1} \quad (1 \leqslant k \leqslant m), \qquad 
a_{2k-1} = \sum_{i=k}^m d_{2i} \quad (1 \leqslant k \leqslant m). 
\]
The condition (\ref{tag:partition}) in Fact~\ref{fact:typeB-2} is rewritten in terms of $a_i$'s as follows: 
\begin{gather*}
a_0, a_2, \dots, a_{2m} \in \N, \qquad 
a_1, a_3, \dots, a_{2m-1} \in 2\N, \\
2a_0+1 \geqslant a_2 \geqslant a_4 \geqslant \dots \geqslant a_{2m}, \qquad 
a_1 \geqslant a_3 \geqslant \dots \geqslant a_{2m-1}, \qquad \sum_{i=0}^{2m} a_i = m. 
\tag{$\ddag_m$}\label{tag:rewritten}
\end{gather*}

Thus, to prove Theorem~\ref{thm:main-example}~(2), 
it is enough to show the following assertion for every $m \geqslant 4$: 

\begin{itemize}[label = $(\ast_m)$]
\item \label{item:ast_m} \textit{Let $\mathbf{a} = (a_0, \dots, a_{2m})$ 
be a sequence satisfying \textup{(\ref{tag:rewritten})}, and put 
\[
v_{\mathbf{a}} = (\underbrace{2m, \dots, 2m}_{a_{2m}}, \dots, \underbrace{1, \dots, 1}_{a_1}, \underbrace{0, \dots, 0}_{a_0}) \in \R^m.
\]
Then, there exists $\sigma \in \Sym_m \ltimes \{ \pm 1 \}^m$ such that 
\[
\left\langle v_\mathbf{a},\, \sigma (2, \underbrace{1, \dots, 1}_{m-2}, 0) \right\rangle = 0,
\]
where $\langle {-}, {-} \rangle$ is the standard inner product on $\R^m$.}
\end{itemize}

Assume that (\hyperref[item:ast_m]{$\ast_m$}) were false for some $m \geqslant 4$, and let 
$\mathbf{a} = (a_0, \dots, a_{2m})$ be one of the counterexamples. 
Without loss of generality, 
we may assume that (\hyperref[item:ast_m]{$\ast_{m'}$}) is true for $4 \leqslant m' \leqslant m-1$. 
We shall prove in eight steps that such a sequence $\mathbf{a} = (a_0, \dots, a_{2m})$ cannot exist.

\textbf{Step 1}: Firstly, we must have $m \geqslant 5$, 
as the following case-by-case analysis shows that (\hyperref[item:ast_m]{$\ast_4$}) is true: 

\begin{center}
\begin{longtable}{c|c} \toprule
$v_\mathbf{a}$ & $\sigma (2,1,1,0)$ \\ \hline
$(0,0,0,0)$ & $(2,1,1,0)$ \\ \hline
$(2,0,0,0)$ & $(0,2,1,1)$ \\ \hline
$(1,1,0,0)$ & $(1,-1,2,0)$ \\ \hline
$(2,2,0,0)$ & $(1,-1,2,0)$ \\ \hline
$(4,2,0,0)$ & $(1,-2,1,0)$ \\ \hline
$(2,1,1,0)$ & $(1,-2,0,1)$ \\ \hline
$(2,2,2,0)$ & $(1,1,-2,0)$ \\ \hline
$(4,2,2,0)$ & $(1,-2,0,1)$ \\ \hline
$(6,4,2,0)$ & $(0,1,-2,1)$ \\ \hline
$(1,1,1,1)$ & $(1,1,-2,0)$ \\ \hline
$(4,2,1,1)$ & $(1,-1,-2,0)$ \\ \hline
$(3,3,1,1)$ & $(1,0,-1,-2)$ \\ \hline
$(8,6,4,2)$ & $(0,1,-2,1)$ \\ 
\bottomrule
\caption{The $m=4$ case}
\label{table:m=4}
\end{longtable}
\end{center}

\textbf{Step 2}: Let us show that our counterexample $\mathbf{a} = (a_0, \dots, a_{2m})$ 
must satisfy $a_2 = 2a_0$ or $2a_0+1$. 
Suppose otherwise, and define the sequence $\mathbf{a}' = (a'_0, \dots, a'_{2m-2})$ by 
\[
a'_i = \begin{cases} a_i-1 & (i = 0), \\ a_i & (\text{otherwise}), \end{cases}
\]
which satisfies (\hyperref[tag:rewritten]{$\ddag_{m-1}$}) 
(we remark that our assumption $a_2 \leqslant 2a_0 - 1$ implies $a_0 \geqslant 1$). 
By the induction hypothesis, one can apply (\hyperref[item:ast_m]{$\ast_{m-1}$}) to $\mathbf{a}'$ to see that 
\[
\left\langle v_{\mathbf{a}'},\, \sigma' (2, \underbrace{1, \dots, 1}_{m-3}, 0) \right\rangle = 0
\]
holds for some $\sigma' \in \Sym_{m-1} \ltimes \{ \pm 1 \}^{m-1}$.
Assigning $1$ to the remaining $0$ in $\mathbf{a}$, one obtains $\sigma \in \Sym_m \ltimes \{ \pm 1 \}^m$ satisfying 
\[
\left\langle v_\mathbf{a},\, \sigma (2, \underbrace{1, \dots, 1}_{m-2}, 0) \right\rangle = 0,
\]
which is a contradiction. 

\textbf{Step 3}: We now see that $m \geqslant 6$. 
The following table exhausts all possibilities for (\hyperref[item:ast_m]{$\ast_5$}) that are not excluded by Step 2: 

\begin{center}
\begin{longtable}{c|c} \toprule
$v_\mathbf{a}$ & $\sigma (2,1,1,1,0)$ \\ \hline
$(4,2,2,2,0)$ & $(1,-1,-1,0,2)$ \\ \hline
$(2,2,1,1,0)$ & $(1,0,-1,-1,2)$ \\ \hline
$(4,4,2,2,0)$ & $(1,0,-1,-1,2)$ \\ \hline
$(6,4,2,2,0)$ & $(0,1,-1,-1,2)$ \\ \hline
$(2,1,1,1,1)$ & $(1,1,-1,-2,0)$ \\ \hline
$(3,3,2,1,1)$ & $(1,-1,1,-2,0)$ \\ \hline
$(6,4,2,1,1)$ & $(0,1,-2,1,-1)$ \\ \hline
$(10,8,6,4,2)$ & $(1,-1,1,-2,0)$ \\
\bottomrule
\caption{The $m=5$ case}
\label{table:m=5}
\end{longtable}
\end{center}

\textbf{Step 4}: Let us show that our sequence $\mathbf{a} = (a_0, \dots, a_{2m})$ 
must satisfy $a_{2i-1} = 0$ for all $1 \leqslant i \leqslant m$. 
Suppose otherwise, and let $i_0$ be the largest number satisfying $a_{2i_0-1} \geqslant 2$. 
Then, the sequence $\mathbf{a}' = (a'_0, \dots, a'_{2m-4})$ defined by 
\[
a'_i = \begin{cases} a_i - 2 & (i = 2i_0-1), \\ a_i & (\text{otherwise}) \end{cases}
\]
satisfies (\hyperref[tag:rewritten]{$\ddag_{m-2}$}). 
By the induction hypothesis, one can apply (\hyperref[item:ast_m]{$\ast_{m-2}$}) to $\mathbf{a}'$ to see that 
\[
\left\langle v_{\mathbf{a}'},\, \sigma' (2, \underbrace{1, \dots, 1}_{m-4}, 0) \right\rangle = 0
\]
for some $\sigma' \in \Sym_{m-2} \ltimes \{ \pm 1 \}^{m-2}$. 
Now, assigning $1$ and $-1$ to the remaining two $(2i_0-1)$'s, 
one obtains $\sigma \in \Sym_m \ltimes \{ \pm 1 \}^m$ satisfying 
\[
\left\langle v_\mathbf{a},\, \sigma (2, \underbrace{1, \dots, 1}_{m-2}, 0) \right\rangle = 0,
\]
which is a contradiction. 

\textbf{Step 5}: Let $i_0$ be the largest number satisfying $a_{2i_0} \geqslant 1$. 
For the same reason as Step 4, we must have $a_{2i_0} = 1$. 
One can similarly prove 
\[
0 \leqslant a_{2i-2} - a_{2i} \leqslant 1 \qquad (2 \leqslant i \leqslant i_0).
\]

\textbf{Step 6}: We now see that $m \geqslant 8$ must hold. 
Indeed, the following tables exhaust all possibilities for 
(\hyperref[item:ast_m]{$\ast_6$}) and (\hyperref[item:ast_m]{$\ast_7$}) that are not excluded by Steps 2, 4, and 5: 

\begin{center}
\begin{longtable}{c|c} \toprule
$v_\mathbf{a}$ & $\sigma (2,1,1,1,1,0)$ \\ \hline
$(6,4,4,2,2,0)$ & $(0,1,-1,1,-1,2)$ \\ \hline
$(8,6,4,2,2,0)$ & $(1,-1,-1,1,0,2)$ \\ \hline
$(12,10,8,6,4,2)$ & $(0,1,-1,1,-1,-2)$ \\
\bottomrule
\caption{The $m=6$ case}
\label{table:m=6}
\end{longtable}
\begin{longtable}{c|c} \toprule
$v_\mathbf{a}$ & $\sigma (2,1,1,1,1,1,0)$ \\ \hline
$(6,4,4,2,2,2,0)$ & $(0,1,-1,1,-2,1,1)$ \\ \hline
$(8,6,4,4,2,2,0)$ & $(0,1,-1,1,-1,-2,1)$ \\ \hline
$(10,8,6,4,2,2,0)$ & $(0,1,-1,1,-1,-2,1)$ \\ \hline
$(14,12,10,8,6,4,2)$ & $(0,1,1,-1,-1,-1,-2)$ \\
\bottomrule
\caption{The $m=7$ case}
\label{table:m=7}
\end{longtable}
\end{center}

\textbf{Step 7}: Let us show that our sequence $\mathbf{a} = (a_0, \dots, a_{2m})$ 
must satisfy $a_{2i} = 0$ for $4 \leqslant i \leqslant m$. 
Suppose otherwise, and let $i_0$ be the largest number satisfying $a_{2i_0} \geqslant 1$. 
Then, the sequence
$\mathbf{a}' = (a'_0, \dots, a'_{2m-8})$ defined by 
\[
a'_i = \begin{cases} a_i - 1 & (i = 2i_0-6, 2i_0-4, 2i_0-2, 2i_0), \\ a_i & (\text{otherwise}) \end{cases}
\]
satisfies (\hyperref[tag:rewritten]{$\ddag_{m-4}$}). 
Arguing as before and observing 
\[
\langle (2i_0-6,2i_0-4,2i_0-2,2i_0),\, (1,-1,-1,1) \rangle = 0,
\]
one can see that (\hyperref[item:ast_m]{$\ast_m$}) is true for $\mathbf{a}'$, which is a contradiction. 

\textbf{Step 8}: By Steps 4 and 7, our sequence $\mathbf{a} = (a_0, \dots, a_{2m})$ 
satisfies $a_i = 0$ except possibly for $i=0,2,4,6$. 
By Steps 2 and 5, we have 
\[
8 \leqslant m = a_6 + a_4 + a_2 + a_0 \leqslant 1 + 2 + 3 + 1 = 7,
\]
which is absurd. The proof is now completed. 
\end{proof}

\section{Some open questions}\label{sect:open-question}

In this section, we discuss some open questions raised by this work. 

\subsection{On proper actions of surface groups}\label{subsect:ques-surface}

Concerning connections between the conditions 
(P-sl$_2$R), (P-surf), (P-free), (P-nva), and (P-inf), the last remaining problem is the following:

\begin{conjecture}\label{conj:free-vs-surf}
The condition \textup{(P-free)} does not imply \textup{(P-surf)}, 
namely, there exists a homogeneous space $G/H$ of reductive type 
which admits a proper action of a non-abelian free discrete subgroup of $G$ 
but does not admit a proper action of a discrete surface subgroup of $G$ of genus $\geqslant 2$. 
\end{conjecture}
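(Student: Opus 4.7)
The plan is to identify a candidate homogeneous space $G/H$ of reductive type for which Benoist's criterion $\ab^{-w_0} \not\subset W\ab_H$ holds (giving (P-free) by Fact~\ref{fact:Ben96}) but for which the set of directions in $\overline{\ab^+} \setminus W\ab_H$ is as restricted as possible — ideally a single half-line up to the $W$-action. Okuda's example $G/H = \SL(5,\R)/H$ from Fact~\ref{fact:Okuda} is the natural first candidate, together with its obvious variants (e.g.\ replacing $\SL(5,\R)$ by $\SL(n,\R)$ with a similar hyperplane condition defining $\ab_H$). The first step would be to compute, as in Section~\ref{subsect:sl2}, the set of weighted Dynkin diagrams meeting $W\ab_H$ and locate the ``exceptional'' rays in $\overline{\ab^+} \setminus W\ab_H$ that a potential discontinuous group would have to approach.

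The main technical step is to derive an obstruction to (P-surf) from this data. If $\rho \colon \Gamma_g \to G$ is a faithful discrete representation of a surface group of genus $\geq 2$ giving a proper action on $G/H$, then by Fact~\ref{fact:PROP} the set $\mu(\rho(\Gamma_g))$ stays at bounded distance from no tubular neighborhood of $W\ab_H$; equivalently, its asymptotic cone in the sense of Benoist~\cite{Ben97} is a closed cone in $\overline{\ab^+}$ meeting $W\ab_H \cap \overline{\ab^+}$ only at the origin. For the targeted $G/H$ this cone is forced to concentrate near the exceptional ray. One then tries to combine this rigidity with surface-group-specific features: the one-endedness of $\Gamma_g$, its quadratic Dehn function, the fact that every Anosov representation of $\Gamma_g$ has a limit set homeomorphic to a circle in the appropriate flag variety, and Patterson--Sullivan/Benoist--Quint theory for the limit cone. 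The goal is to conclude that such a concentrated asymptotic cone can only arise when $\rho$ factors, up to finite kernel, through a closed subgroup of $G$ locally isomorphic to $\SL(2,\R)$, contradicting the failure of (P-sl$_2$R).

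A parallel strategy, more constructive in spirit, would be to engineer $G/H$ so that even the class of proper actions obtained by the techniques of Section~\ref{sect:proof-cocHn} (convex cocompact representations deformed in the style of \cite{GK17}, \cite{DGK20}) is ruled out for surface groups while remaining available for free groups. Concretely, one looks for $G$, $H$, and a reductive subgroup $L$ of $G$ with $\ab_L \not\subset W\ab_H$ such that $L$ is locally isomorphic to $\OO(n,1) \times \OO(N,1)$ with $(n,N)$ outside the range $\{(2,3),(3,6),(4,8)\}$ covered by Fact~\ref{fact:DGK1} and Lemma~\ref{lem:DGK2}: the Danciger--Gu\'eritaud--Kassel deformations provide strict domination for right-angled Coxeter (hence virtually free) groups in all dimensions, but for cocompact surface subgroups the necessary strictly-dominated deformations are known to exist only in very limited cases.

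The principal obstacle is the absence of any root-theoretic or Lie-theoretic invariant detecting (P-surf) analogous to the Calabi--Markus criterion for (P-inf) (Fact~\ref{fact:Calabi-Markus}) or Benoist's criterion for (P-free) (Fact~\ref{fact:Ben96}). Exotic non-Anosov discrete surface group representations into higher-rank $G$ — in the spirit of Salein's examples~\cite{Sal00}, Lee--Marquis~\cite{LM19}, or the constructions underlying Example~\ref{ex:non-lattice} — could \emph{a priori} exhibit Cartan projections no existing technique rules out, and ruling them all out simultaneously is precisely what is difficult. A realistic resolution therefore likely requires either a new obstruction combining the combinatorics of $W\ab_H$ with a genuinely two-dimensional feature of $\Gamma_g$ (for instance, its quadratic isoperimetric behavior or the non-triviality of $H^2(\Gamma_g;\R)$), or a completely explicit low-rank example in which all discrete representations of $\Gamma_g$ into $G$ can be classified by hand.
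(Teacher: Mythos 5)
This statement is an open conjecture: the paper offers no proof of it, and explicitly records that ``there are no available tools to prove the failure of (P-surf) other than showing $\ab^{-w_0} \subset W\ab_H$'', i.e.\ other than disproving (P-free) itself. Your proposal correctly diagnoses this as the principal obstacle, but it does not close it, so what you have is a research program rather than a proof. The decisive missing ingredient is a necessary condition for (P-surf) that is strictly stronger than the negation of Benoist's criterion; your second paragraph lists plausible sources for such an obstruction (one-endedness, Dehn functions, Anosov limit sets, Patterson--Sullivan theory) but the sentence beginning ``The goal is to conclude\dots'' is exactly the open problem restated, not an argument.

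Two specific steps would also fail as written. First, the claimed equivalence between properness and the asymptotic cone of $\mu(\rho(\Gamma_g))$ meeting $W\ab_H$ only at the origin is false in the direction you need: properness (Fact~\ref{fact:PROP}) only says that the intersection of $\mu(\rho(\Gamma_g))$ with each tubular neighbourhood of $\mu(H)$ is bounded, and a sequence such as $\mu(\gamma_k) \sim (k,\sqrt{k})$ shows that the limit cone of a properly acting group can still contain rays of $W\ab_H$; hence the ``rigidity'' forcing the cone to concentrate near an exceptional ray does not follow. (The converse direction, trivial cone intersection implies properness, is the one that holds.) Second, your third strategy --- engineering $G/H$ so that the known construction techniques (strictly dominated deformations of convex cocompact representations, Danciger--Gu\'eritaud--Kassel) fail for surface groups --- cannot establish the conjecture, since (P-surf) quantifies over \emph{all} discrete surface subgroups of $G$, not only those produced by these methods; in particular exotic, non-Anosov surface group representations in the spirit of Salein or Lee--Marquis are not excluded. (As a minor point, cocompact right-angled reflection groups in $\Hyp^n$ for $n \geqslant 2$ are generally not virtually free, so the parenthetical in that paragraph is incorrect.)
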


As far as the authors know, there are no available tools to prove the failure of (P-surf) 
other than showing $\ab^{-w_0} \subset W\ab_H$ (i.e.\ showing the failure of (P-free)). 
We need a new necessary condition for a homogeneous space to be (P-surf) to solve this conjecture. 

Before our work, there were four known examples of homogeneous spaces of reductive type 
which are (P-free) but not (P-sl$_2$R): 
\begin{enumerate}[label = (\roman*)]
\item Okuda's first example which we restated in Fact~\ref{fact:Okuda}. 
\item An example given in the first author's paper~\cite[Ex.~1]{Boc17}, where 
$(\g, \h) = (\oo(4,4),\, \ssl(2,\R) \oplus \R)$.
\item Two examples given in the first author's joint work~\cite[Thm.~2]{BdGJT22+} 
with de Graaf, Jastrz\c{e}bski, and Tralle, where 
$(\g, \h) = (\mathfrak{e}_{7(7)},\, \mathfrak{f}_{4(4)} \oplus \ssl(2,\R))$ and $
(\mathfrak{e}_{8(8)},\, \mathfrak{f}_{4(4)} \oplus \mathfrak{g}_{2(2)})$.
\end{enumerate}

One can easily see that the example (ii) satisfies the assumptions of Theorem~\ref{thm:main-general} for $n=2$ 
(and therefore, is (P-surf)). 
On the other hand, $\SL(5,\R)$ does not admit a reductive subgroup locally isomorphic to $\OO(2,1) \times \OO(3,1)$, 
hence our method does not apply to the example (i). 
We have not checked yet if our method works for the two exceptional examples (iii). 
We thus ask: 

\begin{problem}
Determine if Okuda's homogeneous space \textup{(i)} and the two exceptional homogeneous spaces \textup{(iii)} 
are \textup{(P-surf)} or not. 
\end{problem}

See Remark~\ref{rmk:failed} for our failed attempt to show that Okuda's homogeneous space is (P-surf). 

We also make the following conjecture:

\begin{conjecture}\label{conj:other-G}
For any linear real simple Lie group $G$ with sufficiently high real rank, 
there exists a reductive subgroup $H$ of $G$ for which $G/H$ is \textup{(P-surf)} but not \textup{(P-ss)}. 
\end{conjecture}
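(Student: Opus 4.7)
The plan is to imitate the two-sided strategy behind Theorems~\ref{thm:main-general} and \ref{thm:main-example}. For each linear simple Lie group $G$ with sufficiently large $\rank_\R G$, I would try to construct a reductive subgroup $H \subset G$ whose flat $\ab_H$ is a hyperplane in $\ab$ enjoying two properties:
\begin{enumerate}[label = (\alph*)]
\item $W \ab_H$ contains every standard neutral element $h \in \overline{\ab^+}$ attached, via Jacobson--Morozov (Fact~\ref{fact:Jacobson-Morozov}), to a nilpotent orbit in $\g$;
\item some reductive $L \subset G$ locally isomorphic to $\OO(2,1) \times \OO(3,1)$ satisfies $\ab_L \not\subset W \ab_H$.
\end{enumerate}
Property (a) forces the Cartan projection of every closed subgroup of $G$ locally isomorphic to $\SL(2,\R)$ into $W\ab_H$, whence Fact~\ref{fact:PROP} rules out (P-ss); property (b) feeds Theorem~\ref{thm:main-general} and delivers a proper action on $G/H$ of a cocompact lattice of $\OO(2,1)$, i.e.\ a discrete surface subgroup.

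For the classical families ($A_\ell$, $B_\ell$, $C_\ell$, $D_\ell$ and each of their real forms) I would look for $\ab_H$ cut out by a single linear equation $\sum_i c_i t_i = 0$, with integer coefficients $c_i$ rigged so that every partition satisfying the relevant parity constraints (as in Facts~\ref{fact:typeB-1} and~\ref{fact:typeB-2} and their $A/C/D$ analogues) admits a signed permutation killing the form~-- exactly the role played by condition \textup{($\ast_m$)} in the proof of Theorem~\ref{thm:main-example}(2). Once such coefficients exist, condition (b) is usually painless: $\OO(2,1) \times \OO(3,1)$ embeds in any classical $G$ of large enough rank (via orthogonal blocks, or via the $\PSL(2,\C)$-description of $\OO(3,1)^0 \subset \SL(4,\R) \subset \SL(\ell+1,\R)$ in type $A$), and $\ab_L$ is then a fixed $2$-plane which one can avoid by perturbing the $c_i$ generically. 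For the five exceptional types I would attempt the same strategy using the Bala--Carter classification and the tabulated weighted Dynkin diagrams; since each exceptional $\g$ has only finitely many nilpotent orbits, this reduces to a finite (if intricate) case check.

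The main obstacle is (a): it is a combinatorial condition whose complexity grows with $\rank_\R G$, and even the single instance handled in Theorem~\ref{thm:main-example}(2) required an eight-step induction together with the explicit tables for $m = 4, 5, 6, 7$. A uniform hyperplane is unlikely to suffice across all real forms; I expect the coefficients $c_i$ to depend on both the type and the rank, and the verification to involve several nested inductions of the same flavour as Section~\ref{sect:proof-sl2}. A secondary, more conceptual difficulty is pinpointing what \emph{sufficiently high real rank} should mean: in low-rank borderline cases the pigeon-hole arithmetic of Section~\ref{sect:proof-sl2} may force $\ab_L$ back into $W\ab_H$ whenever (a) is imposed, so part of the work consists in identifying the correct threshold type by type and, ideally, in replacing a type-by-type argument by a clean uniform bound.
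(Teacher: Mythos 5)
This statement is labelled as a conjecture in the paper, and the authors do not prove it: they only remark that they are ``inclined to believe'' that Theorem~\ref{thm:main-general} suffices to establish it for real rank at least $5$, that it fails for $\rank_\R G \leqslant 3$, and that the rank-$4$ case is problematic because $\SL(5,\R)$ contains no reductive subgroup locally isomorphic to $\OO(2,1)\times\OO(3,1)$. Your proposal is therefore not being measured against a proof in the paper, and, as written, it is not a proof either. It is a programme. The two-sided strategy you describe --- condition (b) feeding Theorem~\ref{thm:main-general} with $(n,N)=(2,3)$ to get (P-surf), and condition (a) forcing every standard neutral element into $W\ab_H$ so that Fact~\ref{fact:PROP} kills (P-ss) --- is exactly the mechanism the authors use for their single family in Theorem~\ref{thm:main-example}, and is plainly what they have in mind for the conjecture. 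But the entire content of the conjecture is whether a hyperplane $\ab_H$ satisfying (a) and (b) simultaneously \emph{exists} for each simple $G$ of large real rank, realised by an actual reductive subgroup $H$ (not merely an abstract hyperplane in $\ab$), and you explicitly defer this: you say the coefficients $c_i$ should be ``rigged'' so that every admissible partition admits a signed permutation killing the form, and you acknowledge that verifying this is the hard part. That verification is the theorem; without it nothing is proved.

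Two further points where the sketch is thinner than it looks. First, for non-split real forms the relevant combinatorics is not that of complex nilpotent orbits (Bala--Carter, or the partition classifications of Facts~\ref{fact:typeB-1} and \ref{fact:typeB-2}) but of the weighted Dynkin diagrams attached to \emph{real} nilpotent orbits expressed in the restricted root system; this is what Okuda's Satake-diagram machinery computes, and the admissible label sets vary with the real form, so a single ``parity constraint on partitions'' argument per Cartan type will not cover all real forms. Second, your suggestion to secure (b) by ``perturbing the $c_i$ generically'' is in tension with (a): condition (a) pins $W\ab_H$ down against a finite but rank-dependent list of rays, and a generic perturbation will typically destroy (a). The paper's own example shows how delicate this balance is --- the specific hyperplane $2t_1+t_2+\dots+t_{m-1}=0$ and the eight-step induction of Section~\ref{sect:proof-sl2} are needed for one family in one pair of types. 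In short: correct identification of the intended approach, but the conjecture remains open and your proposal does not close it.
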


\begin{remark}
The conjecture does not hold when $\rank_\R G \leqslant 3$. See Remark~\ref{rmk:forthcoming} below.
\end{remark}

We are inclined to believe that Theorem~\ref{thm:main-general} is enough to show this conjecture 
(even for any $G$ of real rank $\geqslant 5$, perhaps), although it may well be wrong. 
The case where $\rank_\R G = 4$ should be difficult: as we explained above, our method does not work for $G = \SL(5,\R)$. 

\subsection{On proper actions of other discrete subgroups}

Theorem~\ref{thm:main-example} showed that not only (P-surf) but even (P-cocH$^4$) is not enough to ensure (P-ss). 
We conjecture the following stronger result: 

\begin{conjecture}
For any $n \geqslant 2$, 
there is a homogeneous space $G/H$ of reductive type with the following two properties: 
\begin{itemize}
\item $G/H$ is \textup{(P-cocH$^n$)}, 
i.e.\ there exists a discrete subgroup of $G$ which is isomorphic to a cocompact lattice of $\OO(n,1)$ and acts properly on $G/H$. 
\item $G/H$ is not \textup{(P-ss)}, 
i.e.\ there does not exist a non-compact semisimple subgroup of $G$ which acts properly on $G/H$. 
\end{itemize}
\end{conjecture}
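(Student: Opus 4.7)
The plan is to extend the strategy of Theorem~\ref{thm:main-example} from $n = 2, 3, 4$ to arbitrary $n \geqslant 2$. That strategy cleanly splits into two independent halves: part (a), producing a proper action of a cocompact lattice of $\OO(n,1)$ on a suitably chosen $G/H$ via the convex-cocompactness machinery of Section~\ref{sect:proof-cocHn}, and part (b), ruling out proper actions of non-compact semisimple subgroups via the nilpotent-orbit analysis of Section~\ref{sect:proof-sl2}. Part (b) is entirely insensitive to $n$: once the combinatorial assertion $(\ast_m)$ is established for large $m$, the family $G = \OO(m+1,m)$ together with $\ab_H = \{(t_i) \mid 2t_1 + t_2 + \dots + t_{m-1} = 0\}$ already fails \textup{(P-ss)} for every $m \geqslant 4$, independently of $n$. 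So the entire difficulty is concentrated in part (a).

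For part (a), Proposition~\ref{prop:main-estimate-2} reduces the task to producing, for each $n \geqslant 2$, a cocompact lattice $\Gamma$ of $\OO(n,1)$ and a representation $\rho \colon \Gamma \to \OO(N,1)$ strictly dominated by the standard inclusion $j \colon \Gamma \hookrightarrow \OO(n,1) \hookrightarrow \OO(N,1)$, for some $N \geqslant n$ chosen so that a reductive subgroup $L$ of $G$ locally isomorphic to $\OO(n,1) \times \OO(N,1)$ exists and satisfies $\ab_L \not\subset W\ab_H$. The last two requirements are purely root-theoretic and can always be arranged by taking $m$ sufficiently large, so the genuine issue is the existence of a strictly dominated deformation when $n \geqslant 5$. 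Here Fact~\ref{fact:DGK1} and Lemma~\ref{lem:DGK2} are unavailable, and the reflection-group route of Theorem~\ref{thm:right-angled} also fails, because compact right-angled polytopes in $\Hyp^n$ are known not to exist for $n \geqslant 5$.

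Three avenues seem plausible. First, a bending deformation in the spirit of Johnson--Millson along a codimension-one totally geodesic hypersurface of an arithmetic cocompact lattice may, for a sufficiently small bending parameter $t$, give a combined representation $(j, \rho_t)$ with $C_\textup{Lip}(j, \rho_t) < 1$. Second, one may try to realize $\rho$ as a small Anosov deformation into a higher-rank Lie group and invoke Kassel--Tholozan-style sharp-embedding techniques to control the Cartan projection ratio. Third, in specific arithmetic situations, one may build $\rho$ as the holonomy of a flat bundle of small curvature over $\Gamma \backslash \Hyp^n$. The main obstacle, common to all three approaches, is the absence of a geometric source of uniform contraction analogous to the right-angled polytopes underlying the Danciger--Gu\'eritaud--Kassel construction in low dimensions: bending by itself is typically length-preserving or length-increasing on translation lengths, and forcing $\| \mu_\rho(\gamma) \|$ to be uniformly bounded by a constant strictly less than $\| \mu_j(\gamma) \|$ appears to require a genuinely new mechanism. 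Any general existence theorem asserting that every cocompact lattice of $\OO(n,1)$ admits a nontrivial convex cocompact deformation strictly dominated by its standard inclusion into some $\OO(N,1)$ would, via Proposition~\ref{prop:main-estimate-2}, immediately yield the conjecture.
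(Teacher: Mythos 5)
The statement you are asked to prove is stated in the paper as a \emph{conjecture}, not a theorem: the paper offers no proof of it, and your proposal does not supply one either. What you have written is an accurate diagnosis of why the problem is open rather than an argument that settles it. Your reduction is exactly the one the authors themselves make: part (b) is indeed independent of $n$ (the family $G = \OO(m+1,m)$ with the given $\ab_H$ fails (P-ss) for all $m \geqslant 4$ by Theorem~\ref{thm:main-example}~(2)), and part (a) reduces via Proposition~\ref{prop:main-estimate-2} to finding, for each $n$, a cocompact lattice $\Gamma \subset \OO(n,1)$ and a small deformation of its standard embedding into some $\OO(N,1)$ that is strictly dominated by it. This is precisely Conjecture~\ref{conj:strict-domination}, which the authors state explicitly and acknowledge as open for $n \geqslant 5$, for the very reason you identify: the Potyagailo--Vinberg non-existence of cocompact right-angled reflection groups in $\Hyp^n$ for $n \geqslant 5$ removes the only known source (Fact~\ref{fact:DGK1}, Lemma~\ref{lem:DGK2}, Theorem~\ref{thm:right-angled}) of such strictly dominated deformations.

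The genuine gap is therefore concentrated in your ``three avenues'' paragraph, none of which is carried out or could be carried out as stated. Bending deformations do not decrease translation lengths uniformly, as you yourself concede, so they cannot produce $C_\textup{Lip}(j,\rho) < 1$; the Anosov and flat-bundle suggestions are not accompanied by any mechanism forcing the required uniform contraction of Cartan projections, and ``sharpness'' in the sense of Kassel--Tholozan only gives linear lower bounds away from $\mu(H)$, not the strict domination $\sup_\gamma (\|\mu_\rho(\gamma)\| - c\|\mu_j(\gamma)\|) < \infty$ with $c < 1$ that Proposition~\ref{prop:main-estimate-2} needs. Your final sentence --- that a general existence theorem for strictly dominated deformations would yield the conjecture --- is correct and matches the paper's own observation, but it restates the open problem rather than solving it. In short: the reduction is right and faithful to the paper, but the essential new input for $n \geqslant 5$ is missing, and no proof of the conjecture is currently known.
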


By the discussion in Section~\ref{sect:proof-cocHn}, this conjecture would follow once we could prove the following: 

\begin{conjecture}\label{conj:strict-domination}
For any $n \geqslant 2$, there exists a cocompact lattice $\Gamma$ of $\OO(n,1)$ and $N \geqslant n$ 
such that any neighbourhood of the standard embedding
\[
\Gamma \hookrightarrow \OO(n,1) \hookrightarrow \OO(N,1)
\]
in $\Hom(\Gamma, \OO(N,1))$ contains a representation strictly dominated by it. 
\end{conjecture}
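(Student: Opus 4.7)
The existing proof for $n \in \{2, 3, 4\}$ (Fact~\ref{fact:DGK1}) relies on the existence of compact right-angled polytopes in $\Hyp^n$, which fail to exist once $n \geqslant 5$ by the Potyagailo--Vinberg theorem. Hence a fundamentally new construction is needed. My plan is to attempt, in order of increasing generality, three approaches to construct a representation strictly dominated by the standard embedding.

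First, I would try to adapt the Danciger--Gu\'eritaud--Kassel construction to a broader class of cocompact reflection lattices. Although cocompact right-angled hyperbolic reflection groups do not exist in high dimensions, cocompact Coxeter lattices with ``many'' right dihedral angles, or arithmetic lattices commensurable with them (Vinberg, Nikulin), do exist for certain $n$, and one can hope to ``thicken'' their walls into extra dimensions in the spirit of the DGK argument. Second, I would consider the Johnson--Millson bending construction: given a cocompact arithmetic lattice $\Gamma \subset \OO(n, 1)$ of simplest type and an immersed totally geodesic codimension-one submanifold $\Sigma \subset \Hyp^n / \Gamma$, one obtains a one-parameter family of convex cocompact representations $\rho_t \colon \Gamma \to \OO(n+1, 1)$ with $\rho_0 = J$, and by Fact~\ref{fact:GK3}~(1) the goal reduces to showing $C_\textup{Car}(J, \rho_t) < 1$ for small $t \neq 0$. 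The obvious obstacle is that bending is isometric on the stabiliser of $\Sigma$, so the translation lengths of those elements are not decreased; to circumvent this I would iterate the construction along several hypersurfaces $\Sigma_1, \dots, \Sigma_k$ whose normal directions are sufficiently ``generic'', landing in $\OO(n+k, 1)$, and hope that the combined deformation strictly contracts every conjugacy class of $\Gamma$.

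Third, one could attempt a purely cohomological approach. Since $C_\textup{Lip}(J, J) = 1$ by Lemma~\ref{lem:GK5}, the function $\rho \mapsto C_\textup{Car}(J, \rho)$ on $\Hom(\Gamma, \OO(N, 1))$ attains the value $1$ at $J$, and the conjecture amounts to showing that this function is not locally minimised there (equivalently, that some tangent direction at $J$ makes $C_\textup{Lip}(J, \cdot)$ strictly decrease). This in turn would require a first-order analysis of the Cartan projection of a one-parameter family of representations, a topic intimately connected to Thurston's asymmetric metric on convex cocompact deformation spaces. The hard part throughout is that strict domination is a genuinely \emph{global} property: it demands a uniform contraction estimate across all infinitely many conjugacy classes of $\Gamma$ at once, so purely local deformation-theoretic information at $J$ appears insufficient, and I expect that a complete proof will ultimately require a new dynamical or cohomological criterion that detects strict domination directly.
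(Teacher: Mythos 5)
The statement you were asked to prove is Conjecture~\ref{conj:strict-domination}, which the paper leaves \emph{open}: it is established only for $n \in \{2,3,4\}$ (via Fact~\ref{fact:DGK1}, Lemma~\ref{lem:DGK2} and Lemma~\ref{lem:DGK3}), and the paper explicitly records that the Potyagailo--Vinberg non-existence of cocompact right-angled reflection groups in $\Hyp^n$ for $n \geqslant 5$ is exactly why the remaining cases are unresolved. So there is no proof in the paper to compare yours against, and your submission is not a proof either: it correctly diagnoses the obstruction and correctly reduces strict domination to the checkable condition $C_\textup{Lip}(J,\rho)<1$ (equivalently $C_\textup{Car}(J,\rho)<1$ by Fact~\ref{fact:GK3}~(1)), but none of your three strategies is carried to completion, as you yourself concede in your final sentence.

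To name the gaps concretely: (i) for the proposed adaptation to non-right-angled Coxeter or arithmetic reflection lattices, you supply no replacement for the orthogonality mechanism that yields the $\frac{\cosh s}{\cosh t}$-Lipschitz contraction in \cite{DGK20}; the relations $(r_ir_j)^{m_{ij}}=1$ with $m_{ij}>2$ obstruct precisely the independent displacement of walls into extra dimensions on which that mechanism rests. (ii) For bending, the difficulty is not merely that elements stabilising a bending hypersurface retain their translation lengths; you must produce a single constant $c<1$ and $M>0$ with $\|\mu_{\rho_t}(\gamma)\| \leqslant c\,\|\mu_J(\gamma)\| + M$ for \emph{every} $\gamma \in \Gamma$, and iterating bendings along several hypersurfaces yields no such uniform estimate --- bending is an isometric deformation into a larger hyperbolic space and is not length-decreasing on any evident class of elements, while first-order information at $J$ cannot control a supremum over infinitely many conjugacy classes. (iii) The ``cohomological'' approach is a restatement of the problem (non-minimality of $\rho \mapsto C_\textup{Lip}(J,\rho)$ at $J$), not an argument. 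As written, the proposal is a research programme; the conjecture remains open for $n \geqslant 5$.
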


It is known that, if $n \geqslant 5$, 
there does not exist a \emph{cocompact} right-angled reflection group in $\OO(n,1)$ 
(Potyagailo--Vinberg~\cite[\S 2]{PV05}), 
which is why Conjecture~\ref{conj:strict-domination} is open in these dimensions. 
It is, however, still possible to ask the following question: 

\begin{question}[{cf.\ Kapovich~\cite{Kap05}, Desgroseilliers--Haglund~\cite{DH13}}]
Which discrete group admits an embedding into a convex cocompact right-angled reflection group in $\OO(n,1)$, 
with $n$ arbitrarily large?
\end{question}

For instance, if we could find a cocompact lattice of $\OO(k,1)$ 
which embeds into a convex cocompact right-angled reflection group in $\OO(n,1)$ (for $n$ sufficiently large), 
then we would be able to conclude from Theorem~\ref{thm:right-angled} that (P-cocH$^k$) does not imply (P-ss). 

\begin{remark}
Many discrete groups are known to admit virtual embeddings into right-angled Coxeter groups. 
For instance, Bergeron--Haglund--Wise~\cite[Thm.~1.8]{BHW11} proved that 
every arithmetic lattice of $\OO(n,1)$ of simplest type virtually embeds into some right-angled Coxeter group. 
However, it seems difficult to control the signature of Tits--Vinberg representations 
of the target right-angled Coxeter group.
\end{remark}

\subsection{On proper actions of semisimple Lie groups}

As we mentioned in Remark~\ref{rmk:Okuda-symmetric}, Okuda proved that 
the condition $\ab^{-w_0} \not\subset W \ab_H$ implies (P-sl$_2$R) when $H$ is a symmetric subgroup of $G$ 
(and thus, the conditions (P-ss), (P-sl$_2$R), (P-surf), (P-free), and (P-nva) are all equivalent in this case). 
In particular, exotic phenomena like Theorem~\ref{thm:main-simplified} or Fact~\ref{fact:Okuda} 
never happens for such $G/H$. 
The first author, Jastrz\c{e}bski, Okuda, and Tralle proved in \cite[Prop.~4.1]{BJOT15} that 
the same holds whenever $\rank_\R H = 1$. 
It would be worthwhile to find other classes of $G/H$ having this property: 

\begin{problem}\label{prob:vna-equiv-to-sl2}
Find many more classes of homogeneous spaces of reductive type for which 
$\ab^{-w_0} \not\subset W \ab_H$ implies \textup{(P-sl$_2$R)}. 
\end{problem}

\begin{remark}\label{rmk:forthcoming}
In fact, we recently found two other classes of $G/H$ with the above property, 
which we would like to discuss in another paper: 
\begin{itemize}
\item $\rank_\R G \leqslant 3$. 
\item $H$ is locally a fixed-point subgroup of some inner automorphism of $G$. 
\end{itemize}
\end{remark}

Given that the condition (P-sl$_2$R) is now relatively well-understood, 
it is natural to search for homogeneous spaces that 
do admit a proper action of a closed subgroup locally isomorphic to $\SL(2,\R)$ 
but do not admit proper actions of `larger' reductive subgroups. 
In general, given two real semisimple Lie algebras $\g$ and $\lfrak$, 
it seems very difficult to classify all conjugacy classes of subalgebras of $\g$ isomorphic to $\lfrak$. 
However, when $\lfrak = \oo(3,1)$ or $\oo(2,2)$ (i.e.\ $\ssl(2,\C)$ or $\ssl(2,\R) \oplus \ssl(2,\R)$), 
we expect that the classification is, in principle, manageable: 
classify all subalgebras of $\g \otimes_\R \C$ isomorphic to $\ssl(2, \C) \oplus \ssl(2,\C)$, 
and then, check if each subalgebra in the list is `compatible' with the Satake diagram of $\g$ 
(in a similar sense as Okuda~\cite{Oku13}). 
We therefore ask the following: 

\begin{problem}\label{prob:method-o31}
Develop an efficient method to verify if a given homogeneous space of reductive type 
admits a proper action of reductive subgroup locally isomorphic to $\OO(3,1)$ (or equivalently, $\SL(2,\C)$), 
and classify all reductive symmetric spaces having this property. 
\end{problem}

Kazuki Kannaka told us that he accomplished the classification when $G/H$ is a classical symmetric space (private communication). 

\begin{question}\label{ques:symmetric-o31}
Does an analogue of Okuda's theorem~\cite{Oku13} hold for $\OO(3,1)$? 
More precisely, is there a reductive symmetric space $G/H$ with the following two properties? 
\begin{itemize}
\item $G/H$ is \textup{(P-cocH$^3$)}, i.e.\ 
there exists a discrete subgroup of $G$ which is isomorphic to a cocompact lattice of $\OO(3,1)$ and acts properly on $G/H$.
\item There does not exist a reductive subgroup which is locally isomorphic to $\OO(3,1)$ and acts properly on $G/H$.
\end{itemize}
\end{question}

It may be also interesting to study analogues of Problem~\ref{prob:method-o31} and Question~\ref{ques:symmetric-o31} for other semisimple Lie groups (e.g.\ $\OO(2,2)$ or $\OO(4,1)$). Okuda~\cite{Oku14} gave a necessary condition for a reductive symmetric space to admit proper actions of $\SL(3, \R)$ or $\SU(2, 1)$. 

\subsection{Other questions}

The discrete subgroup $\Gamma$ obtained by Theorem~\ref{thm:main-general} is contained in $L$. 
In particular, it is not Zariski dense in $G$ (unless $L = G$). 
It leads us to consider the following question: 

\begin{question}
In the setting of Theorem~\ref{thm:main-example}~(1), is there a Zariski dense discrete subgroup of $G$ 
which is isomorphic to a cocompact lattice of $\OO(n,1)$ and acts properly on $G/H$?
\end{question}

We have the following partial result, which unfortunately does not generalize to $(n, N) = (3,6), (4,8)$: 

\begin{proposition}
In the setting of Theorem~\ref{thm:main-example}~(1) with $(n, N) = (2, 3)$, there exists a Zariski-dense discrete surface subgroup of $G$ which acts properly on $G/H$. 
\end{proposition}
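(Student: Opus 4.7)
Using Theorem~\ref{thm:main-example}(1) together with Fact~\ref{fact:DGK1}, for every $k \geqslant 3$ one obtains a discrete surface subgroup of $G$ acting properly on $G/H$ coming from a finite-index torsion-free subgroup of the reflection group of a right-angled $2k$-gon; the genus of this surface group grows with $k$. Fix $k$ sufficiently large (to be determined by the dimension count below), let $\Gamma$ be the corresponding cocompact surface lattice of $\OO(2,1)$, and write $\iota \colon \Gamma \hookrightarrow G$ for the associated faithful discrete representation, whose image lies in the reductive subgroup $L$ locally isomorphic to $\OO(2,1) \times \OO(3,1)$. By Remark~\ref{rmk:sharp}, $\iota(\Gamma)$ is sharply embedded in $G$ with respect to $H$ in the sense of Kassel--Tholozan~\cite{KT24+}. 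Because sharpness is an open property in $\Hom(\Gamma, G)$, there exists a neighbourhood $U \subset \Hom(\Gamma, G)$ of $\iota$ such that every $\varphi \in U$ is an injective homomorphism with discrete image and $\varphi(\Gamma)$ acts properly discontinuously on $G/H$.

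\textbf{Main argument.} The plan is to locate $\varphi \in U$ with $\varphi(\Gamma)$ Zariski-dense in $G$. The real algebraic variety $\Hom(\Gamma, G)$ has local dimension at $\iota$ of order $(2g-2) \dim G$, where $g$ is the genus of $\Gamma$. For any proper algebraic subgroup $M \subsetneq G$, the subset of $\Hom(\Gamma, G)$ consisting of representations conjugate into $M$ has dimension at most $(2g-2) \dim M + \dim G$. Choosing $g$ large enough that $(2g-2)(\dim G - \dim M) > \dim G$ for every one of the finitely many conjugacy classes of maximal proper algebraic subgroups $M$ of $G$, this is strictly smaller than $\dim \Hom(\Gamma, G)$. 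Since $G$ has only countably many conjugacy classes of proper algebraic subgroups, the non-Zariski-dense representations form a countable union of proper real analytic subvarieties of $\Hom(\Gamma, G)$. By the Baire category theorem, their complement is dense, so $U$ contains a Zariski-dense representation $\varphi$; then $\varphi(\Gamma) \subset G$ is the required Zariski-dense discrete surface subgroup acting properly on $G/H$.

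\textbf{Explicit alternative and main obstacle.} A more constructive way to produce such $\varphi$ is to bend $\iota$ along a simple closed geodesic on $\Gamma \backslash \Hyp^2$: pick $\gamma_0 \in \Gamma$ represented by such a geodesic with $Z_G(\iota(\gamma_0)) \not\subset L$ (readily arranged by a dimension count on centralisers, using $L \subsetneq G$), choose a one-parameter subgroup $(c_s)$ of $Z_G(\iota(\gamma_0))$ with $c_s \notin L$ for $s \neq 0$, and deform $\iota$ accordingly; for all but countably many values of $s$, the resulting representation is Zariski-dense in $G$ and still lies in $U$. The key obstacle is that this flexibility is specific to the surface case, where $\Hom(\Gamma, G)$ is of large dimension and admits the bending construction. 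For $(n, N) = (3, 6)$ or $(4, 8)$, cocompact lattices of $\OO(n,1)$ are substantially more rigid in higher-rank ambient groups (various forms of Mostow--Margulis rigidity), so the analogous neighbourhood $U$ may consist entirely of representations whose image remains conjugate into $L$, and the argument does not generalise. This is precisely why the proposition is stated only for $(n, N) = (2, 3)$.
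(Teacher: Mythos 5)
Your overall skeleton coincides with the paper's: take the standard properly discontinuous surface subgroup inside $L$, note via Remark~\ref{rmk:sharp} that it is sharply embedded with respect to $H$, use openness of sharpness to conclude that every sufficiently small deformation still gives a faithful, discrete, properly discontinuous action, and then find a Zariski-dense representation in that neighbourhood. Be aware that the openness of sharpness is not a formality you can wave at: it is exactly Kassel--Tholozan~\cite[Cor.~6.3~(2)]{KT24+}, which is the citation the paper leans on for this step.

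Where you genuinely diverge is the Zariski-density step, and this is where the gap lies. The paper passes to a finite-index surface subgroup of genus at least $2(\dim G)^2$ and invokes Kim--Pansu~\cite[Thm.~1]{KP15}, which directly supplies Zariski-dense representations arbitrarily close to $j$. You replace this citation with a dimension count plus Baire category, and as written it does not close. First, the bound ``representations conjugate into $M$ form a set of dimension at most $(2g-2)\dim M + \dim G$'' is wrong: the local dimension of $\Hom(\Gamma, M)$ is bounded by $\dim Z^1(\Gamma,\mathfrak{m})$, which the Euler-characteristic computation only controls by $(2g-1)\dim M + \dim H^2(\Gamma,\mathfrak{m}) \leqslant 2g\dim M$; the comparison with $\dim_\iota \Hom(\Gamma,G) \geqslant (2g-1)\dim G$ still works for large $g$, but your stated inequality is not the one you need. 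Second, and more seriously, the Baire argument requires (a) that the maximal proper algebraic subgroups of $G$ fall into at most countably many conjugacy classes, and (b) that each locus of representations conjugate into such an $M$ is contained in a closed nowhere dense subset of every irreducible component of $\Hom(\Gamma,G)$ through $\iota$ --- the image of $G \times \Hom(\Gamma,M) \to \Hom(\Gamma,G)$ need not be closed, and $\Hom(\Gamma,G)$ need not be irreducible or equidimensional at $\iota$. Both points are true but require proof; they are precisely the technical content packaged in the Kim--Pansu theorem. Your bending alternative suffers from the analogous gap: ``for all but countably many $s$ the image is Zariski-dense'' is asserted, not proved, and bending along a single curve can leave the image inside a proper subgroup containing $L$ and the bending one-parameter group. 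Your closing explanation of why the statement is restricted to $(n,N)=(2,3)$ is the right intuition and matches the paper's.
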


\begin{proof}
Take a discrete subgroup $\Gamma$ of $G$ which is isomorphic to a cocompact lattice of $\OO(2,1)$ and acts properly on $G/H$. We denote by $j \colon \Gamma \hookrightarrow G$ the embedding. 
Passing to finite-index subgroups if necessary, we may assume that $\Gamma$ is a surface group of genus $\geqslant 2 (\dim G)^2$. Then, there is a Zariski-dense representation of $\Gamma$ into $G$ which is arbitrarily close to $j$ by Kim--Pansu~\cite[Thm.~1]{KP15}. On the other hand, by Remark~\ref{rmk:sharp} and Kassel--Tholozan~\cite[Cor.~6.3~(2)]{KT24+}, any small deformation of $j$ is a sharp embedding into $G$ with respect to $H$. In particular, it acts properly on $G/H$. 
\end{proof}

Recall that the main motivation of this study is the relation between 
the existence of a `large' discontinuous group $\Gamma$ for $G/H$ and 
the existence of a `large' reductive subgroup $L$ acting properly on $G/H$. 
One standard way to measure the `largeness' of $\Gamma$ 
is its \textit{virtual cohomological dimension}, denoted as $\vcd(\Gamma)$. 
Recall the following well-known result: 

\begin{fact}[{Kobayashi~\cite[Cor.~5.5]{Kob89}}]
Let $G/H$ be a homogeneous space of reductive type, 
and let $\Gamma$ be a virtually torsion-free discrete subgroup of $G$ acting properly on $G/H$. 
Then, we have 
\[
\vcd(\Gamma) \leqslant \dim(G/K) - \dim(H/(K \cap H)),
\]
with equality if and only if $\Gamma$ acts cocompactly on $G/H$. 
\end{fact}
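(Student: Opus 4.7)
The plan is to pass to a torsion-free finite-index subgroup $\Gamma'$ of $\Gamma$ via Selberg's lemma (so that $\vcd(\Gamma) = \mathrm{cd}(\Gamma')$), and then analyze the smooth manifold $X := \Gamma' \backslash G/H$ using the Mostow fibration. The key geometric input is that $G/H$ is $G$-equivariantly diffeomorphic to the total space of a smooth Euclidean vector bundle of rank $d := \dim(G/K) - \dim(H/(K \cap H))$ over the compact manifold $K/(K \cap H)$, with fibre the $B$-orthogonal complement of $\p \cap \h$ in $\p$. In particular, $G/H$ is homotopy equivalent to $K/(K \cap H)$, which has dimension $\dim(G/H) - d$.

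For the upper bound $\mathrm{cd}(\Gamma') \leqslant d$, I would construct a $\Gamma'$-invariant smooth proper function on $G/H$ built from the squared fibre norm of the Mostow bundle. Properness of the $\Gamma'$-action (Fact~\ref{fact:PROP}) forces the critical set to project to a $\Gamma'$-cocompact locus in $G/H$, while the bundle structure forces the critical indices to be bounded by $d$. Morse theory would then produce a $\Gamma'$-equivariant CW model of $G/H$ whose quotient is a $K(\Gamma', 1)$ of dimension $\leqslant d$, with strict inequality whenever the $\Gamma'$-action fails to be cocompact.

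For the equality in the cocompact case, $X$ is a closed manifold of dimension $n := \dim(G/H)$ (orientable after passing to an index-$2$ subgroup if needed), whose universal cover is (up to homotopy) the closed manifold $K/(K \cap H)$ of dimension $n - d$. The Serre spectral sequence for the classifying fibration $G/H \to X \to B\Gamma'$, together with the non-vanishing of $H^n(X; \Z/2)$ and the vanishing of $H^q(K/(K \cap H))$ for $q > n - d$, forces $\mathrm{cd}(\Gamma') \geqslant d$, yielding equality.

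The main obstacle is the Morse-theoretic upper bound: one must ensure that the critical-set and index control genuinely exploit the properness of the $\Gamma'$-action on $G/H$ itself (not merely on the symmetric space $G/K$), since otherwise one recovers only the naive estimate $\mathrm{cd}(\Gamma') \leqslant \dim(G/K)$. A cleaner route may be to construct directly a $\Gamma'$-invariant proper smooth deformation retract of $G/H$ onto a subcomplex of dimension $d$, using the bundle structure and the properness criterion in tandem, thereby bypassing explicit Morse theory.
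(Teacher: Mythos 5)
This statement is quoted in the paper as a Fact with a citation to Kobayashi~\cite[Cor.~5.5]{Kob89}; the paper contains no proof of it, so I am comparing your proposal with the standard argument from that reference.

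Your treatment of the cocompact case (the Serre spectral sequence of $G/H \to \Gamma'\backslash G/H \to B\Gamma'$, the non-vanishing of the top $\Z/2$-class of the closed quotient manifold, and the vanishing of $H^q(K/(K\cap H))$ above degree $\dim(G/H)-d$) is essentially correct and gives $\vcd(\Gamma)\geqslant d$. The genuine gap is in the upper bound, and it is twofold. First, the Mostow decomposition $G/H \cong K\times_{K\cap H}(\p\cap\h^{\perp})$ is only $K$-equivariant, not $G$-equivariant: the vector-bundle structure, and with it the squared fibre norm, is not preserved by $\Gamma'$, so the proposed Morse function is not $\Gamma'$-invariant and the plan does not start. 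Second, even granting a $\Gamma'$-invariant proper function adapted to the bundle, the desired conclusion cannot come out of it: $\Gamma'\backslash G/H$ is not aspherical (since $G/H\simeq K/(K\cap H)$ is not contractible unless $K\subset H$), so no CW model of this quotient can be a $K(\Gamma',1)$; moreover the natural retraction of the Mostow bundle collapses $G/H$ onto its base $K/(K\cap H)$, of dimension $\dim(G/H)-d$, not $d$. The number $d$ is the \emph{fibre} dimension, and the fibre-norm function has its critical locus along the zero section with fibrewise index $0$, so Morse theory on $G/H$ only reproves the homotopy equivalence $G/H\simeq K/(K\cap H)$ and says nothing about $\mathrm{cd}(\Gamma')$.

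The missing idea is that the upper bound, and the strictness in the non-cocompact case, come from running your own spectral sequence in the opposite corner. Put $p_0=\mathrm{cd}(\Gamma')$, $f=\dim K/(K\cap H)$, $n=\dim G/H=f+d$. In the Serre spectral sequence of $G/H\to\Gamma'\backslash G/H\to B\Gamma'$ with coefficients in a free $\Z\Gamma'$-module witnessing $p_0$, the entry $E_2^{p_0,f}=H^{p_0}\bigl(\Gamma';H^f(K/(K\cap H);\,\cdot\,)\bigr)$ receives no differentials (their sources have $q>f$) and supports none (their targets have $p>p_0$), hence survives as a non-zero subquotient of $H^{p_0+f}$ of the $n$-manifold $\Gamma'\backslash G/H$. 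Since an $n$-manifold has no cohomology above degree $n$, and none in degree $n$ with any local coefficients when it is non-compact, this forces $p_0\leqslant n-f=d$, with strict inequality unless $\Gamma'\backslash G/H$ is compact. (One must verify $E_2^{p_0,f}\neq 0$, which takes a little care with the orientation local system of the fibre, e.g.\ after passing to a further finite-index subgroup.) This is essentially Kobayashi's argument; as you yourself suspected, the Morse-theoretic route cannot be repaired without importing exactly this cohomological input.
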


We ask the following variant of Kobayashi's standard quotient conjecture: 

\begin{question}
Assuming that a homogeneous space $G/H$ of reductive type is not \textup{(P-ss)}, 
can we say anything about the virtual cohomological dimension of a discontiuous group for $G/H$? 
Is it even possible for such $G/H$ to admit a cocompact discontinuous group, 
besides trivial cases (e.g.\ $G/H$ is compact or $H$ contains the semisimple part of $G$)?
\end{question}

Of course, if a non-compact homogeneous space $G/H$ of reductive type is not (P-ss) 
but admits a cocompact discontinuous group, 
then it is (an extremely strong instance of) a counterexample to Kobayashi's conjecture. 

In this paper, 
we used convex cocompact representations of discrete groups into $\OO(n,1)$ to construct exotic proper actions. 
It is natural to study if analogous classes of representations produce further examples: 

\begin{problem}
Construct new examples of exotic proper actions on homogeneous spaces of reductive type via 
one of the following classes of representations:
\begin{enumerate}[label = {\upshape (\roman*)}]
\item Convex cocompact representations into other simple Lie groups of real rank $1$. 
\item Tits--Vinberg representations of general right-angled Coxeter groups. 
In particular, the one-parameter family considered in Danciger--Gu\'eriaud--Kassel~\cite[\S 6]{DGK20}. 
\item Anosov representations (Labourie~\cite{Lab06}). 
In particular, surface group representations which belong to higher Teichm\"uller components, 
such as Hitchin representations. 
\end{enumerate}
\end{problem}

\begin{remark}\label{rmk:failed}
We record our failed attempt to construct 
a proper surface group action on Okuda's homogeneous space (Fact~\ref{fact:Okuda}) 
using Hitchin representations into $\SL(3,\R)$. 
Let $\Gamma$ be a surface group of genus $\geqslant 2$, and consider a discrete embedding 
\[
\Gamma \xrightarrow{(j, \rho)} \SL(2,\R) \times \SL(3,\R) \hookrightarrow \SL(5,\R),
\]
where $j$ is a discrete embedding and $\rho$ is a Hitchin representation. 
Our idea was to use Tholozan's inequality~\cite[Th.~3.9]{Tho17} and 
the fact that Hitchin representations avoid the walls of the Weyl chamber 
(Kapovich--Leeb--Porti~\cite[Th.~1.8]{KLP14+}, Gu\'eritaud--Guichard--Kassel--Wienhard~\cite[Th.~1.3]{GGKW17}). 
Unfortunately, they are not enough to ensure the existence of a proper surface group action on Okuda's homogeneous space.
\end{remark}

\begin{remark}
We here point out a minor mistake in \cite[Th.~3.9]{Tho17}. 
The original statement of the theorem is not correct as written, whilst the following slight modification is true: 

\begin{theorem-no-number}
Assume that $\Phi$ is not identically zero. 
For each $t > 0$, let $\rho_t \colon \Gamma \to \PSL(3,\R)$ be the Hitchin representation associated with $(J, t\Phi)$, 
and let $j \colon \Gamma \to \PSL(2,\R)$ be the Fuchsian representation uniformizing $(S, J)$. 
Then, there exist $A, B > 0$ such that, for a sufficiently large $t > 0$, 
\[
A t^{1/3} L_j \leqslant L_{\rho_t} \leqslant Bt^{1/3}L_j.
\]
\end{theorem-no-number}

The issue in the original proof of the upper-bound estimate is that the PDE in the last
paragraph of p.1399 is wrong: the correct PDE is
\[
\Delta \log(\sigma_t) = -1+\sigma_t^2 - 2t^3\sigma_t^{-4} |\Phi|^2
\]
(see e.g.\ Wang~\cite[Prop.~3.3]{Wan91} or Lofrin~\cite[Prop.~3.6.1]{Lof01}). 
Fortunately, since the leading term of the PDE is unchanged, one can prove the above modified estimate as follows. 
Arguing as in the first paragraph of p.1400, one obtains 
\[
\sigma_t(x_m)^6(1-\sigma_t(x_m)^{-2}) \leqslant 2 t^2 \| \Phi \|_\infty^2.
\]
Assume that $t \geqslant 2 \| \Phi \|_\infty^{-1}$ (recall that $\Phi$ is not identically zero). 
It follows from the estimate $\sigma_t \geqslant t^{1/3}|\Phi|^{1/3}$ in the third paragraph of p.1400 that
\[
\sigma_t(x_m)^6(1-\sigma_t(x_m)^{-2}) \geqslant (1-2^{-2/3})\sigma_t(x_m)^6.
\]
Thus, we have 
$\sigma_t \leqslant 2^{1/6}(1-2^{-2/3})^{1/6} \| \Phi \|_\infty^{1/3}$
on $S$, and therefore, 
\[
L^B_{\rho_t} \leqslant 2^{1/6} (1-2^{-2/3})^{1/6} \| \Phi \|^{1/3}_\infty L^P.
\]

The original proof of the lower-bound estimate implicitly assumes that $\Phi$ is not identically zero, too 
(see the fourth paragraph of p.1400). 
\end{remark}

\subsection*{Acknowledgements}

We would like to thank 
Kazuki Kannaka, Fanny Kassel, Toshiyuki Kobayashi, Takayuki Okuda, and Nicolas Tholozan 
for stimulating discussions on the topic of this paper.
We would like to thank Kazuki Kannaka also for telling us his unpublished result. 

This work started during the 
\emph{7th Tunisian--Japanese Conference 
``Geometric and Harmonic Analysis on Homogeneous Spaces and Applications'' in Honor of Professor Toshiyuki Kobayashi} 
in Monastir, Tunisia, 
and the two authors decided to write a joint paper during the ICTS program 
\emph{Zariski dense subgroups, number theory and geometric applications} (ICTS/ZDSG2024/01) 
in Bengaluru, India. 
We are grateful for the organizers of these conferences. 

YM was supported by JSPS KAKENHI Grant Numbers 19K14529 and 24K16922.

\noindent
Maciej Boche\'nski

\noindent
Faculty of Mathematics and Computer Science, 
University of Warmia and Mazury,
S{\l}oneczna 54, 10-710 Olsztyn, Poland

\noindent
E-mail address: \href{mabo@matman.uwm.edu.pl}{mabo@matman.uwm.edu.pl}

\bigskip

\noindent
Yosuke Morita

\noindent
Faculty of Mathematics, Kyushu University, 
744 Motooka, Nishi-ku, Fukuoka 819-0395, Japan

\noindent
E-mail address: \href{y-morita@math.kyushu-u.ac.jp}{y-morita@math.kyushu-u.ac.jp}

\end{document}